\renewcommand{\labelenumi}{\theenumi.}
\renewcommand{\div}{\operatorname{div}}
\newcommand{\Rr}{{\mathbb{R}}}
\newcommand{\Nn}{{\mathbb{N}}}
\newcommand{\Tt}{{\mathbb{T}}}
\newcommand{\Li}{L^{\infty}}
\newcommand{\Lip}{{\rm Lip\,}}
\newcommand{\T}{\mathbb{T}}
\newcommand{\epsi}{\varepsilon}
\def\leq{\leqslant}
\def\geq{\geqslant}
\numberwithin{equation}{section}
\newtheoremstyle{thmlemcorr}{10pt}{10pt}{\itshape}{}{\bfseries}{.}{10pt}{{\thmname{#1}\thmnumber{
#2}\thmnote{ (#3)}}}
\newtheoremstyle{thmlemcorr*}{10pt}{10pt}{\itshape}{}{\bfseries}{.}\newline{{\thmname{#1}\thmnumber{
\newtheoremstyle{defi}{10pt}{10pt}{\itshape}{}{\bfseries}{.}{10pt}{{\thmname{#1}\thmnumber{
#2}\thmnote{ (#3)}}}
\newtheoremstyle{remexample}{10pt}{10pt}{}{}{\bfseries}{.}{10pt}{{\thmname{#1}\thmnumber{
#2}\thmnote{ (#3)}}}
\newtheoremstyle{ass}{10pt}{10pt}{}{}{\bfseries}{.}{10pt}{{\thmname{#1}\thmnumber{
A#2}\thmnote{ (#3)}}}
\theoremstyle{thmlemcorr}
\newtheorem{theorem}{Theorem}
\numberwithin{theorem}{section}
\newtheorem{lemma}[theorem]{Lemma}
\newtheorem{corollary}[theorem]{Corollary}
\newtheorem{proposition}[theorem]{Proposition}
\theoremstyle{thmlemcorr*}
\newtheorem{theorem*}{Theorem}
\newtheorem{lemma*}[theorem]{Lemma}
\newtheorem{corollary*}[theorem]{Corollary}
\newtheorem{proposition*}[theorem]{Proposition}
\newtheorem{problem*}[theorem]{Problem}
\newtheorem{conjecture*}[theorem]{Conjecture}
\theoremstyle{defi}
\newtheorem{definition}[theorem]{Definition}
\theoremstyle{remexample}
\newtheorem{remark}{Remark}
\newtheorem{example}{Example}
\newtheorem{pro}[theorem]{Proposition}
\theoremstyle{ass}
\begin{document}

\title[ Discount Mean Field Games]{On weak solutions to first-order discount mean field games}

\author{Hiroyoshi Mitake}
\address[H. Mitake]{
        Graduate School of Mathematical Sciences, The University of Tokyo, 3-8-1 Komaba, Meguro-ku, Tokyo, 153-8914, Japan.}
\email{mitake@ms.u-tokyo.ac.jp}

\author{Kengo Terai}
\address[K. Terai]{
        Graduate School of Mathematical Sciences, The University of Tokyo, 3-8-1 Komaba, Meguro-ku, Tokyo, 153-8914, Japan.}
\email{terai@ms.u-tokyo.ac.jp}

\thanks{
        H. M. was partially supported by the JSPS grants: KAKENHI \#19K03580, \#17KK0093, \#20H01816.  
        K. T. was supported by Grant-in-Aid for JSPS Fellows  \#20J10824. 
}
\keywords{Mean field games; ergodic problem; vanishing discount approximation.}
\subjclass[2010]{
        35A01, 
        91A13, 
        49L25} 
\date{\today}

\begin{abstract}

In this paper, we establish the existence and uniqueness of weak solutions 
to first-order discount mean field games and a stability result to give the existence for the ergodic problem. We show an example to illustrate the multiplicity of weak solutions to the ergodic problem. With this motivation, we address a selection condition, which is a necessary condition that any limit of solutions under subsequence satisfies. 
As an application, we show a nontrivial example to get the convergence of weak solutions.  
\end{abstract}

\maketitle
\section{introduction}
In this paper, we consider the stationary first-order discount mean field game systems with a local coupling
\begin{equation}\label{DP}
        \begin{cases}
               \:\: \epsi u^\epsi+H(x,Du^\epsi)=f(x,m^\epsi) &\quad\mathrm{in}\  \Tt^d, \\
               \:\: \epsi m^\epsi -\mathrm{div}(m^\epsi D_pH(x, Du^\epsi)) = \epsi &\quad \mathrm{in} \ \Tt^d, \\
        \end{cases}
\end{equation}
where $\Tt^d$ is the $d$-dimensional flat torus identified with $[0,1]^d$, and  $\epsi$ is a given positive number. The functions $H: \Tt^d \times \Rr^d  \to \Rr$
 and $f:\Tt^d \times [0,+\infty) \to \Rr$  are given continuous functions. 
Here, a pair of $u^\epsi: \Tt^d \to \Rr$ and $m^\epsi: \Tt^d \to [0,\infty)$ is unknown.

Mean field game (MFG) systems have been introduced simultaneously by Lasry and Lions \cite{ll1} and by Huang, Caines and Malham\'e \cite{HCM}, which describe the player's optimal strategies of the agents and their macroscopic distribution. 
These games are often determined by a system of a Hamilton-Jacobi equation coupled with a transport or Fokker-Planck equation. In this paper, we focus on deterministic games, and therefore we consider the first order Hamilton-Jacobi equation coupled with a transport equation. 
From a perspective of mathematical analysis, they are not monotone systems, and not uniformly elliptic. 
For first order MFG systems in which the coupling $f$ is of nonlocal nature, the existence and uniqueness of solutions are well-understood (see \cite{ll3}). 
On the other hand, in the case of local couplings, 
in general, we cannot expect the solvability in the classical sense, and therefore it is reasonable to study by introducing the notion of weak solutions. 
Recently the framework of weak solutions has been developed. 
See \cite{ACDPS,Car1,CG,FG2,Gra,P}.  

Our main interest of this paper is to establish the well-posedness of weak solutions to \eqref{DP}, and to study 
the asymptotic behavior of the solution $(u^\epsi,m^\epsi)$ as the discount factor $\epsi \to 0$. 
We call this asymptotic problem the \textit{vanishing discount problem}.
As an analogy of the study of the vanishing discount problem for Hamilton-Jacobi equations 
(see \cite{DFIZ, MT5, IMT}) and also for discount MFG systems (see \cite{GMT}) 
we can naturally expect that the limit problem of \eqref{DP}, which is called the \textit{ergodic problem}, 
is described by 
\begin{equation}\label{EP}
        \begin{cases}
               \:\:  H(x,Du)=f(x,m)+\lambda &\quad\text{in}\  \Tt^d, \\
                \:\: -\mathrm{div}(m D_pH(x, Du))= 0 &\quad \text{in} \ \Tt^d,
        \end{cases}
\end{equation}
where a triple of $u:\Tt^d \to \Rr$, $m:\Tt^d \to [0,\infty)$ and $\lambda\in \Rr$ is unknown. 
We call $\lambda$ an {\it ergodic constant}. 
Ergodic problem \eqref{EP} appears in many context of the asymptotic problem of MFG systems. 
We refer to \cite{Car0,CG} for the study of the long time average of solutions.

The discount problem naturally arises in
optimal control theory and differential
game theory, where $\epsi$  is a discount factor. 
In recent years, there has been much interest and progress on the vanishing discount problem for Hamilton-Jacobi equations. 
Ergodic problem for Hamilton-Jacobi equations is given by 
\begin{equation}\label{EE}
H(x,Dv)=c \quad\mbox{in}\  \Tt^d,
\end{equation}
where a pair of $v: \Tt^d \to \Rr$ and $c \in \Rr$ is unknown. 
One of standard ways to establish the existence of viscosity solutions to \eqref{EE} is to 
consider the solution $v^\epsi \in \mathrm{Lip}(\Tt^d)$ to the discount problem 
\begin{equation*}\label{DD}
\epsi v^\epsi+H(x,Dv^\epsi)=0 \quad\mbox{in}\  \Tt^d, 
\end{equation*}
and pass to the limit. 
Under the coercivity assumption on Hamiltonians, 
we can easily get a priori estimate on $\|Dv^\epsi\|_{\Li(\T^d)}$, and by 
the Arzel\'a-Ascoli theorem, 
we can prove that there exists a subsequence $\{\epsi_j\}_{j\in\mathbb{N}}$ with $\epsi_j\to0$ as $j\to\infty$ such that for some $(v,c)\in\Lip(\T^d)\times\mathbb{R}$, 
\[
\epsi_j v^{\epsi_j}\to -c, \quad
v^{\epsi_j} -\min_{\Tt^d} v^{\epsi_j}\to v \quad\text{in} \ C(\Tt^d) \quad \text{as} \ j\to\infty. 
\]
Here, it is worth emphasizing that due to the lack of uniqueness of viscosity solutions to \eqref{EE}, it is nontrivial whether the whole convergence of $v^\epsi -\min_{\Tt^d} v^\epsi $ holds. 
Recently, 
the convergence to a unique limit and its characterization has been established 
by \cite{DFIZ, MT5, IMT} using weak KAM theory,  the nonlinear adjoint method, and 
the duality method, respectively. 
We also refer \cite{LMT, Hung-book} and the references therein for further development.

For second-order MFG systems, that is, the systems are uniformly elliptic, the vanishing discount problem is studied in \cite{CP,Mas}. 
In first-order MFG systems, in \cite{GMT}, the authors study the existence of classical solutions under the specific Hamiltonian $H(x,p)=\frac{1}{2}|p|^2+V(x)$ with a small oscillatory potential. 
In this setting, since the ergodic problem has the uniqueness up to constants, it is rather easily proved that the uniform convergence holds. 
We also point out that in the argument in \cite{GMT}, the specific form of Hamiltonian is crucial. 
However, if one considers weak solutions, then the multiplicity of weak solutions to \eqref{EP} rather naturally appears. 
In \cite[Section 2.2]{GMT}, the authors consider a weak solution which is introduced in \cite{FG2}, and show an example to illustrate the non-uniqueness issue (see also \cite{GNP}). 
This multiplicity of weak solutions makes the asymptotic problem harder and more interesting.

\medskip
Main feature of this paper, compared particularly to \cite{GMT}, is to study the vanishing discount problem with a different framework of weak solutions which are introduced in \cite{Car1,CG,Gra}. 
In \cite{FG2}, they construct weak solutions based on variational inequality technics and Minty's method, and 
on the other hand, in \cite{Car1,CG,Gra}, they introduce weak solutions by using variational structures and the Fenchel type of duality theorem. For a comparison between these two notions of weak solutions, we
refer the readers to \cite{FG2}.

Main contribution of this paper is firstly to obtain the unique weak solution in the sense of \cite{Car1,CG,Gra} to discount MFG system \eqref{DP}. 
Next, we get a stability result for $(u^\epsi,m^\epsi)$ as $\epsi \to 0$, 
which is a new way to prove the existence of weak solutions to  ergodic problem \eqref{EP}. 
Moreover, we prove that the limit function satisfies a viscosity supersolution property, which will be clearly explained in Section \ref{sta}. 
We also show an example to illustrate the multiplicity of weak solutions to the ergodic problem. With this motivation, we address a selection condition, which shows a necessary condition that any limit of solutions under subsequence satisfies. 
By using this condition, we show a nontrivial example to get the convergence result. 
In connection with the non-uniqueness issue on weak solutions to ergodic problem \eqref{EP},  
we give several uniqueness set results. 
We explain the main results in the next sections in more details.

\subsection {Assumptions}
Throughout the paper we assume the following conditions:
\begin{enumerate}[label=(H\arabic*)]

\item The coupling term $f: \Tt^d \times [0,\infty) \to \Rr$ is continuous in both variables, strictly increasing with respect to the second variable, and there exists $q>1$ and $C>0$ such that 
\begin{equation*}
\frac{1}{C}|m|^{q-1}-C\leq f(x,m)\leq C|m|^{q-1}+C \quad \mbox{ for all } (x,m)\in \Tt^d\times [0,\infty).
\end{equation*}

By replacing $H$, $f$ by $\tilde{H}(x,p):=H(x,p)-\max_{\Tt^d}f(\cdot,0)$, $\tilde{f}(x,m):=f(x,m)-\max_{\Tt^d}f(\cdot,0)$ if necessary, we can always assume $f(x,0)\le 0$ for all $x\in\Tt^d$ without loss of generality. 


\item The Hamiltonian $H: \Tt^d \times \Rr^d \to \Rr$ is continuous in both variables,  
strictly convex and differentiable on the second variable, with $D_pH$ continuous in both variables.
Moreover, there exists $r>1$ and $C>0$ such that
\begin{equation*}
\frac{1}{C}|p|^{r}-C\leq H(x,p)\leq C|p|^{r}+C \quad \mbox{ for all } (x,p)\in \Tt^d\times \Rr^d.
\end{equation*}

\end{enumerate}

\medskip

For later purposes, we define $F: \Tt^d \times \Rr \to \Rr \cup \{+\infty\}$ so that $F(x,\cdot)$ is a primitive of $f(x,\cdot)$ on $(0,+\infty)$, that is,
\begin{align*}
F(x,m):=
\begin{cases}
\int_0^m f(x,s)\mbox{ }ds&\quad \mbox{if }m\geq0\\
+\infty &\quad  \mbox{if }m<0.
\end{cases}
\end{align*}
It follows that $F$ is continuous on $\Tt^d \times (0,\infty)$, differentiable and strictly convex in $m$ and satisfies,  
for some $C>0$, 
\[ 
\frac{1}{C}|m|^{q}-C\leq F(x,m)\leq C|m|^{q}+C \quad \mbox{ for all } (x,m)\in \Tt^d\times [0,\infty).
\]
 Let $F^*$ be the convex conjugate of $F$ with respect to the second variable, i.e.,
\[F^*(x,a)=\sup_{m\in \Rr} \{am-F(x,m)\}=\sup_{m\geq0} \{am-F(x,m)\}.\]
Then, $F^*$ satisfies that, for some $C>0$,  
\[
\frac{1}{C}|a|^{p}-C\leq F^*(x,a)\leq C|a|^{p}+C \quad \mbox{ for all } (x,a)\in \Tt^d\times [0,\infty), 
\]
where $p$ is the conjugate exponent of $q$, that is, $\frac{1}{p}+\frac{1}{q}=1$.

We denote by $H^*$ the convex conjugate of $H$ with respect to the second variable. 
Note that  $H^*$ satisfies 
\begin{equation}\label{Hstar}
\frac{1}{C}|p|^{r'}-C\leq H^*(x,p)\leq C|p|^{r'}+C \quad \mbox{ for all } (x,p)\in \Tt^d\times \Rr^d,
\end{equation}
where $r'$ is the conjugate exponent of $r$. 
Moreover, $H^*(x,\cdot)$ is strictly convex, because $H(x,\cdot)$ is convex, superlinear, and in $C^1(\Rr^d)$ (see \cite[Theorem A.2.4, p. 283]{Cannarsa} for instance) for each $x\in\Tt^d$.

\medskip
We give the definition of weak solutions to \eqref{DP} following to the works in \cite{Car1,Gra,CG}. 

\begin{definition}
  We call a pair of $(u^\epsi,m^\epsi) \in W^{1,pr}(\Tt^d) \times L^q(\Tt^d)$  a weak solution to \eqref{DP} if 
\begin{enumerate}
\renewcommand{\labelenumi}{(\roman{enumi})}

\item[{\rm(i)}] $m^\epsi \geq 0$ a.e. in $\Tt^d$, $\int_{\Tt^d} m^\epsi \mbox{ }dx=1$ and $m^\epsi D_pH(\cdot,Du^\epsi) \in L^1(\Tt^d)$, 

\item[{\rm(ii)}] the first equation of \eqref{DP} holds in the following sense{\rm:} 
\begin{equation}\label{pointwise}
\epsi u^\epsi+H(x,Du^\epsi)=f(x,m^\epsi) \quad \mathrm{a.e.} \mbox{ } \mathrm{in}\  \{m^\epsi>0\}, 
\end{equation}
and
\begin{equation}\label{subsol}
\epsi u^\epsi+H(x,Du^\epsi)\leq f(x,m^\epsi) \quad \mathrm{a.e.} \mbox{ } \mathrm{in}\  \Tt^d,  
\end{equation}

\item[{\rm(iii)}] the second equation of \eqref{DP} holds
\begin{equation}\label{FP} 
\epsi m^\epsi -\mathrm{div}(m^\epsi D_pH(x, Du^\epsi)) = \epsi \quad \mathrm{in} \ \Tt^d 
\end{equation}
in the sense of distribution.
\end{enumerate}

\end{definition}

We notice here that, if $pr>d$,  then $u^\epsi \in C^{0,\gamma}(\Tt^d)$ with $\gamma=1-\frac{d}{pr}$, and $u^\epsi$ is differentiable almost everywhere.

\subsection{Main results}

\setcounter{theorem}{0}

Here, we present main results of  the paper.

\begin{theorem}[Well-posedness]\label{result1}
The discount mean field game system \eqref{DP} has the unique weak solution $(u^\epsi,m^\epsi) \in W^{1,pr}(\Tt^d) \times L^q(\Tt^d)$. Moreover, if $pr>d$, then it holds that 
\begin{equation}\label{supersol}
\epsi u^\epsi+H(x,Du^\epsi)\geq f(x,0) \quad\mathrm{in}\ \Tt^d \quad\mbox{in the sense of viscosity solutions}.
\end{equation}
\end{theorem}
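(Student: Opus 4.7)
My plan is to follow the variational/duality approach developed in \cite{Car1,CG,Gra}, recasting \eqref{DP} as the optimality system of two dual convex problems. Define the primal functional
\begin{equation*}
\mathcal{A}(u) := \int_{\Tt^d} F^*(x,\epsi u+H(x,Du))\,\dx - \epsi\int_{\Tt^d} u\,\dx
\end{equation*}
for $u\in W^{1,pr}(\Tt^d)$, and the dual functional
\begin{equation*}
\mathcal{B}(m,w) := \int_{\Tt^d}\Bigl[m\,H^*\bigl(x,-w/m\bigr)+F(x,m)\Bigr]\,\dx
\end{equation*}
over pairs $(m,w)$ with $m\ge 0$, $m\in L^q(\Tt^d)$, $w\in L^1(\Tt^d;\Rr^d)$, and $\epsi m-\div(w)=\epsi$ in $\mathcal{D}'(\Tt^d)$ (using the usual convention $mH^*(x,-w/m)=+\infty$ when $m=0\neq w$ and $=0$ when $m=w=0$). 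Note that integrating the constraint gives $\int m=1$ automatically. I would then show existence of minimizers by the direct method: the growth bounds $F^*(x,a)\ge |a|^p/C-C$ and $H(x,p)\ge|p|^r/C-C$ yield coercivity of $\mathcal{A}$ in $W^{1,pr}$, while $F(x,m)\ge|m|^q/C-C$ and $H^*(x,p)\ge|p|^{r'}/C-C$ give coercivity of $\mathcal{B}$; lower semicontinuity is standard since the integrands are convex in their variables.

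Next I would prove the Fenchel--Rockafellar identity $\inf \mathcal{A}+\min \mathcal{B}=0$ by introducing the Lagrangian $L(u;m)=\int[m(\epsi u+H(x,Du))-F(x,m)-\epsi u]\,\dx$, verifying the constraint qualification at a smooth admissible pair $(m\equiv1,w\equiv0)$, and swapping inf/sup. Any minimizing pair $(u^\epsi,(m^\epsi,w^\epsi))$ must saturate the two Fenchel inequalities,
\begin{equation*}
F^*(x,\epsi u^\epsi+H(x,Du^\epsi))+F(x,m^\epsi)=m^\epsi(\epsi u^\epsi+H(x,Du^\epsi)),
\end{equation*}
\begin{equation*}
m^\epsi H^*\bigl(x,-w^\epsi/m^\epsi\bigr)+m^\epsi H(x,Du^\epsi)=-w^\epsi\cdot Du^\epsi.
\end{equation*}
The first equality gives $m^\epsi=\partial_a F^*(x,\epsi u^\epsi+H(x,Du^\epsi))$; since $F(x,\cdot)\equiv 0$ on $(-\infty,0]$ and $f(x,0)\le 0$, we have $\partial_a F^*(x,\cdot)\equiv 0$ on $(-\infty,f(x,0)]$, which simultaneously produces \eqref{pointwise} on $\{m^\epsi>0\}$ and the subsolution bound \eqref{subsol} a.e.\ (using $f(x,0)\le f(x,m^\epsi)$). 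The second equality yields $w^\epsi=-m^\epsi D_pH(x,Du^\epsi)$, so the distributional constraint becomes \eqref{FP}, and a H\"older estimate using the growth of $D_pH$ gives $m^\epsi D_pH(\cdot,Du^\epsi)\in L^1$. Uniqueness is then the classical Lasry--Lions monotonicity argument: for two solutions $(u_1,m_1),(u_2,m_2)$, I would pair the difference of the Fokker--Planck equations with $u_1-u_2$ and integrate by parts using the difference of the first equations, obtaining $\int(m_1-m_2)(f(x,m_1)-f(x,m_2))\,\dx$ plus a nonnegative convexity defect; strict monotonicity of $f$ forces $m_1=m_2$, and then the first equation forces $u_1=u_2$.

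Finally, the viscosity supersolution property \eqref{supersol} is obtained by contradiction against the minimality of $u^\epsi$. Assume $\phi\in C^1$ is such that $u^\epsi-\phi$ has a local minimum at $x_0$ with $\epsi\phi(x_0)+H(x_0,D\phi(x_0))<f(x_0,0)$; after subtracting a small quadratic, we may suppose the minimum is strict and that $\epsi\phi+H(x,D\phi)\le f(x,0)-\eta$ on some $B_r(x_0)$, with $u^\epsi\ge\phi+\delta_0$ on $\partial B_r(x_0)$ for some $\delta_0>0$. For $0<\delta<\delta_0$ I would construct the competitor $\tilde u:=u^\epsi-\chi(u^\epsi-\phi-\delta)_+$, where $\chi\in C_c^\infty(B_r)$ equals $1$ on $B_{r/2}$; this is continuous, satisfies $\tilde u\le u^\epsi$ with strict inequality on a set of positive measure, and on the region where $\tilde u=\phi+\delta$ one has $\epsi\tilde u+H(x,D\tilde u)\le f(x,0)-\eta+\epsi\delta\le f(x,0)$, so the $F^*$-integrand vanishes there. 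Controlling the transition layer (where $\chi$ varies) by choosing $\delta$ and the cutoff appropriately, the gain from the strictly decreased $-\epsi\int u\,\dx$ term dominates, producing $\mathcal{A}(\tilde u)<\mathcal{A}(u^\epsi)$.

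The main obstacle I anticipate is the rigorous execution of the Fenchel duality and the passage from the pointwise Fenchel identities to the full distributional formulation of \eqref{FP} on all of $\Tt^d$ (not only where $m^\epsi>0$), together with the integrability $m^\epsi D_pH(\cdot,Du^\epsi)\in L^1$; this requires combining the Fenchel equalities with the growth assumptions and H\"older's inequality. A secondary technical point is the cut-off construction in the supersolution argument, which must be arranged so that the transition-layer contribution to $\mathcal{A}(\tilde u)-\mathcal{A}(u^\epsi)$ is dominated by the decrease of $-\epsi\int u\,\dx$.
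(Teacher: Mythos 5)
Your variational setup (the two convex problems, their duality, and the extraction of \eqref{pointwise}, \eqref{subsol}, \eqref{FP} from saturation of the Fenchel inequalities) is essentially the paper's route through Propositions \ref{Aopti}--\ref{DPchar}. The first genuine gap is in uniqueness. The Lasry--Lions monotonicity computation (or, as in the paper, strict convexity of $\mathcal{B}$ in Proposition \ref{minexist} plus strict convexity of $H$ in the functional $\mathcal{A}^\epsi$) yields $m_1^\epsi=m_2^\epsi$ and $Du_1^\epsi=Du_2^\epsi$ a.e.\ on $\{m^\epsi>0\}$, and then \eqref{pointwise} (thanks to the $\epsi u$ term) gives $u_1^\epsi=u_2^\epsi$ a.e.\ on $\{m^\epsi>0\}$. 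But your final claim that ``the first equation forces $u_1=u_2$'' fails on $\{m^\epsi=0\}$: there the first equation holds only as the inequality \eqref{subsol}, which cannot force equality --- this is exactly the mechanism that produces the multiplicity of $u$ for the ergodic problem in Section \ref{subsec:lack}, so the point cannot be waved away. The paper closes this gap with an additional variational argument: it shows that $\bar u^\epsi:=\max\{u_1^\epsi,u_2^\epsi\}$ is again a weak solution, hence a minimizer of \eqref{Aepsimin}; if $\mathcal{L}^d(\{u_1^\epsi\neq u_2^\epsi\})>0$, then since the $F^*$ terms of $\bar u^\epsi$ and $u_1^\epsi$ coincide (they agree on $\{m^\epsi>0\}$ and both vanish on $\{m^\epsi=0\}$ by \eqref{subsol} and \eqref{Fstar}), the strict inequality $-\epsi\int_{\Tt^d}\bar u^\epsi\,dx<-\epsi\int_{\Tt^d}u_1^\epsi\,dx$ gives $\mathcal{A}^\epsi(\bar u^\epsi)<\mathcal{A}^\epsi(u_1^\epsi)$, contradicting minimality. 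Some argument of this kind, on the set $\{m^\epsi=0\}$, is indispensable.

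The second gap is a sign error that breaks your proof of \eqref{supersol}. Your competitor $\tilde u=u^\epsi-\chi(u^\epsi-\phi-\delta)_+$ satisfies $\tilde u\le u^\epsi$, hence $-\epsi\int_{\Tt^d}\tilde u\,dx\ge-\epsi\int_{\Tt^d}u^\epsi\,dx$: lowering $u$ is \emph{penalized}, not rewarded, by the linear term of $\mathcal{A}^\epsi$, so there is no ``gain'' to dominate anything. Worse, the $F^*$-savings you invoke are typically zero precisely where the supersolution property could fail: there one expects $m^\epsi=0$, and by \eqref{subsol} and \eqref{Fstar} the integrand $F^*(x,\epsi u^\epsi+H(x,Du^\epsi))$ already vanishes on that set, so $\mathcal{A}^\epsi(\tilde u)\ge\mathcal{A}^\epsi(u^\epsi)$ and no contradiction appears. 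The correct move is to push the candidate \emph{up}, not down: the paper takes a minimizing sequence $\phi_n$, sets $\alpha_n=\epsi\phi_n+H(x,D\phi_n)$, and replaces $\phi_n$ by the viscosity solution $\bar\phi_n$ of $\epsi\bar\phi_n+H(x,D\bar\phi_n)=\max\{\alpha_n,f(x,0)\}$. By comparison $\bar\phi_n\ge\phi_n$, and since $F^*(x,\cdot)$ vanishes below $f(x,0)$ the $F^*$ term does not increase, so $\mathcal{A}^\epsi(\bar\phi_n)\le\mathcal{A}^\epsi(\phi_n)$ and $\bar\phi_n$ is still minimizing, while each $\bar\phi_n$ satisfies the supersolution inequality \eqref{barphi} by construction. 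For $pr>d$ the Rellich--Kondrachov embedding gives uniform convergence, stability of viscosity solutions passes the inequality to the limit minimizer, and the uniqueness established in the first part identifies that limit with $u^\epsi$. Note also that the viscosity formulation of \eqref{supersol} presupposes continuity of $u^\epsi$, which is where the hypothesis $pr>d$ enters and which your sketch never uses.
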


Next, we obtain a weak compactness of $(u^\epsi,m^\epsi)$ and a stability result. 
\begin{theorem}[Stability]\label{result2}
Assume either $q\ge d$, or $r'\leq \frac{qd}{d-q}$ if $q<d$. 
Let $(u^\epsi,m^\epsi) \in W^{1,pr}(\Tt^d) \times L^q(\Tt^d)$ be the weak solution to \eqref{DP}. There exists a subsequence $(u^{\epsi_n},m^{\epsi_n})$ such that 
\begin{align*}
&\langle u^{\epsi_n} \rangle \rightharpoonup u \quad\text{weakly  in} \ W^{1,pr}(\Tt^d), \\
&m^{\epsi_n}\rightharpoonup {m}\quad\text{weakly in} \ L^{q}(\Tt^d), \\
&\epsi_n \int u^{\epsi_n}\,dx \to -\lambda 
\quad \text{as} \ \epsi_n \to 0 
\end{align*}
for some $(u,m, \lambda)\in W^{1,pr}(\Tt^d)\times L^q (\Tt^d)\times \Rr$, which is a weak solution to \eqref{EP} defined by Definition {\rm\ref{def:weak-erg}}. Moreover, if $pr>d$,  we have
 \begin{equation}\label{EPsupersol}
 H(x,Du)\geq f(x,0)+ \lambda \quad\mathrm{in}\ \Tt^d\quad
 \mbox{in the sense of viscosity solutions}.
\end{equation}
Here, we set 
\[
 \langle f\rangle := f(x)-\int_{\Tt^d}f dx
 \]
 for any $f\in L^1(\Tt^d)$. 
\end{theorem}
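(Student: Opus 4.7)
The plan is to (i) derive uniform-in-$\epsi$ a priori bounds for $(u^\epsi,m^\epsi)$ by combining the two equations of \eqref{DP} into an energy identity, (ii) extract weakly convergent subsequences, and (iii) pass to the limit in the weak-solution relations, using the convex duality underlying the framework of \cite{Car1,CG,Gra} to handle the nonlinear passage. The viscosity supersolution property will then follow from standard stability of viscosity supersolutions under uniform convergence.

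\textbf{A priori estimates.} I would first test \eqref{FP} against $u^\epsi \in W^{1,pr}(\Tt^d)$, integrate by parts, and combine with the multiplication of \eqref{pointwise} by $m^\epsi$. Using the Legendre identity $H^*(x,D_pH(x,p))=p\cdot D_pH(x,p)-H(x,p)$, this yields the energy identity
\[
\int_{\Tt^d} m^\epsi H^*(x,D_pH(x,Du^\epsi))\,dx + \int_{\Tt^d} m^\epsi f(x,m^\epsi)\,dx = \epsi\int_{\Tt^d} u^\epsi\,dx.
\]
From $mf(x,m) \geq \tfrac{1}{C}|m|^q - C$ and $mH^* \geq -Cm$ one gets $c\|m^\epsi\|_{L^q}^q - C \leq \epsi\int u^\epsi\,dx$; on the other hand, integrating \eqref{subsol} over $\Tt^d$ and using $H \geq -C$ together with the growth of $f$ gives $\epsi\int u^\epsi\,dx \leq C + C\|m^\epsi\|_{L^q}^{q-1}$. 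Comparing these bounds yields
\[
\|m^\epsi\|_{L^q(\Tt^d)} + \Bigl|\epsi\int_{\Tt^d} u^\epsi\,dx\Bigr| \leq C.
\]
Finally, from the coercivity $H(x,p)\geq\tfrac{1}{C}|p|^r - C$ applied in \eqref{subsol}, one has $|Du^\epsi|^r \leq C(1+|m^\epsi|^{q-1}+|\epsi u^\epsi|)$; raising to the power $p$, integrating, and using Poincar\'e-Wirtinger on $\langle u^\epsi\rangle$ closes a bootstrap (since $p/r<1$) and yields $\|Du^\epsi\|_{L^{pr}} \leq C$.

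\textbf{Compactness and passage to the limit.} By Poincar\'e-Wirtinger, $\|\langle u^\epsi\rangle\|_{W^{1,pr}}\le C$, so Banach-Alaoglu delivers a subsequence along which $\langle u^{\epsi_n}\rangle\rightharpoonup u$ in $W^{1,pr}$, $m^{\epsi_n}\rightharpoonup m$ in $L^q$, and $\epsi_n\int u^{\epsi_n}\,dx\to-\lambda$. The exponent hypothesis ($q\geq d$, or $r'\leq qd/(d-q)$ if $q<d$) is exactly what makes the relevant Sobolev embedding place the limit pair $(u,m)$ in the admissible class with $m\,D_pH(x,Du)\in L^1$. To verify that $(u,m,\lambda)$ solves \eqref{EP} in the sense of Definition \ref{def:weak-erg}, I would pass the energy identity above to the $\liminf$ and use the variational characterization of $(u^\epsi,m^\epsi)$ as a primal-dual optimum pair inherited from \cite{Car1,CG,Gra}; together with the strict convexity of $F$ and $H^*$ in their second variables, this forces equality in the $\liminf$, which in turn upgrades weak convergence to strong convergence of $m^{\epsi_n}$ in $L^q$ and of $Du^{\epsi_n}$ in $L^{pr}$. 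Along a further subsequence one obtains a.e.\ convergence, and the nonlinearities $f(x,m^\epsi)$, $H(x,Du^\epsi)$, and the product $m^\epsi D_pH(x,Du^\epsi)$ pass termwise in \eqref{pointwise}, \eqref{subsol}, and \eqref{FP}; the zeroth-order terms $\epsi m^\epsi$ and the forcing $\epsi$ in \eqref{FP} vanish, while $\epsi_n u^{\epsi_n}\to-\lambda$ replaces the discount term in the Hamilton-Jacobi equation.

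\textbf{Viscosity supersolution and main obstacle.} When $pr>d$, the Morrey embedding $W^{1,pr}\hookrightarrow C^{0,\gamma}$ with $\gamma=1-d/(pr)$ gives that $\langle u^{\epsi_n}\rangle$ is uniformly bounded in $C^{0,\gamma}$; by Arzel\`a-Ascoli it converges uniformly to $u$, hence $\epsi_n\langle u^{\epsi_n}\rangle\to 0$ uniformly, and combined with $\epsi_n\int u^{\epsi_n}\,dx\to -\lambda$ we conclude $\epsi_n u^{\epsi_n}\to -\lambda$ uniformly. Standard stability of viscosity supersolutions under uniform convergence applied to \eqref{supersol} from Theorem \ref{result1} then yields \eqref{EPsupersol}. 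The main obstacle throughout is step (iii): with only weak convergence one cannot identify the limits of $f(x,m^\epsi)$, $H(x,Du^\epsi)$ or $m^\epsi D_pH(x,Du^\epsi)$, and it is the variational/duality structure together with the strict convexity of $F$ and $H^*$ that upgrades these to strong convergence via a $\liminf$ comparison in the energy identity.
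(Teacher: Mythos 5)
Your a priori estimates, the compactness step, and the viscosity-supersolution argument are sound and essentially agree with the paper (Proposition \ref{compact} and the last paragraph of its proof of Theorem \ref{result2}); your energy-identity derivation of the $L^q$ bound on $m^\epsi$ is a legitimate variant of the paper's variational estimates, modulo the routine remark that testing \eqref{FP} with $u^\epsi$ itself requires a density argument using $w^\epsi\in L^{\frac{r'q}{r'+q-1}}(\Tt^d)=(L^{pr}(\Tt^d))'$. The genuine gap is in your step (iii), which is the heart of the theorem. You assert that ``the variational characterization \ldots together with strict convexity \ldots forces equality in the $\liminf$,'' but you never supply the ingredient that makes this possible: a link between the discounted variational problems and the \emph{ergodic} ones. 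Knowing that $(u^\epsi,m^\epsi)$ is an optimal primal--dual pair at level $\epsi$ (i.e. $\mathcal{A}^\epsi(u^\epsi)+\mathcal{B}(m^\epsi,w^\epsi)=0$, Propositions \ref{prop:dual} and \ref{DPchar}) plus weak lower semicontinuity only gives $\mathcal{A}(u,\lambda)+\mathcal{B}(m,w)\le 0$; to conclude equality (zero duality gap at the limit), and hence that $(u,\lambda)$ and $(m,w)$ are minimizers of the ergodic functionals, one needs the reverse inequality coming from duality for the ergodic problem. The paper supplies exactly this via the $\Gamma$-limsup Lemmas \ref{Asup} and \ref{limsupB} combined with \cite[Lemma 4.3]{CG}, and then concludes by \cite[Proposition 4.5]{CG} that the limiting pair of minimizers is a weak solution to \eqref{EP}. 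Without some version of this step, ``equality in the $\liminf$'' is unjustified.

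A symptom of the gap is that your proof never actually uses the exponent hypothesis, and the role you assign to it is incorrect: admissibility of the limit pair ($w\in(L^{pr})'$, $m\,D_pH(x,Du)\in L^1$) already follows from the uniform bounds of Proposition \ref{compact}. In the paper the hypothesis ($q\ge d$, or $r'\le \frac{qd}{d-q}$ if $q<d$) is used only in Lemma \ref{limsupB}, to build recovery sequences: given $(m_n,w_n)\in K$ one solves $-\Delta v_n=m_n-1$, so $v_n\in W^{2,q}(\Tt^d)$, and the exponent condition is what guarantees $|Dv_n|^{r'}\in L^1(\Tt^d)$ and $w_n^\epsi:=w_n-\epsi(\epsi-1)Dv_n\in L^{\frac{r'q}{r'+q-1}}(\Tt^d)$, so that the perturbed pair lies in $K_\epsi$ with controlled energy. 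Finally, your claimed upgrades to \emph{strong} convergence of $m^{\epsi_n}$ in $L^q$ and of $Du^{\epsi_n}$ in $L^{pr}$ are both too strong and unnecessary. They are too strong because $F^*(x,\cdot)$ is flat (identically $0$) on $(-\infty,f(x,0)]$, so the integrand of $\mathcal{A}$ is not strictly convex in $Du$ and nothing controls $Du^{\epsi_n}$ on $\{m=0\}$; strict convexity plus energy convergence yields at best $L^1$/a.e.\ convergence (Visintin-type results), not convergence in the top norms. They are unnecessary because once the zero duality gap $\mathcal{A}(u,\lambda)+\mathcal{B}(m,w)=0$ is known, the weak-solution relations \eqref{EPpointwise}, \eqref{EPsubsol}, \eqref{EPFP} for the \emph{limit} follow directly from the pointwise Fenchel equality cases, exactly as in Proposition \ref{DPchar} -- no termwise passage to the limit in the nonlinear PDE terms is needed at all.
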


\medskip
We emphasize here that as in Proposition \ref{par-uni} we have the uniqueness of $(m,\lambda)$, where $(u,m,\lambda)\in  W^{1,pr}(\Tt^d) \times L^q(\Tt^d)\times\mathbb{R}$ is a weak solution to \eqref{EP}. 
However, we have the multiplicity of $u$ in general. 
We show an example to illustrate the multiplicity of weak solutions to \eqref{EP} 
in Section \ref{subsec:lack}. Therefore, it is not clear whether $u^\epsi$ converges or not.  
In the next main theorem, we give a condition which any limit of $\langle u^{\epsi_n} \rangle$ satisfies.

\begin{theorem}[Necessary condition]\label{thm:necessary}
Assume either $q\ge d$, or $r'\leq \frac{qd}{d-q}$ if $q<d$.  
Let  $(u^\epsi,m^\epsi)$ be the weak solution to \eqref{DP}. 
 Assume that $\langle u^\epsi \rangle \rightharpoonup \bar u$ weakly in $W^{1,pr}(\Tt^d)$, $m^\epsi \rightharpoonup m$ weakly in  $L^q(\Tt^d)$ and $\int_{\Tt^d} \epsi u^\epsi\, dx \to- \lambda$ as $\epsi \to 0$. Then,  $\bar u$ is a minimizer of
\begin{equation}\label{criterion}
\inf_{u\in \mathcal{E}}  \int_{\Tt^d}\langle  u \rangle  m \mbox{ }dx,
\end{equation}
where we set 
\begin{equation*}\label{def:e}
\mathcal{E}:=\{
u\in W^{1,pr}(\Tt^d)\mid 
(u,m,\lambda) \ \text{is a weak solution to \eqref{EP}}
\}. 
\end{equation*} 
\end{theorem}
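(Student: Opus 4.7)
My plan is to prove the minimality of $\bar u$ directly: I fix any competitor $u\in\mathcal{E}$ and establish the $\epsi$-level inequality
\begin{equation*}
\int_{\Tt^d}\langle u^\epsi\rangle\,m\,dx\;\leq\;\int_{\Tt^d}\langle u\rangle\,m^\epsi\,dx,
\end{equation*}
which, when passed to the limit along the chosen subsequence, yields $\int_{\Tt^d}\langle\bar u\rangle m\,dx\leq\int_{\Tt^d}\langle u\rangle m\,dx$ for every $u\in\mathcal{E}$. The engine of the argument is the duality between the Hamilton--Jacobi and Fokker--Planck equations for the pairs $(u^\epsi,m^\epsi)$ and $(u,m)$, combined with the monotonicity of $f$ in its second variable.

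To derive the $\epsi$-level inequality I would test the discount HJ inequality \eqref{subsol} against the ergodic density $m$, and the ergodic HJ against $m^\epsi$, producing upper bounds on $\int H(x,Du^\epsi)m\,dx$ and $\int H(x,Du)m^\epsi\,dx$ respectively (using $m,m^\epsi\geq 0$ and $\int m=\int m^\epsi=1$). Matching \emph{lower} bounds come from the Fenchel inequalities
\begin{align*}
H(x,Du^\epsi) &\geq Du^\epsi\cdot D_pH(x,Du)-H^*(x,D_pH(x,Du)),\\
H(x,Du) &\geq Du\cdot D_pH(x,Du^\epsi)-H^*(x,D_pH(x,Du^\epsi)),
\end{align*}
where the cross terms $\int mD_pH(x,Du)\cdot Du^\epsi\,dx$ and $\int m^\epsi D_pH(x,Du^\epsi)\cdot Du\,dx$ are evaluated by testing the two Fokker--Planck equations against $u^\epsi$ and $u$, respectively, while the ``diagonal'' $H^*$-terms are rewritten by combining the Fenchel equality $p\cdot D_pH(x,p)=H(x,p)+H^*(x,D_pH(x,p))$ with the pointwise HJ equalities on $\{m>0\}$ (resp.\ $\{m^\epsi>0\}$). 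Assembling these ingredients produces the pair of inequalities
\begin{align*}
\epsi\int_{\Tt^d} u^\epsi m\,dx &\leq \int_{\Tt^d}[f(x,m^\epsi)-f(x,m)]m\,dx-\lambda,\\
\epsi\Bigl[\int_{\Tt^d}u(1-m^\epsi)\,dx-\int_{\Tt^d}u^\epsi\,dx\Bigr] &\leq \int_{\Tt^d}[f(x,m)-f(x,m^\epsi)]m^\epsi\,dx+\lambda.
\end{align*}

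Adding them, the $\lambda$'s cancel and the right-hand side collapses to $-\int_{\Tt^d}[f(x,m^\epsi)-f(x,m)](m^\epsi-m)\,dx$, which is nonpositive by the strict monotonicity of $f(x,\cdot)$. Using $\int m=\int m^\epsi=1$, a short algebraic manipulation identifies the left-hand side as $\epsi\bigl[\int_{\Tt^d}\langle u^\epsi\rangle m\,dx-\int_{\Tt^d}\langle u\rangle m^\epsi\,dx\bigr]$, and dividing by $\epsi>0$ gives the desired $\epsi$-level inequality. Passing to the limit is then standard: the integrability hypothesis ($q\geq d$, or $r'\leq qd/(d-q)$ if $q<d$) ensures via Rellich's theorem that $\langle u^\epsi\rangle\to\bar u$ strongly in an $L^s$ whose conjugate hosts $m$, while $\langle u\rangle\in L^{q'}$ pairs with the weak $L^q$ convergence $m^\epsi\rightharpoonup m$; since $\bar u$ has zero mean, $\int_{\Tt^d}\bar u\,m\,dx=\int_{\Tt^d}\langle\bar u\rangle m\,dx$, and the minimality claim follows.

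The main obstacle is the rigorous justification of the integration by parts used when testing the distributional Fokker--Planck equations against $u$ and $u^\epsi$, which a priori only lie in $W^{1,pr}(\Tt^d)$ rather than $C^\infty(\Tt^d)$. I plan to handle this by smooth approximation together with the integrability of the fluxes $mD_pH(\cdot,Du)$ and $m^\epsi D_pH(\cdot,Du^\epsi)$ in $L^{(pr)'}(\Tt^d)$, which follows from the growth conditions (H1)--(H2) and the a priori bounds used in the proof of Theorem \ref{result1}. A minor point is the passage from the a.e.\ HJ equalities on the supports $\{m>0\}$ and $\{m^\epsi>0\}$ to integrated equalities on $\Tt^d$, which is immediate because the complementary sets contribute nothing after multiplication by the corresponding density.
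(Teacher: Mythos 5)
Your proposal is correct and follows essentially the same route as the paper: the paper derives exactly your two $\epsi$-level inequalities (its \eqref{ineq:u-uep} and \eqref{ineq:u-uep-2}) by pairing each Hamilton--Jacobi (in)equality with the other system's density, using the pointwise gradient-convexity inequality for $H$ where you use Fenchel--Young plus the Fenchel equality (an equivalent device), then adds them so the $\lambda$'s cancel and the monotonicity of $f$ gives the sign, exactly as you do. If anything your write-up is slightly more careful than the paper's: you track the $\pm\lambda$ terms that the paper's displayed intermediate inequalities drop (harmlessly, since they cancel upon addition), and in the inequality weighted by $m$ you correctly use $D_pH(x,Du)$, which is what testing the ergodic Fokker--Planck equation requires, whereas the paper's display writes $D_pH(x,Du^\epsi)$ there, evidently a typo.
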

As an application of Theorem \ref{thm:necessary}, we present a nontrivial example that 
we get the whole convergence of $u^\epsi$. 

We also study the uniqueness issue of ergodic problem \eqref{EP}. 
We define $\mathcal{Z} \subset \Tt^d$ by
\begin{equation}\label{uniquenessset}
\mathcal{Z}:=\{x\in \Tt^d\:|\: H(x,0)-\lambda-f(x,0)\geq0 \} \cup \overline{ \{x\in \Tt^d\:|\: m(x)>0 \}}.
\end{equation}
We give a comparison principle on $\mathcal{Z}$ for ergodic problem \eqref{EP}.
\begin{theorem}[Comparison principle]\label{unique1}
Let $(u,m,\lambda)\in (W^{1,pr}(\Tt^d)\cap USC(\Tt^d))\times L^q(\Tt^d) \times \Rr$ be a weak solution to \eqref{EP}. Let $v\in W^{1,pr}(\Tt^d)\cap LSC(\Tt^d)$ satisfy \eqref{EPsupersol}. If  $u\leq v$ on $\mathcal{Z}$, then $u\leq v$ on $\Tt^d$. 
\end{theorem}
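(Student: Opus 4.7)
The strategy is to reduce the comparison on $\Tt^d$ to a classical viscosity comparison on the open set $A := \Tt^d \setminus \mathcal{Z}$. First I would observe that $\mathcal Z$ is closed, since both defining pieces are closed by continuity of $H(\cdot,0),f(\cdot,0)$, so $A$ is open. On $A$ the density $m$ vanishes a.e.\ (as $A$ is disjoint from $\overline{\{m>0\}}$) and the strict pointwise inequality $\delta(x) := f(x,0) + \lambda - H(x,0) > 0$ holds. The subsolution part of the weak-solution definition for \eqref{EP}, combined with $m \equiv 0$ a.e.\ on $A$, gives $H(x, Du) \leq f(x,0) + \lambda$ a.e.\ on $A$; by the coercivity in (H2) this upgrades to $Du \in L^\infty_{\mathrm{loc}}(A)$, so $u$ admits a locally Lipschitz representative on $A$ (which coincides with the USC one there). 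Convexity of $H(x,\cdot)$ then turns this a.e.\ subsolution inequality into a viscosity subsolution on $A$ through the standard representation of superdifferentials of Lipschitz functions as convex hulls of approximate gradients.

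Next I would carry out a strict-subsolution reduction. Set $K := \min_{\Tt^d} u$ and, for $\theta \in (0,1)$, define $u_\theta := \theta u + (1-\theta) K$. Convexity of $H(x,\cdot)$ yields
\[
H(x, Du_\theta) \leq \theta H(x, Du) + (1-\theta) H(x, 0) \leq f(x, 0) + \lambda - (1-\theta)\delta(x)
\]
a.e., hence also in the viscosity sense, on $A$. On $\mathcal Z$ the hypothesis $u \leq v$ gives $K \leq u \leq v$, so $u_\theta - v = \theta(u-v) + (1-\theta)(K-v) \leq 0$ on $\mathcal Z$.

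Suppose for contradiction that $M := \max_{\Tt^d}(u_\theta - v) > 0$. Since $u_\theta - v$ is USC on the compact torus and $\leq 0$ on $\mathcal Z$, this maximum is attained at some $x_0 \in A$. Doubling variables with $\Phi_\varepsilon(x,y) := u_\theta(x) - v(y) - |x-y|^2/(2\varepsilon)$ produces maximizers $(x_\varepsilon, y_\varepsilon)$ with $x_\varepsilon, y_\varepsilon \to x_0$ and $|x_\varepsilon - y_\varepsilon|^2/\varepsilon \to 0$, so both lie in $A$ for $\varepsilon$ small. Writing $p_\varepsilon := (x_\varepsilon - y_\varepsilon)/\varepsilon$, the strict subsolution inequality for $u_\theta$ at $x_\varepsilon$ and the supersolution inequality for $v$ at $y_\varepsilon$ read
\[
H(x_\varepsilon, p_\varepsilon) \leq f(x_\varepsilon, 0) + \lambda - (1-\theta)\delta(x_\varepsilon), \qquad H(y_\varepsilon, p_\varepsilon) \geq f(y_\varepsilon, 0) + \lambda.
\]
Coercivity of $H$ bounds $|p_\varepsilon|$ uniformly, and uniform continuity of $H$ on $\Tt^d \times \overline{B}_R$ together with continuity of $f(\cdot,0)$ then force, after subtraction and the limit $\varepsilon \to 0$, the contradictory inequality $0 \geq (1-\theta)\delta(x_0) > 0$. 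Hence $u_\theta \leq v$ on $\Tt^d$, and letting $\theta \to 1$ gives $u \leq v$.

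The delicate point is the last step: because (H2) supplies only continuity (not Lipschitz dependence) of $H$ in $x$, a direct comparison principle for $H(\cdot, Du) = f(\cdot,0) + \lambda$ is not available---the doubling-variables error $H(x_\varepsilon, p_\varepsilon) - H(y_\varepsilon, p_\varepsilon)$ carries no a priori modulus. The convex-combination reduction is engineered precisely to open the strictly positive buffer $(1-\theta)\delta$, which swallows this error once coercivity confines $p_\varepsilon$ to a fixed compact set and the maximizers are kept inside $A$. This also explains the shape of $\mathcal Z$: the condition $H(x,0) - \lambda - f(x,0) \geq 0$ in its definition is exactly what makes $\delta > 0$ on the complement, i.e.\ what guarantees the nonempty strict buffer that the argument rests upon.
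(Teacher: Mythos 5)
Your proof is correct and follows essentially the same route as the paper's: both reduce to the open set $\mathcal{Z}^c$ where $m=0$ a.e., upgrade the a.e.\ inequality $H(x,Du)\le f(x,0)+\lambda$ to a viscosity subsolution via coercivity and convexity, and run a doubling-variables argument in which convexity of $H(x,\cdot)$ combined with the strict inequality $H(x,0)-\lambda-f(x,0)<0$ on $\mathcal{Z}^c$ yields the contradiction. The only difference is cosmetic: you build the strict subsolution $u_\theta=\theta u+(1-\theta)\min u$ up front (which also cleanly enforces $u_\theta\le v$ on $\mathcal{Z}$, a normalization the paper leaves implicit), whereas the paper scales $u$ by $\mu<1$ and invokes convexity only after passing to the limit; since $u_\theta$ and $\theta u$ differ by a constant, these are the same trick.
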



Furthermore, we consider the case where 
\begin{equation}\label{cond:con-m}
m\in C(\Tt^d). 
\end{equation}
Under assumption \eqref{cond:con-m}, we have an equivalence between weak solutions and viscosity solutions to \eqref{EP} in the following sense.  

\begin{theorem}[Equivalence]\label{weakvis}
Let $(u,m,\lambda)\in W^{1,pr}(\Tt^d)\times L^q(\Tt^d) \times \Rr$ be a weak solution to \eqref{EP}.
Assume that \eqref{cond:con-m} holds. 
\begin{enumerate}\renewcommand{\labelenumi}{(\roman{enumi})}
\item[{\rm(i)}]
Let $v\in C(\Tt^d)$ be any viscosity solution to
\begin{equation}\label{EPHJ}
H(x,Dv)=f(x,m)+\lambda \quad\text{in}\  \Tt^d.
\end{equation}
 Then, $(v,m,\lambda)$ is a weak solution to \eqref{EP} and satisfies \eqref{EPsupersol}.

\item[{\rm(ii)}] Conversely, if $u$ satisfies \eqref{EPsupersol}, then $u$ is  a viscosity solution to \eqref{EPHJ}.
\end{enumerate}
\end{theorem}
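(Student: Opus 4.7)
For part (i), the straightforward properties of $v$ will follow from standard viscosity-solutions theory: coercivity of $H$ (from (H2)) together with boundedness of $f(\cdot,m)+\lambda$ (by continuity of $m$) makes $v$ Lipschitz; a Lipschitz viscosity subsolution of a convex HJ equation satisfies the equation a.e. at points of differentiability, giving the weak subsolution inequality; the viscosity supersolution property of $v$ combined with monotonicity of $f(x,\cdot)$ and $m\geq 0$ yields \eqref{EPsupersol}; and the a.e.\ equality on the open set $\{m>0\}$ comes from the standard fact that Lipschitz viscosity solutions on open sets satisfy the equation pointwise at every point of differentiability.

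The heart of part (i) is the transport equation for $v$, which I plan to obtain via a Fenchel-Young duality argument. First I will use $v\in W^{1,\infty}$ as a test function against the distributional transport equation for $u$ (justified by mollification since $mD_pH(\cdot,Du)\in L^1$) to obtain $\int_{\Tt^d} m\,Dv\cdot D_pH(x,Du)\,dx=0$. Testing instead with $u$ itself and applying the Fenchel identity $Du\cdot D_pH(x,Du)=H(x,Du)+H^*(x,D_pH(x,Du))$ yields $\int_{\Tt^d} m\,[H(x,Du)+H^*(x,D_pH(x,Du))]\,dx=0$. Combining these with the Fenchel-Young inequality $Dv\cdot D_pH(x,Du)\leq H(x,Dv)+H^*(x,D_pH(x,Du))$ gives
\[
0\leq \int_{\Tt^d} m\,\bigl[H(x,Dv)-H(x,Du)\bigr]\,dx,
\]
and the right-hand side vanishes since $H(x,Du)=H(x,Dv)=f(x,m)+\lambda$ on $\{m>0\}$ while $m=0$ elsewhere. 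The Fenchel-Young inequality therefore becomes an equality $m$-a.e., which by strict convexity of $H$ forces $Du=Dv$ $m$-a.e. Hence $mD_pH(\cdot,Du)=mD_pH(\cdot,Dv)$ a.e., and the transport equation for $(v,m)$ is inherited from that for $(u,m)$.

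For part (ii), the same coercivity argument makes $u$ Lipschitz, so the viscosity subsolution property follows from the standard fact that Lipschitz a.e.-subsolutions of convex HJ equations with continuous RHS are viscosity subsolutions. For the viscosity supersolution property, I plan to split by $m(x_0)$. If $m(x_0)=0$, then $f(x_0,m(x_0))=f(x_0,0)$ and \eqref{EPsupersol} directly gives the required inequality. If $m(x_0)>0$, continuity of $m$ places $x_0$ in the open set $\{m>0\}$. I will then take any viscosity solution $v$ of \eqref{EPHJ}; by part (i), $(v,m,\lambda)$ is a weak solution, and applying the Fenchel-Young argument of (i) symmetrically to the two weak solutions $u$ and $v$ yields $Du=Dv$ $m$-a.e. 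On each connected component $U$ of the open set $\{m>0\}$, the Lipschitz functions $u$ and $v$ have identical gradients a.e., so $u=v+c_U$ for a constant; thus $u$ is a viscosity solution on $\{m>0\}$, giving the supersolution property at $x_0$.

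The main obstacle is the existence of the viscosity solution $v$ used in (ii). Since $u$ is a Lipschitz viscosity subsolution of $H(x,Dv)\leq f(x,m)+\lambda$, the plan is to produce $v$ by Perron's method, which succeeds provided $\lambda$ is the critical value for the ergodic HJ equation with continuous datum $f(\cdot,m(\cdot))$. I expect this identification of $\lambda$ to follow from the weak-solution structure, in particular from the transport equation for $m$ forcing $\{m>0\}$ to lie in the Aubry set of $H(\cdot,\cdot)-f(\cdot,m)$; verifying this precisely, and producing the Perron supersolution explicitly, is the most delicate step of the argument.
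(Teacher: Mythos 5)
Your two core arguments are correct, and on the main structural point you take a genuinely different route from the paper. To show that $(v,m,\lambda)$ is a weak solution in part (i), you verify Definition \ref{def:weak-erg} directly, deriving the transport equation \eqref{EPFP} for $v$ from the one for $u$ via the Fenchel--Young computation (test \eqref{EPFP} with $v$ and with $u$, use the identity $Du\cdot D_pH(x,Du)=H(x,Du)+H^*(x,D_pH(x,Du))$, conclude equality in Fenchel--Young $m$-a.e., hence $Du=Dv$ $m$-a.e. by strict convexity). The paper instead exploits the variational structure: it shows $\mathcal{A}(v,\lambda)=\mathcal{A}(u,\lambda)$ by splitting $\Tt^d$ into $\{m>0\}$ and $\{m=0\}$ and using $F^*(x,a)=0$ for $a\le f(x,0)$, concludes that $(v,\lambda)$ is a minimizer of $\mathcal{A}$, and then invokes the characterization of minimizers as weak solutions (\cite[Proposition 4.5]{CG}, the ergodic analogue of Proposition \ref{DPchar}). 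Likewise, in part (ii) you rederive $Du=Dv$ a.e.\ on $\{m>0\}$ by the same duality computation where the paper simply cites Proposition \ref{par-uni}; the remainder of your part (ii) (splitting on $m(x_0)=0$ versus $m(x_0)>0$, and writing $u=v+c_U$ on a connected neighborhood inside $\{m>0\}$ so that $u$ inherits the supersolution property from $v$) is exactly the paper's argument. Your route is more self-contained; the paper's is shorter because it reuses machinery already built.

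The genuine gap is the one you flagged yourself: part (ii) requires the existence of at least one viscosity solution $v$ of \eqref{EPHJ}, i.e., the identification of $\lambda$ with the critical (ergodic) value $c$ of $H(x,\cdot)-f(x,m(x))$. The paper disposes of this with an unproved ``we can easily see'' remark, so you are not worse off than the paper, but your proposed repair --- Aubry-set containment plus an explicit Perron supersolution --- is both left unverified and heavier than necessary; in fact your own computations close the gap. First, $c\le\lambda$: mollify $u$; by convexity of $H(x,\cdot)$ (Jensen), uniform continuity of $H$ on $\Tt^d\times\overline{B}(0,\|Du\|_{\Li})$, and uniform continuity of $x\mapsto f(x,m(x))$ (this is where \eqref{cond:con-m} enters), $u_\delta:=u\ast\rho_\delta$ satisfies $H(x,Du_\delta)\le f(x,m)+\lambda+\omega(\delta)$ classically, so smooth subsolutions exist at every level above $\lambda$. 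Second, $c\ge\lambda$: if $\phi\in C^1(\Tt^d)$ satisfies $H(x,D\phi)\le f(x,m)+c'$ on $\Tt^d$, test \eqref{EPFP} with $\phi$ and apply Fenchel--Young:
\begin{align*}
0=\int_{\Tt^d}mD_pH(x,Du)\cdot D\phi\,dx
&\le\int_{\Tt^d}m\bigl[H(x,D\phi)+H^*(x,D_pH(x,Du))\bigr]\,dx\\
&\le (c'-\lambda)+\int_{\Tt^d}m\bigl[f(x,m)+\lambda+H^*(x,D_pH(x,Du))\bigr]\,dx,
\end{align*}
and the last integral vanishes: by \eqref{EPpointwise} it equals $\int_{\Tt^d}m\bigl[H(x,Du)+H^*(x,D_pH(x,Du))\bigr]\,dx=\int_{\Tt^d}m\,Du\cdot D_pH(x,Du)\,dx$, which is zero by testing \eqref{EPFP} with $u$ (your mollification step). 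Hence $c'\ge\lambda$ for every smooth subsolution level, so $c=\lambda$, and the classical ergodic existence theory (Lions--Papanicolaou--Varadhan) then provides a Lipschitz viscosity solution of \eqref{EPHJ}. With this inserted, your proof is complete; without it, part (ii) rests on an unproven existence assertion.
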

We give sufficient conditions to have \eqref{cond:con-m} in Section \ref{sec:m-conti}.

\bigskip
This paper is organized as follows. In Section \ref{well-pd}, we prove Theorem \ref{result1}, 
that is, we establish the existence and uniqueness of weak solutions to \eqref{DP}. 
In Section \ref{sta}, we investigate weak compactness of $(u^\epsi,m^\epsi)$ and construct a weak solution to the ergodic problem \eqref{EP} as a limit, which implies Theorem \ref{result2}. 
Then, we observe lack of uniqueness of weak solutions for ergodic problem \eqref{EP}  in Section \ref{nonuniqueness}. 
We prove Theorem \ref{thm:necessary} and show a nontrivial example to get a convergence of $u^\epsi$ in Section \ref{selection}. 
In Section \ref{uniquenessstructure}, we prove Theorems \ref{unique1}, \ref{weakvis}.

\section{Well-posedness of discount problem}\label{well-pd}
In this section, we establish the well-posedness result for \eqref{DP} by following the arguments in \cite{Car1,Gra,CG} with a careful modification to the discount problem. 
\subsection{Optimization problems}
We consider two optimization problems corresponding to \eqref{DP}.
Define $\mathcal{A}^\epsi: W^{1,pr}(\Tt^d) \to \Rr$ by
\begin{equation}\label{Aepsi}
\mathcal{A}^\epsi(\phi):=\int_{\Tt^d} F^*(x, \epsi \phi+H(x,D\phi))-\epsi \phi \mbox{ }dx.
\end{equation}
We first notice that $\phi \mapsto \mathcal{A}^\epsi(\phi)$ is convex due to the convexities of $F^{\ast}(x,\cdot)$ and $H(x,\cdot)$. 

Let $K_\epsi$ be the set of pairs $(m,w) \in L^q(\Tt^d)\times   L^{\frac{r'q}{r'+q-1}}(\Tt^d;\Rr^d)$ such that $m\geq 0$ a.e. in $\Tt^d$, and satisfies
\begin{equation*}
\epsi m+\div(w)=\epsi \quad \mbox{in } \Tt^d, \quad \mbox{in the sense of distribution, } 
\end{equation*}
that is,
\begin{equation}\label{DPFP}
\int_{\Tt^d} D\psi\cdot w\mbox{ }dx=\int_{\Tt^d} \epsi(m-1)\psi \mbox{ }dx \quad \text{for all} \  \psi \in C^1(\Tt^d).
\end{equation}

 Note that $r'>1$ and $q>1$ implies $\frac{r'q}{r'+q-1}>1$. We remark that \eqref{DPFP} yields $\int_{\Tt^d} m\mbox{ }dx=1$.

We next define $\mathcal{B}: K_\epsi \to \Rr \cup\{+\infty\}$ by 
\begin{equation}\label{B}
\mathcal{B}(m,w):=\int_{\Tt^d} mH^*(x,-\frac{w}{m})+F(x,m) \mbox{ }dx,
\end{equation}
where if $m(x)=0$, then we set
\begin{align*}
mH^*(x,-\frac{w}{m}):=
\begin{cases}
0&\quad \mbox{if }w(x)=0\\
+\infty &\quad \mbox{if }w(x)\neq0.
\end{cases}
\end{align*}
It is easy to see that 
$(m,w)\mapsto mH^*(x,-\frac{w}{m})$ is convex. 
Indeed, for $(m_1,w_1), (m_2,w_2)\in K_\epsi$ and $t\in (0,1)$, set $(m^t, w^t):= (tm_1+(1-t)m_2,tw_1+(1-t)w_2)$ and then 
\begin{align*}
m^tH^*(x,-\frac{w^t}{m^t})
&=\sup_{p\in \Rr^d} \{ -w^t\cdot p-m^tH(x,p)\}\\
&=\sup_{p\in \Rr^d} \{ t(-w_1\cdot p-m_1H(x,p))+(1-t)(-w_2\cdot p-m_2H(x,p))\}\\
&\leq tm_1H^*(x,-\frac{w_1}{m_1})+(1-t)m_2H^*(x,-\frac{w_2}{m_2}).
\end{align*}
Moreover, noting that $m\mapsto F(x,m)$ and $w\mapsto H^{\ast}(x,-\frac{w}{m})$ are strictly convex 
since $f(x,\cdot)$ is strictly increasing, we see that 
$(m,w)\mapsto \mathcal{B}(m,w)$ is strictly convex. 

\medskip
First, we check that functionals $\mathcal{A}^{\epsi}$ and $\mathcal{B}$ are weakly lower semi-continuous. 
\begin{lemma}\label{lowersemi}
Let $\mathcal{A}^\epsi$ and $\mathcal{B}$ be the functionals defined by \eqref{Aepsi} and \eqref{B}, respectively. 
Then, $\mathcal{A}^\epsi$ and  $\mathcal{B}$ are weakly lower semi-continuous in $W^{1,pr}(\Tt^d)$ and  $L^q(\Tt^d) \times  L^{\frac{r'q}{r'+q-1}}(\Tt^d;\Rr^d)$, respectively. 
\end{lemma}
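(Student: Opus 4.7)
The plan is to reduce weak to strong lower semi-continuity via convexity. For a convex functional on a reflexive Banach space, strong lower semi-continuity implies weak lower semi-continuity: if $\phi_n \rightharpoonup \phi$ and (after passing to a subsequence) $\mathcal{A}^\epsi(\phi_n) \to \liminf_n \mathcal{A}^\epsi(\phi_n)$, then Mazur's lemma produces convex combinations $\tilde\phi_n = \sum_k \alpha_{n,k}\phi_k$ converging strongly to $\phi$; convexity controls $\mathcal{A}^\epsi(\tilde\phi_n)$ by $\sup_{k\geq n}\mathcal{A}^\epsi(\phi_k)$, and strong lower semi-continuity yields $\mathcal{A}^\epsi(\phi) \leq \liminf_n \mathcal{A}^\epsi(\phi_n)$. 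The same reduction applies to $\mathcal{B}$, whose convexity is already recorded in the excerpt, so the remaining work is in each case to verify strong lower semi-continuity via Fatou's lemma.

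Convexity of $\mathcal{A}^\epsi$ follows from two observations: $(\phi, D\phi) \mapsto \epsi\phi + H(x,D\phi)$ is convex by convexity of $H(x,\cdot)$, while $F^*(x, \cdot) = \sup_{m\geq 0}\{am - F(x,m)\}$ is convex and nondecreasing, so the composition is convex. For strong lower semi-continuity, take $\phi_n \to \phi$ in $W^{1,pr}(\Tt^d)$, extract a subsequence along which $\phi_n \to \phi$ and $D\phi_n \to D\phi$ a.e., and apply Fatou's lemma to the nonnegative integrand $F^*(x,\epsi\phi_n + H(x,D\phi_n))$; nonnegativity comes from $F^*(x,a) \geq -F(x,0) = 0$. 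The linear term $-\int_{\Tt^d} \epsi \phi_n\, dx$ is continuous with respect to the strong topology, so the two contributions combine to give strong lower semi-continuity of $\mathcal{A}^\epsi$.

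For $\mathcal{B}$, given $(m_n, w_n) \to (m, w)$ strongly in $L^q(\Tt^d) \times L^{r'q/(r'+q-1)}(\Tt^d;\Rr^d)$, pass to a subsequence with a.e. convergence. On $\{m > 0\}$ the integrand $m_n H^*(x,-w_n/m_n)$ converges pointwise to $m H^*(x,-w/m)$ by continuity of $H^*$; on $\{m = 0\} \cap \{w = 0\}$ the limiting value is $0$; on $\{m = 0\} \cap \{w \neq 0\}$ the superlinear lower bound \eqref{Hstar} forces $m_n H^*(x,-w_n/m_n) \geq |w_n|^{r'}/(Cm_n^{r'-1}) - Cm_n \to +\infty$, matching the convention. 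Using $H^*(x,\cdot) \geq -C$ and $F(x,m) \geq -C$, both summands of the integrand are bounded below by integrable functions on $\Tt^d$, so Fatou's lemma yields $\mathcal{B}(m,w) \leq \liminf_n \mathcal{B}(m_n, w_n)$. The main technical delicacy is precisely the singular set $\{m = 0,\, w \neq 0\}$, where the integrand is defined by convention and can jump to $+\infty$; the reason the pointwise limit respects lower semi-continuity there is exactly the superlinear growth of $H^*$ provided by \eqref{Hstar}.
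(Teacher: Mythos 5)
Your proof is correct and follows essentially the same route as the paper: convexity reduces weak to strong lower semi-continuity, and strong lower semi-continuity is obtained from a.e.\ convergence along a subsequence together with Fatou's lemma; you also supply the details for $\mathcal{B}$ (including the singular set $\{m=0,\,w\neq 0\}$, handled via the superlinearity of $H^*$), which the paper dispatches with ``Similarly.'' One small point to tighten: in the Fatou step for $\mathcal{B}$, the lower bound for the kinetic term is $m_n H^*\bigl(x,-\tfrac{w_n}{m_n}\bigr)\geq -Cm_n$, which depends on $n$, so plain Fatou does not literally apply summand by summand; either combine the two terms and use $F(x,m_n)\geq \tfrac{1}{C}|m_n|^q-C$ with $q>1$ to obtain a uniform constant lower bound for the whole integrand (the same device the paper uses for $\mathcal{A}^\epsi$), or invoke the generalized Fatou lemma with varying lower bounds $-Cm_n\to -Cm$ converging in $L^1(\Tt^d)$ and a.e.
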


\begin{proof}
Since $\phi \mapsto \mathcal{A}^\epsi(\phi)$ is convex, it suffices to show that $\mathcal{A}^\epsi$ is lower semi-continuous. Suppose that there exists $\{\phi_n\}_{n\in \Nn} \subset W^{1,pr}(\Tt^d)$ such that $\phi_n \to \phi$ in $W^{1,pr}(\Tt^d)$ and $\liminf_{n \to \infty} \mathcal{A}^\epsi(\phi_n)<\mathcal{A}^\epsi(\phi)$. 
Then, taking a subsequence if necessary, we have $\phi_n(x) \to\phi(x)$ on $\Tt^d$.
 Note that there exists a constant $M>0$ such that
\[F^*(x,\epsi \phi_n(x)+H(x,D\phi_n(x)))-\epsi \phi_n(x)>-M\]
for all $x\in \Tt^d$ and $n \in \Nn$, because $F^*(x,\cdot)$ is nondecreasing and $p>1$. 
It follows from Fatou's lemma that
\begin{align*}
\liminf_{n\to \infty} \mathcal{A}^\epsi(\phi_n)+M&=\liminf_{n\to \infty} \int_{\Tt^d} F^*(x,\epsi \phi_n+H(x,D\phi_n))-\epsi \phi_n +M \mbox{ }dx\\
 &\geq \int_{\Tt^d} F^*(x,\epsi \phi+H(x,D\phi))-\epsi \phi +M \mbox{ }dx=\mathcal{A}^\epsi(\phi)+M,
\end{align*}
which is a contradiction. Hence, \eqref{Aepsi} is lower semi-continuous. Similarly, we can prove that  $\mathcal{B}$ is weakly lower semi-continuous.
\end{proof}
Here, we consider two optimization problems. At first, we study the one corresponding to the first equation of \eqref{DP}. 
\begin{proposition}\label{Aopti}
For all $\epsi>0$, the optimization problem
\begin{equation}\label{Aepsimin}
\inf_{\phi\in W^{1,pr}(\Tt^d)}\mathcal{A}^\epsi(\phi)
\end{equation}
has a minimizer $\phi \in W^{1,pr}(\Tt^d)$. Moreover, if $pr>d$, there exists a minimizer $\bar \phi \in W^{1,pr}(\Tt^d)$ satisfying 
\begin{equation}\label{barphi}
 \epsi \bar \phi+H(x,D\bar \phi)\geq f(x,0) \quad \mbox{ in } \Tt^d \quad
 \mbox{in the sense of viscosity solutions}.
\end{equation}
\end{proposition}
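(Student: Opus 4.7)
The plan is to apply the direct method of the calculus of variations to produce a minimizer in $W^{1,pr}(\Tt^d)$, and then, when $pr>d$, to upgrade it to a maximal minimizer $\bar\phi$ satisfying the supersolution inequality \eqref{barphi} by a Perron-type selection.

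\emph{Existence by the direct method.} First I would test on $\phi\equiv 0$ to see $\inf\mathcal{A}^{\epsi}\le \mathcal{A}^{\epsi}(0)<\infty$, since $H(\cdot,0)$ is bounded on $\Tt^d$ and $F^{\ast}$ has polynomial growth. For coercivity, let $\{\phi_n\}$ be a minimizing sequence. Using Young's inequality $F^{\ast}(x,a)\ge ab-F(x,b)$ with $b\in\{1/2,1,2\}$ and integrating yields estimates of the form
\[
\mathcal{A}^{\epsi}(\phi)\ge \pm\,\epsi\int_{\Tt^d}\phi\,dx+\frac{1}{C}\int_{\Tt^d}|D\phi|^{r}\,dx-C,
\]
which together control $\bigl|\int_{\Tt^d}\phi_n\,dx\bigr|$ and $\|D\phi_n\|_{L^r}$, hence (by Poincar\'e--Wirtinger) the full $W^{1,r}$ norm of $\phi_n$. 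To upgrade to $W^{1,pr}$ I would exploit the sharper coercivity $F^{\ast}(x,a)\ge C^{-1}a_+^{p}-C$: on the region where $H(x,D\phi_n)$ dominates $\epsi|\phi_n|$ this yields $\int |D\phi_n|^{rp}\,dx<\infty$, while on the complementary region the pointwise bound $|D\phi_n|^{r}\lesssim \epsi|\phi_n|+1$ combined with the Sobolev embedding of $W^{1,r}(\Tt^d)$ closes the estimate. With $\|\phi_n\|_{W^{1,pr}}$ bounded, I extract a weakly convergent subsequence $\phi_n\rightharpoonup\phi$ in $W^{1,pr}(\Tt^d)$; Lemma \ref{lowersemi} then yields $\mathcal{A}^{\epsi}(\phi)\le \liminf_n\mathcal{A}^{\epsi}(\phi_n)=\inf\mathcal{A}^{\epsi}$, so $\phi$ is a minimizer.

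\emph{Supersolution property via Perron selection.} Assume $pr>d$, so every minimizer is H\"older continuous by Morrey's embedding. Let $\mathcal{M}$ be the set of minimizers and take $\bar\phi$ to be the pointwise supremum along a maximal chain in $(\mathcal{M},\le)$, whose existence follows from Zorn's lemma once one checks (by monotone convergence together with Lemma \ref{lowersemi}) that the supremum of a totally ordered family of minimizers is again a minimizer. To verify \eqref{barphi}, suppose for contradiction that some smooth $\varphi$ touches $\bar\phi$ from below at $x_0\in\Tt^d$ with $\epsi\bar\phi(x_0)+H(x_0,D\varphi(x_0))<f(x_0,0)$. Continuity of $H$, $\bar\phi$ and $f$ extends this strict inequality to a neighborhood $U$ of $x_0$, so for all sufficiently small $c>0$ the function $\tilde\phi:=\max(\bar\phi,\varphi+c)$ (extending $\varphi$ by a large negative constant outside $U$, if necessary) satisfies
\[
\epsi\tilde\phi(x)+H(x,D\tilde\phi(x))\le f(x,0)\qquad \text{a.e.\ on }\{\tilde\phi=\varphi+c\}\subset U,
\]
whence $F^{\ast}(x,\epsi\tilde\phi+H(x,D\tilde\phi))=0$ there; elsewhere $\tilde\phi=\bar\phi$. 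Since $F^{\ast}\ge 0$ in general and $\tilde\phi\ge\bar\phi$ with strict inequality on a set of positive measure,
\[
\mathcal{A}^{\epsi}(\tilde\phi)-\mathcal{A}^{\epsi}(\bar\phi)\le -\epsi\int_{\Tt^d}(\tilde\phi-\bar\phi)\,dx<0,
\]
so $\tilde\phi$ is a strictly larger minimizer, contradicting the maximality of $\bar\phi$.

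\emph{Main obstacle.} The hardest step is the $W^{1,pr}$ coercivity: the mixture of $\phi$ and $D\phi$ inside the nonlinear $F^{\ast}$, together with the linear $-\epsi\phi$ correction, forces a careful partition of $\Tt^d$ into regions where either $H(x,D\phi)$ or $\epsi|\phi|$ dominates, followed by a Sobolev embedding to absorb the remaining lower-order term. A secondary difficulty in the Perron step is verifying that the cutoff perturbation $\tilde\phi$ lies in $W^{1,pr}(\Tt^d)$ and that the strict inequality at $x_0$ transfers to a positive gain in $\mathcal{A}^{\epsi}$; this uses continuity of all data and the key fact that $F^{\ast}(x,a)=0$ precisely on $\{a\le f(x,0)\}$.
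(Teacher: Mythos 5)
Your existence argument follows the same direct-method skeleton as the paper (minimizing sequence, coercivity, weak compactness, Lemma \ref{lowersemi}), but your $W^{1,pr}$ coercivity step has a genuine gap. On the region where $\epsi|\phi_n|$ dominates $H(x,D\phi_n)$ you reduce to the pointwise bound $|D\phi_n|^{r}\leq C(\epsi|\phi_n|+1)$ and invoke the Sobolev embedding of $W^{1,r}(\Tt^d)$ to control the resulting term $\int_{\Tt^d}|\phi_n|^{p}\,dx$; this requires $p\leq r^{*}:=\frac{dr}{d-r}$ when $r<d$, and no such relation between $p=\frac{q}{q-1}$ and $r$ is assumed. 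For instance, with $d=3$, $r=\frac32$, $q=\frac43$ one has $r^{*}=3<4=p$, so a single application of the embedding does not close the estimate. The gap is repairable, either by bootstrapping (each pass multiplies the integrability exponent of $\phi_n$ by a factor at least $r>1$, so finitely many iterations suffice), or, as the paper does in \eqref{Dphiesti}, by avoiding Sobolev embedding altogether: bound $\|\epsi\phi_n\|_{L^p}^{p}\leq C\epsi^{p}\bigl(\|\langle\phi_n\rangle\|_{L^p}^{p}+\bigl|\int_{\Tt^d}\phi_n\,dx\bigr|^{p}\bigr)$, use the Poincar\'e--Wirtinger inequality to get $\|\langle\phi_n\rangle\|_{L^p}\leq C\|D\phi_n\|_{L^p}\leq C\|D\phi_n\|_{L^{pr}}$, and absorb the lower-order power $\|D\phi_n\|_{L^{pr}}^{p}$ into $\|D\phi_n\|_{L^{pr}}^{pr}$ by Young's inequality with a small parameter. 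As written, your estimate fails in admissible parameter regimes, so this step needs the correction.

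For the supersolution property \eqref{barphi} you take a genuinely different route from the paper, and your core computation is correct. The paper modifies the minimizing sequence: it solves $\epsi\bar\phi_n+H(x,D\bar\phi_n)=\max\{\alpha_n,f(x,0)\}$, gets $\bar\phi_n\geq\phi_n$ by comparison, checks via \eqref{Fstar} that $\mathcal{A}^\epsi(\bar\phi_n)\leq\mathcal{A}^\epsi(\phi_n)$, and concludes by stability of viscosity solutions under the uniform convergence furnished by $pr>d$. You instead perturb a fixed minimizer: if a smooth $\varphi$ touches $\bar\phi$ from below at $x_0$ with $\epsi\bar\phi(x_0)+H(x_0,D\varphi(x_0))<f(x_0,0)$, the cutoff $\tilde\phi=\max(\bar\phi,\varphi+c)$ has vanishing $F^*$-integrand on $\{\tilde\phi=\varphi+c\}$ (by \eqref{Fstar}) and exceeds $\bar\phi$ on a set of positive measure, whence $\mathcal{A}^\epsi(\tilde\phi)<\mathcal{A}^\epsi(\bar\phi)$. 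But notice what this inequality contradicts: it says $\tilde\phi$ has energy strictly \emph{below} the infimum, which contradicts \emph{minimality} of $\bar\phi$, not maximality. Consequently the entire Zorn's-lemma/maximal-chain apparatus is superfluous, and it is also the weakest link in your write-up (that the pointwise supremum of an uncountable chain of minimizers lies in $W^{1,pr}(\Tt^d)$ and is again a minimizer is only asserted). Deleting it, and adding the standard reduction to a strict touching point (replace $\varphi$ by $\varphi-|x-x_0|^2$, so that $\{\varphi+c>\bar\phi\}$ is compactly contained in the neighborhood where the strict inequality holds and $\tilde\phi$ patches continuously), your argument proves that \emph{every} minimizer, which is continuous since $pr>d$, satisfies \eqref{barphi}. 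This is cleaner and formally stronger than the paper's statement, and consistent with it, since the paper's later uniqueness theorem shows a posteriori that the minimizer is unique.
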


\begin{proof}
Note that $\inf_{\phi \in W^{1,pr}}\mathcal{A}^\epsi(\phi)=\inf_{\phi \in C^{1}}\mathcal{A}^\epsi(\phi)$. 
Take a sequence $\{\phi_n\}_{n\in \Nn} \subset C^{1}(\Tt^d)$ satisfying $\mathcal{A}^\epsi(\phi_n) \to \inf_{\phi \in W^{1,pr}}\mathcal{A}^\epsi(\phi)$ as $n \to \infty$. 
Here, we prove that $\{ \phi_n\}_{n\in \Nn} $ is bounded in $W^{1,pr}(\Tt^d)$. As a preliminary, we check that $\int_{\Tt^d} \phi_n dx$ is bounded.
Note that $F^*(x,a)$ is nondecreasing in $a$. By Jensen's inequality, we have
\begin{align}\label{Aepsibound}
\mathcal{A}^\epsi( \phi_n)
&\geq F^*\left(x, \int_{\Tt^d} \epsi \phi_n+ H(x,D \phi_n)\mbox{ }dx \right)-\int_{\Tt^d} \epsi  \phi_n \mbox{ }dx\notag \\
&\geq F^*\left(x, -C+\int_{\Tt^d} \epsi  \phi_n \mbox{ }dx\right)-\int_{\Tt^d} \epsi  \phi_n \mbox{ }dx\notag \\
&\geq C\left|\int_{\Tt^d} \epsi  \phi_n \mbox{ }dx \right|^p-C-\left|\int \epsi \ \phi_n \mbox{ }dx \right|,
\end{align}
for a sufficient large $C>0$. 
Because $p>1$,  $|\int_{\Tt^d}  \phi_n dx|$ is bounded.
Next, we prove that $D \phi_n$ is bounded in $L^{pr}(\Tt^d)$. We have
\begin{align}\label{Dphiesti}
\| D \phi_n\|_{L^{pr}(\Tt^d)}^{pr} \notag
&=\int_{\Tt^d} ||D \phi_n|^r|^p \mbox{ }dx \leq C \int_{\Tt^d} |H(x,D\phi_n)+C|^p\mbox{ }dx\\ \notag
&\leq C \int_{\Tt^d} |\epsi  \phi_n+ H(x,D \phi_n)|^p+|C-\epsi  \phi_n|^p \mbox{ }dx\\ \notag
&\leq C \int_{\Tt^d} F^*(x,\epsi  \phi_n+H(x,D\phi_n))+C+|C-\epsi  \phi_n|^{p} \mbox{ }dx\\ \notag
&\leq C\left(\mathcal{A}^\epsi( \phi_n)+\int_{\Tt^d} \epsi \phi_n \mbox{ }dx+\|\epsi  \phi_n\|_{L^{p}}^p+1\right)\\ \notag
&\leq C\left(\inf\mathcal{A}^\epsi+1+ \epsi^p \|\langle \phi_n \rangle\|_{L^p(\Tt^d)}^p+C\left|\epsi \int_{\Tt^d}  \phi_n \mbox{ }dx\right|^p+\left|\epsi \int_{\Tt^d}  \phi_n \mbox{ }dx\right|+1\right)\\ \notag
&\leq C \epsi^p \|D \phi_n \|_{L^p(\Tt^d)}^p+C\\ 
&\leq \delta C \epsi^p \|D \phi_n\|^{pr}_{L^{pr}(\Tt^d)}+C_\delta+C,
\end{align}
by using the Poincare-Wirtinger inequality and the Young inequality with arbitrary $\delta>0$ in the last inequality.
By taking a small $\delta>0$, we obtain $\| D \phi_n\|_{L^{pr}(\Tt^d)}\leq C$.

It follows from the Poincare-Wirtinger inequality that $\phi_n$ is bounded in $W^{1,pr}(\Tt^d)$. 
Thus, we can choose a subsequence such that $\phi_n$ converges to some $\phi$ weakly in $W^{1,pr}(\Tt^d)$ as $n\to\infty$. 
In light of Lemma \ref{lowersemi}, $\phi$ is a minimizer of \eqref{Aepsimin}. 

On the other hands, set 
\[\alpha_n(x):=\epsi \phi_n(x)+H(x,D\phi_n(x)).\]
Let $\bar \phi_n$ be the viscosity solution to
\begin{equation*}
\epsi \bar \phi_n+H(x,D\bar \phi_n(x))=\max\{\alpha_n(x),f(x,0)\} \quad\text{in}\  \Tt^d.
\end{equation*}
By comparison, it holds $\bar \phi_n \geq \phi_n$. Noting that 
\begin{equation}\label{Fstar}
F^*(x,a)=0 \quad \text{for all} \  x\in \Tt^d \mbox{ and }  a\leq f(x,0)\le0, 
\end{equation} 
we have 
\begin{align*}
F^*(x,\epsi \bar \phi_n+H(x,D\bar \phi_n(x)))
&=F^*(x,\max\{\alpha_n(x),f(x,0)\})\\
&=\begin{cases}
F^*(x,\alpha_n(x)) &\quad \mbox{if } \alpha_n(x)\geq f(x,0)\\
0 &\quad \mbox{if } \alpha_n(x)\leq f(x,0), 
\end{cases}
\end{align*}
which implies that 
\begin{align*}
\int_{\Tt^d} F^\ast(x,\epsi\bar{\phi}_n+H(x,D\bar\phi_n(x)))\,dx 
&=\,  
\int_{\{\alpha_n\ge f(\cdot,0)\}} F^\ast(x,\epsi\bar{\phi}_n+H(x,D\bar\phi_n(x)))\,dx \\
&=\, 
\int_{\Tt^d} F^\ast(x,\alpha_n(x))\,dx. 
\end{align*}
Thus, we have
\begin{align*}
\mathcal{A}^\epsi(\bar \phi_n)
 &= \int_{\Tt^d} F^*(x,\alpha_n)-\epsi \bar \phi_n \mbox{ }dx\leq \int_{\Tt^d} F^*(x,\alpha_n)-\epsi \phi_n \mbox{ }dx= \mathcal{A}^\epsi(\phi_n).
\end{align*}
Hence, we obtain $\mathcal{A}^\epsi(\bar \phi_n) \to \inf_{\phi \in W^{1,pr}}\mathcal{A}^\epsi(\phi)$ as $n\to \infty$. Similarly, $\bar \phi_n$ is bounded in $W^{1,pr}(\Tt^d)$. Then, we can choose a subsequence such that $\bar \phi_n$ converges to some $\bar \phi$ weakly in $W^{1,pr}(\Tt^d)$. In light of Lemma \ref{lowersemi}, $\bar \phi$ is a minimizer of \eqref{Aepsimin}.
If $pr>d$, by the Rellich-Kondrachov compact embedding theorem, it holds that $\bar \phi_n \to \bar \phi$ uniformly on $\Tt^d$ as $n\to\infty$. Because 
\begin{equation*}
\epsi \bar \phi_n+H(x,D\bar \phi_n(x))\geq f(x,0) \quad\text{in}\  \Tt^d
\end{equation*}
holds in the viscosity sense, due to the stability of viscosity solutions, $\bar \phi$ satisfies \eqref{barphi}.
\end{proof}

Next, we study the dual optimization problems, which corresponds to the second equation of \eqref{DP}.

\begin{proposition}\label{minexist}
The optimization problem
\begin{equation}\label{Bmin}
\inf_{(m,w)\in K_\epsi} \mathcal{B}(m,w)
\end{equation}
has the unique minimizer $(m,w)\in K_\epsi$.
\end{proposition}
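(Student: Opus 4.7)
My approach is the direct method of the calculus of variations, applied to $\mathcal{B}$ over the convex constraint set $K_\epsi$. First I would note that $(1,0)\in K_\epsi$ with $\mathcal{B}(1,0)=\int_{\Tt^d}H^*(x,0)+F(x,1)\,dx<\infty$, so the infimum in \eqref{Bmin} is a finite real number, and I fix a minimizing sequence $(m_n,w_n)\in K_\epsi$. The finiteness of $\mathcal{B}(m_n,w_n)$ already forces $w_n=0$ a.e.\ on $\{m_n=0\}$, so the integrand $m_n H^*(x,-w_n/m_n)$ is unambiguously defined.

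The key step is to extract uniform $L^q$- and $L^{r'q/(r'+q-1)}$-bounds on $m_n$ and $w_n$. Plugging in the lower growth of $F$ from (H1) and of $H^*$ from \eqref{Hstar} gives
\[
\mathcal{B}(m_n,w_n)\geq \int_{\Tt^d}\frac{1}{C}\,\frac{|w_n|^{r'}}{m_n^{r'-1}}+\frac{1}{C}|m_n|^{q}\,dx-C\bigl(1+\|m_n\|_{L^1(\Tt^d)}\bigr),
\]
and testing \eqref{DPFP} against $\psi\equiv 1$ yields $\|m_n\|_{L^1(\Tt^d)}=1$. Hence both $\|m_n\|_{L^q}$ and the weighted integral $\int_{\Tt^d}|w_n|^{r'}/m_n^{r'-1}\,dx$ are uniformly bounded. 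The $L^{r'q/(r'+q-1)}$-bound on $w_n$ then follows from H\"older's inequality with conjugate exponents $(r'+q-1)/q$ and $(r'+q-1)/(r'-1)$, namely
\[
\int_{\Tt^d}|w_n|^{\frac{r'q}{r'+q-1}}\,dx\leq\left(\int_{\Tt^d}\frac{|w_n|^{r'}}{m_n^{r'-1}}\,dx\right)^{\frac{q}{r'+q-1}}\left(\int_{\Tt^d}m_n^{q}\,dx\right)^{\frac{r'-1}{r'+q-1}}.
\]
Splitting $|w_n|^{r'q/(r'+q-1)}=\bigl(|w_n|^{r'}/m_n^{r'-1}\bigr)^{q/(r'+q-1)}\cdot m_n^{q(r'-1)/(r'+q-1)}$ is what makes this work; this H\"older estimate is the main technical point I anticipate.

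With the bounds in hand, I pass to a (non-relabelled) subsequence along which $m_n\rightharpoonup m$ in $L^q(\Tt^d)$ and $w_n\rightharpoonup w$ in $L^{r'q/(r'+q-1)}(\Tt^d;\Rr^d)$. The sign constraint $m\geq 0$ passes to the weak limit, and since \eqref{DPFP} is linear in $(m,w)$ and tested only against $C^1$ functions, it is preserved; hence $(m,w)\in K_\epsi$. Lemma~\ref{lowersemi} then yields
\[
\mathcal{B}(m,w)\leq\liminf_{n\to\infty}\mathcal{B}(m_n,w_n)=\inf_{K_\epsi}\mathcal{B},
\]
so that $(m,w)$ is a minimizer. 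Uniqueness is immediate from the strict convexity of $\mathcal{B}$ established just before the statement and the convexity of $K_\epsi$: if $(m_1,w_1)$ and $(m_2,w_2)$ were two distinct minimizers, then their midpoint would again lie in $K_\epsi$ and strictly lower the value of $\mathcal{B}$, a contradiction.
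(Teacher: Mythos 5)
Your proof is correct and follows essentially the same route as the paper: the direct method of calculus of variations with the identical coercivity bound from $(1,0)\in K_\epsi$, the same H\"older estimate with exponents $\frac{r'+q-1}{q}$ and $\frac{r'+q-1}{r'-1}$ to bound $w_n$ in $L^{\frac{r'q}{r'+q-1}}$, weak compactness combined with Lemma~\ref{lowersemi}, and uniqueness via the strict convexity of $\mathcal{B}$. The only cosmetic differences are that you spell out explicitly that the constraints defining $K_\epsi$ pass to the weak limit, and you phrase uniqueness as a midpoint contradiction rather than the paper's componentwise statement ($m$ unique, then $w/m$ on $\{m>0\}$, then $w$).
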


\begin{proof}
Take a sequence $(m_n,w_n)\in K_\epsi$ such that $\mathcal{B}(m_n,w_n) \to \inf_{(m,w)\in K_\epsi} \mathcal{B}(m,w)$ as $n\to \infty$. Then, noting that $(1,0)\in K_\epsi$, for sufficiently large $n\in \Nn$, we have
\begin{align}\label{mqesti}
\mathcal{B}(1,0)+1\notag
 \geq \mathcal{B}(m_n,w_n)
 &= \int_{\Tt^d} m_nH^*(x,-\frac{w_n}{m_n})+F(x,m_n)\mbox{ }dx\\ 
& \geq \int_{\Tt^d}\frac{1}{C}|m_n|^q+\frac{m_n}{C}\left|\frac{w_n}{m_n}\right|^{r'}-C \mbox{ }dx.
\end{align}
In particular, $\|m_n\|_{L^q}$ is bounded. 
Note that  $w_n=0$ \textit{a.e.} in $\{m_n=0\}$ because $\mathcal{B}(m_n,w_n)<+\infty$.
By the H\"older inequality, we have
\begin{align}\label{westi}
\int_{\Tt^d} |w_n|^{\frac{r'q}{r'+q-1}}\mbox{ }dx
&=\int_{\{m_n>0\}} |w_n|^{\frac{r'q}{r'+q-1}}\mbox{ }dx \notag \\
&\leq \|m_n\|_{L^q(\Tt^d)}^{\frac{r'-1}{r'+q-1}}\left( \int_{\{m_n>0\}} \frac{|w_n|^{r'}}{m_n^{r'-1}} \mbox{ }dx\right)^{\frac{q}{r'+q-1}}\leq C. 
\end{align}
Hence, we can choose a subsequence such that $(m_n,w_n)\rightharpoonup(m,w)$ weakly in $L^{q}(\Tt^d)\times L^{\frac{r'q}{r'+q-1}}(\Tt^d;\Rr^d)$ as $n\to\infty$. In light of Lemma \ref{lowersemi}, $(m,w)$ is a minimizer of \eqref{Bmin}.

Since $(m,w)\mapsto\mathcal{B}(m,w)$ is strictly convex, $m$ is unique and so is $\frac{w}{m}$ in $\{m>0\}$. As $w=0$ in $\{m=0\}$, uniqueness of $w$ follows as well, 
which implies the uniqueness of the minimizer of \eqref{Bmin}. 
\end{proof}
Finally, we prove that the two optimization problems are in duality.
\begin{proposition}\label{prop:dual}
It holds that
\begin{equation*}
\min_{\phi\in W^{1,pr}(\Tt^d)}\mathcal{A}^\epsi(\phi)=-\min_{(m,w)\in K_\epsi} \mathcal{B}(m,w).
\end{equation*}
\end{proposition}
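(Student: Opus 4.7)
The plan is to deduce the duality via the Fenchel--Rockafellar theorem, identifying the dual of the primal problem $\inf \mathcal{A}^\epsi$ with $\inf_{K_\epsi} \mathcal{B}$. First, I would work in the reflexive Banach setting $E := W^{1,pr}(\Tt^d)$ and $F := L^p(\Tt^d) \times L^{pr}(\Tt^d;\Rr^d)$, and define the bounded linear map $\Lambda\phi := (\epsi\phi, D\phi)$ from $E$ to $F$, together with
\[
\mathcal{F}(\phi) := -\epsi\int_{\Tt^d}\phi\,dx, \qquad \mathcal{G}(\alpha,\beta) := \int_{\Tt^d} F^*(x,\alpha+H(x,\beta))\,dx,
\]
so that $\mathcal{A}^\epsi(\phi) = \mathcal{F}(\phi) + \mathcal{G}(\Lambda\phi)$. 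Since $F(x,\cdot) = +\infty$ on $(-\infty,0)$, $F^*(x,\cdot)$ is nondecreasing, hence its composition with the convex map $(\alpha,\beta)\mapsto \alpha+H(x,\beta)$ yields convexity of $\mathcal{G}$; the growth hypotheses in (H1)--(H2) together with a standard Nemytskii continuity argument show that $\mathcal{G}$ is finite and continuous on $F$.

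As a sanity check I would verify weak duality directly: for any $\phi \in W^{1,pr}(\Tt^d)$ and $(m,w) \in K_\epsi$, applying Fenchel--Young to the pairs $(F,F^*)$ and $(H,H^*)$ gives
\[
F(x,m) + F^*(x,\epsi\phi+H(x,D\phi)) \ge m(\epsi\phi+H(x,D\phi)), \qquad mH^*(x,-w/m) \ge -w\cdot D\phi - mH(x,D\phi),
\]
and combining with the identity $\int_{\Tt^d} w\cdot D\phi\,dx = \epsi\int_{\Tt^d}\phi(m-1)\,dx$, which is precisely the weak form \eqref{DPFP} tested against $\phi$, yields $\mathcal{A}^\epsi(\phi) + \mathcal{B}(m,w) \ge 0$.

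For strong duality, I would apply Fenchel--Rockafellar. The qualification condition holds taking $\phi_0 = 0$, since $\mathcal{F}(0) = 0$ and $\mathcal{G}$ is continuous at $\Lambda 0 = (0,0)$. Writing $F^* = L^q(\Tt^d) \times L^{(pr)'}(\Tt^d;\Rr^d)$ with $(pr)' = r'q/(r'+q-1)$, the adjoint is given by $\langle\Lambda^*(\mu,\nu),\phi\rangle = \int_{\Tt^d}(\epsi\mu\phi+\nu\cdot D\phi)\,dx$. A direct computation shows $\mathcal{F}^*(-\Lambda^*(\mu,\nu))$ equals $0$ if and only if $\epsi\mu-\div(\nu) = \epsi$ in the distributional sense, i.e.\ setting $m := \mu$ and $w := -\nu$, if and only if $(m,w)\in K_\epsi$. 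To compute $\mathcal{G}^*$, I would invoke Rockafellar's theorem on conjugates of convex integral functionals on $L^p$ spaces, reducing to a fiberwise computation; the substitution $\gamma := \alpha + H(x,\beta)$ decouples the two suprema and yields
\[
\mathcal{G}^*(\mu,\nu) = \int_{\Tt^d}\!\bigl[F^{**}(x,\mu) + \mu H^*(x,\nu/\mu)\bigr]dx = \int_{\Tt^d}\!\bigl[F(x,m) + mH^*(x,-w/m)\bigr]dx = \mathcal{B}(m,w),
\]
where $F^{**} = F$ forces $\mu\ge 0$, hence $m\ge 0$, with the conventions used in \eqref{B}. Combining the Fenchel--Rockafellar identity with Propositions \ref{Aopti} and \ref{minexist}, which ensure both infima are attained, then gives the desired equality.

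The main obstacle will be the careful identification of $\mathcal{G}^*$: justifying the interchange of integral and supremum via Rockafellar's result requires checking that the integrand is a normal convex integrand on $L^p(\Tt^d)\times L^{pr}(\Tt^d;\Rr^d)$ and that the conventional values at $m=0$ match $\mathcal{B}$, while the distributional identification of the constraint $\epsi m + \div(w)=\epsi$ relies on the integrability $\nu \in L^{(pr)'}$ to legitimize integration by parts against arbitrary $\phi \in W^{1,pr}(\Tt^d)$.
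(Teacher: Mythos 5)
Your proof is correct, but it takes a genuinely different route from the paper's. The paper argues from the $\mathcal{B}$ side: it encodes the constraint \eqref{DPFP} through a Lagrange multiplier $\phi\in C^1(\Tt^d)$, writes $\min_{K_\epsi}\mathcal{B}$ as a min--sup, swaps the order by Sion's min-max theorem, and then evaluates the inner infimum pointwise via the interchange of minimization and integration \cite[Theorem 14.60]{R}, which produces exactly $-F^*(x,\epsi\phi+H(x,D\phi))+\epsi\phi$. You argue from the $\mathcal{A}^\epsi$ side: you decompose $\mathcal{A}^\epsi=\mathcal{F}+\mathcal{G}\circ\Lambda$ and invoke Fenchel--Rockafellar, identifying the dual variable $(\mu,\nu)$ with $(m,-w)$, recovering the affine constraint defining $K_\epsi$ from the domain of $(\mu,\nu)\mapsto\mathcal{F}^*(-\Lambda^*(\mu,\nu))$ (with the sign constraint $m\ge 0$ enforced by $F^{**}=F$), and recovering $\mathcal{B}$ as $\mathcal{G}^*$ through Rockafellar's theorem on conjugates of integral functionals --- essentially the same interchange ingredient the paper uses, applied to a supremum rather than an infimum. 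Both routes need the identification $L^{r'q/(r'+q-1)}(\Tt^d)=(L^{pr}(\Tt^d))'$ and both conclude via the attainment results of Propositions \ref{Aopti} and \ref{minexist}. What your route buys: the hypothesis to check is a clean constraint qualification (continuity of $\mathcal{G}$ at $\Lambda 0=(0,0)$, immediate from the growth bounds on $F^*$ and $H$), and Fenchel--Rockafellar then yields attainment of the dual problem for free, so the existence half of Proposition \ref{minexist} is re-derived rather than quoted; this is also the argument pattern of \cite{Car1,CG}, on which the paper's notion of weak solution is built. What the paper's route buys is brevity, although the compactness hypothesis in Sion's theorem is not actually verified there (neither $X$ nor $C^1(\Tt^d)$ is compact), so your argument is, if anything, more self-contained at the minimax step. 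Two cosmetic cautions: your symbol $F$ for the target space $L^p(\Tt^d)\times L^{pr}(\Tt^d;\Rr^d)$ collides with the paper's primitive $F(x,m)$, and in the weak-duality check the use of \eqref{DPFP} with test function $\phi\in W^{1,pr}(\Tt^d)$ should be justified by density of $C^1$ in $W^{1,pr}$ together with $m\in L^q=(L^p)'$ and $w\in L^{r'q/(r'+q-1)}=(L^{pr})'$, exactly as in the paper's proof of Proposition \ref{DPchar}.
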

\begin{proof}
Let  $X:=L^q(\Tt^d) \times L^{\frac{r'q}{r'+q-1}}(\Tt^d;\Rr^d)$. We can rewrite \eqref{Bmin} as 
\begin{equation*}
\min_{(m,w)\in K_\epsi} \mathcal{B}(m,w)
=\min_{(m,w)\in X} \sup_{\phi \in C^1} \int _{\Tt^d} mH^*(x,-\frac{w}{m})+F(x,m)+w\cdot D\phi+\epsi(1-m)\phi \mbox{ }dx.
\end{equation*}
Indeed, because $F(x,m)=+\infty$ for $m<0$, it suffices to pay attention to the case $(m,w)\in X$ does not satisfy \eqref{DPFP}. Then,  there exists $\hat \phi \in C^1(\Tt^d)$ satisfying 
\[ \int_{\Tt^d} w\cdot D\hat \phi+\epsi(1-m)\hat \phi \mbox{ }dx\neq0.\]
Set $\hat \phi_n:= n \hat \phi$, changing the signature if necessary, to yield
\[ \int _{\Tt^d} mH^*(x,-\frac{w}{m})+F(x,m)+w\cdot D\hat \phi_n+\epsi(1-m)\hat \phi_n \mbox{ }dx \to +\infty \quad (n\to \infty),\]
which can not be the infimum. 
Hence, this infimum is attained by $(m,w)\in K_\epsi$ obtained by Proposition \ref{minexist}.

By Sion's min-max theorem, it holds that
\begin{align*}
&\min_{(m,w)\in X} \sup_{\phi \in C^1} \int _{\Tt^d} mH^*(x,-\frac{w}{m})+F(x,m)+w\cdot D\phi+\epsi(1-m)\phi \mbox{ }dx\\
&= \sup_{\phi \in C^1} \min_{(m,w)\in X}  \int _{\Tt^d} mH^*(x,-\frac{w}{m})+F(x,m)+w\cdot D\phi+\epsi(1-m)\phi \mbox{ }dx.
\end{align*}
By using the interchange of minimization and integration (see \cite[Theorem 14.60, p. 677]{R}), 
we obtain
\begin{align*}
&\sup_{\phi \in C^1} \inf_{(m,w)\in X}  \int _{\Tt^d} mH^*(x,-\frac{w}{m})+F(x,m)+w\cdot D\phi+\epsi(1-m)\phi \mbox{ }dx\\
&= \sup_{\phi \in C^1}   \int _{\Tt^d} \inf_{(a,b)\in \Rr \times \Rr^d} aH^*(x,-\frac{b}{a})+F(x,a)+b\cdot D\phi+\epsi(1-a)\phi \mbox{ }dx.
\end{align*}
An easy computation shows that 
\begin{equation*}
F^*(x,\epsi \phi +H(x,D\phi))= -\inf_{(a,b)\in \Rr \times \Rr^d} aH^*(x,-\frac{b}{a})+F(x,a)+b\cdot D\phi
-\epsi a\phi,
\end{equation*}
so that 
\begin{align*}
\min_{(m,w)\in K_\epsi} \mathcal{B}(m,w)
= \sup_{\phi \in C^{1}(\Tt^d)}   \int _{\Tt^d} -F^*(x,\epsi \phi +H(x,D\phi))+\epsi \phi \mbox{ }dx.
\end{align*}
\end{proof}

\subsection{Existence and uniqueness of weak solutions to discount problem}

In this section, we prove that a pair of minimizers of optimization problems \eqref{Aepsimin} and \eqref{Bmin} is a weak solution to  \eqref{DP}. 

\begin{proposition}\label{DPchar}
Let $\phi \in W^{1,pr}(\Tt^d)$ and $(m,w)\in K_\epsi$ be minimizers of \eqref{Aepsimin} and \eqref{Bmin}, respectively. Then, $(\phi,m)$ is a weak solution to \eqref{DP}.

Conversely, if $(\phi,m)$ is a weak solution to \eqref{DP}, then $\phi$  and $(m,w)$ are minimizers of \eqref{Aepsimin} and \eqref{Bmin}, respectively, where we set $w:=-mD_pH(x,D\phi)$. 
\end{proposition}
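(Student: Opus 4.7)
The strategy is to couple the duality $\min\mathcal{A}^\epsi + \min\mathcal{B} = 0$ from Proposition~\ref{prop:dual} with pointwise Fenchel--Young inequalities for the conjugate pairs $(F, F^\ast)$ and $(H, H^\ast)$. Equality in the duality for a minimizing pair will force pointwise equality in both Fenchel--Young inequalities, which is precisely the content of being a weak solution to \eqref{DP}.

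First I would prove the integration-by-parts identity
\[
\int_{\Tt^d} w \cdot D\phi \, dx = \int_{\Tt^d} \epsi(m-1)\phi \, dx
\qquad \text{for any } \phi \in W^{1,pr}(\Tt^d), \ (m, w) \in K_\epsi,
\]
by approximating $\phi$ by $C^1$ functions in \eqref{DPFP}. The pairing is legitimate because $p = q/(q-1)$ and $r' = r/(r-1)$ yield $\tfrac{r'q}{r'+q-1} = \tfrac{pr}{pr-1}$, so $w$ lies in the dual of $L^{pr}$. Setting $\alpha := \epsi\phi + H(x, D\phi)$, the two Fenchel--Young inequalities
\begin{align*}
F^\ast(x, \alpha) + F(x, m) - m\alpha &\geq 0, \\
m H^\ast(x, -w/m) + m H(x, D\phi) + w \cdot D\phi &\geq 0
\end{align*}
hold pointwise (the second with the convention that the left-hand side vanishes on $\{m=0, w=0\}$ and is $+\infty$ on $\{m=0, w\neq 0\}$, so admissibility forces $w = 0$ on $\{m = 0\}$). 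Using the identity $-\epsi m \phi = -m\alpha + m H(x, D\phi)$ together with the integration by parts, the integrated sum of the two nonnegative integrands equals exactly $\mathcal{A}^\epsi(\phi) + \mathcal{B}(m, w)$; hence $\mathcal{A}^\epsi + \mathcal{B} \geq 0$ on admissible pairs.

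If $\phi$ and $(m, w)$ are respective minimizers, Proposition~\ref{prop:dual} forces the sum to vanish, so both integrands are zero a.e. Equality in the first inequality gives $\alpha = f(x, m)$ on $\{m > 0\}$, while on $\{m = 0\}$ it only forces $\alpha \leq f(x, 0)$ via \eqref{Fstar} (since $\partial_m F(x, 0) = (-\infty, f(x, 0)]$); combined with $f(x, 0) \leq f(x, m)$, this delivers \eqref{pointwise} and \eqref{subsol}. Equality in the second gives $w = -m D_p H(x, D\phi)$ a.e., which converts \eqref{DPFP} into \eqref{FP}; the normalization $\int m \, dx = 1$ follows from \eqref{DPFP} tested against $\psi \equiv 1$, and $m D_p H(\cdot, D\phi) = -w$ is integrable, so (i)--(iii) hold. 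Conversely, given a weak solution $(\phi, m)$, set $w := -m D_p H(x, D\phi)$; H\"older combined with the growth $|D_p H(x, p)| \leq C(1 + |p|^{r-1})$ gives $w \in L^{r'q/(r'+q-1)}$, and \eqref{FP} rewrites as \eqref{DPFP}, so $(m, w) \in K_\epsi$. The weak-solution identities are exactly the equality cases of both Fenchel--Young inequalities, so $\mathcal{A}^\epsi(\phi) + \mathcal{B}(m, w) = 0$; combined with $\mathcal{A}^\epsi(\phi) \geq \min\mathcal{A}^\epsi$, $\mathcal{B}(m, w) \geq \min\mathcal{B}$ and duality, each must attain its minimum.

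The main delicate point is the bookkeeping on $\{m = 0\}$: one needs the explicit shape of $F^\ast$ in \eqref{Fstar} to identify the subdifferential of $F$ at $0$, and the $+\infty$ convention for $m H^\ast(x, -w/m)$ to force $w = 0$ where $m$ vanishes. Once these are in place, together with the $(pr)$--$(pr)'$ duality underlying the integration-by-parts step, the proof reduces to a clean pointwise duality calculation glued together by the single global identity of Proposition~\ref{prop:dual}.
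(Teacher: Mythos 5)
Your proposal is correct. The forward half (minimizers give a weak solution) is essentially the paper's own argument: the same two Fenchel--Young inequalities for $(F,F^*)$ and $(H,H^*)$, the same extension of the constraint \eqref{DPFP} to $W^{1,pr}$ test functions via $L^{\frac{r'q}{r'+q-1}}(\Tt^d)=(L^{pr}(\Tt^d))'$ (your exponent identity $\frac{r'q}{r'+q-1}=\frac{pr}{pr-1}$ checks out), and the same identification of the equality cases through $D^-_mF(x,0)=(-\infty,f(x,0)]$ and $-w/m=D_pH(x,D\phi)$. The converse is where you genuinely depart from the paper. The paper does not invoke Proposition \ref{prop:dual} there: it verifies minimality of $(m,w)$ and of $\phi$ directly against arbitrary competitors $(m',w')\in K_\epsi$ and $\phi'\in W^{1,pr}(\Tt^d)$, chaining the convexity of $F(x,\cdot)$, the weak-solution relations, the Fenchel--Young inequality for $H^*$, and \eqref{DPFP}, and, for $\phi$, the observation that $m\in D^-_aF^*(x,\epsi\phi+H(x,D\phi))$ together with the convexity of $H$. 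You instead note that the weak-solution relations \eqref{pointwise}, \eqref{subsol}, \eqref{FP} are exactly the pointwise equality cases of both Fenchel--Young inequalities, deduce $\mathcal{A}^\epsi(\phi)+\mathcal{B}(m,w)=0$, and squeeze against $\min\mathcal{A}^\epsi+\min\mathcal{B}=0$ to force each functional to attain its minimum. Your route is shorter and reuses the forward machinery, at the cost of making the converse depend on the duality theorem; the paper's competitor-by-competitor computation is self-contained and, moreover, the explicit inequality it produces (the one recorded as \eqref{u-uni}) is recycled almost verbatim in the uniqueness argument of Theorem \ref{result1}, which is a structural payoff your version forgoes. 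One point you rightly flag and that the paper leaves implicit is the integrability $w=-mD_pH(x,D\phi)\in L^{\frac{r'q}{r'+q-1}}(\Tt^d)$ needed for $(m,w)\in K_\epsi$; your derivation via H\"older and the bound $|D_pH(x,p)|\le C(1+|p|^{r-1})$ (which indeed follows from convexity and the growth bounds in (H2)) closes that gap correctly.
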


\begin{proof}
Let $\phi$ and $(m,w)$ be a minimizer of  \eqref{Aepsimin} and \eqref{Bmin}, respectively. 
In view of the duality, Proposition \ref{prop:dual}, we have 
\begin{equation*}
\int_{\Tt^d}F^*(x,\epsi \phi+H(x,D\phi))-\epsi \phi \mbox{ }dx=-\int_{\Tt^d} mH^*(x,-\frac{w}{m})+F(x,m)\mbox{ }dx.
\end{equation*}
By convexities of $F(x,\cdot)$ and $H(x,\cdot)$, we get
\begin{align}
0
&=\int_{\Tt^d} mH^*(x,-\frac{w}{m})+F(x,m)+F^*(x,\epsi \phi+H(x,D\phi))-\epsi \phi \mbox{ }dx \notag\\ 
&\geq \int_{\Tt^d} mH^*(x,-\frac{w}{m})+F(x,m)+m(\epsi \phi+H(x,D\phi))-F(x,m)-\epsi \phi \mbox{ }dx \notag\\ 
&\geq \int_{\Tt^d} -w\cdot D\phi+\epsi \phi (m-1) \mbox{ }dx. \label{kduf}
\end{align}
We now use the fact that $(m,w)$ satisfies \eqref{DPFP},  $D\phi\in L^{pr}(\Tt^d)$ and $w\in L^{\frac{r'q}{r'+q-1}}(\Tt^d)=(L^{pr}(\Tt^d))'$, 
where we denote by $X'$  the dual space of a Banach space $X$. 
They imply
\[\int_{\Tt^d} -w\cdot D\phi+\epsi \phi (m-1) \mbox{ }dx=0.\]
Hence, equalities hold in estimate \eqref{kduf}. In particular, combined with dualities of convexities of $F(x,\cdot)$ and $H(x,\cdot)$, we have
\begin{equation*}\label{FF}
F(x,m)+F^*(x,\epsi \phi+H(x,D\phi))=m(\epsi \phi + H(x,D\phi)) \quad \mathrm{a.e.} \mbox{ } \mathrm{in}\  \Tt^d,
\end{equation*}
and
\begin{equation}\label{TTT}
m\left\{ H^*(x,-\frac{w}{m})+H(x,D\phi)\right\} =-w\cdot D\phi  \quad \mathrm{a.e.} \mbox{ } \mathrm{in}\  \Tt^d.
\end{equation}
Since $F(x,\cdot)$ is strictly convex and smooth on $(0,+\infty)$, we get 
\[\epsi \phi +H(x,D\phi)= \frac{\partial F}{\partial m}=f(x,m) \quad \mbox{ a.e. in } \{m>0\}. \]
Moreover, if $x\in \{m=0\}$, we have
\[ \epsi \phi(x)+H(x,D\phi(x)) \in D^-_mF(x,0),\]
where we write $D^-_mF(x,0)$ for the subdifferential of $F(x,0)$ in $m$. Noting that $D^-_mF(x,0)=(-\infty, f(x,0)]$, we obtain \eqref{subsol}. Also, \eqref{TTT} implies
\[w(x)=-m(x)D_pH(x,D\phi) \quad \mbox{ a.e. in } \Tt^d.\]
By the duality of $H(x,\cdot)$, combined with $(m,w)\in K_\epsi$, we obtain that, for $\psi \in C^1(\Tt^d)$
\[\epsi \int_{\Tt^d} (m-1)\psi \mbox{ }dx=\int_{\Tt^d} w\cdot D\psi \mbox{ }dx=-\int_{\Tt^d} mD_pH(x,D\phi)\cdot D\psi \mbox{ }dx,\]
which yields \eqref{FP}. 

Next, let $(\phi,m)$ be a weak solution to \eqref{DP} and define $w=-mD_pH(x,D\phi)$. Then, $(m,w)$ belongs to $K_\epsi$. Moreover, 
\begin{equation}\label{HHstar}
H^*(x,-\frac{w}{m})=D\phi\cdot D_pH(x,D\phi)-H(x,D\phi).
\end{equation}
We prove that $(m,w)$ is a minimizer for \eqref{Bmin}. Let $(m',w')\in K_\epsi$.
By convexities of $F(x,\cdot)$ and $H^*(x,\cdot)$, using \eqref{HHstar}, we obtain
\begin{align*}
\mathcal{B}(m',w')
&\geq \int_{\Tt^d} m'H^*(x,-\frac{w'}{m'})+F(x,m)+f(x,m)(m'-m)\mbox{ }dx\\ 
&\geq \int_{\Tt^d} m'H^*(x,-\frac{w'}{m'})+F(x,m)+\{\epsi \phi +H(x,D\phi)\}(m'-m)\mbox{ }dx\\ 
&= \int_{\Tt^d} m'\{H^*(x,-\frac{w'}{m'})+H(x,D\phi)\}+F(x,m)-mH(x,D\phi)+\epsi \phi (m'-m)\mbox{ }dx\\ 
&\geq \int_{\Tt^d} -w'\cdot D\phi+F(x,m)-mH(x,D\phi)+\epsi \phi (m'-m)\mbox{ }dx\\ 
&=\int_{\Tt^d} -\epsi \phi(m-1)+F(x,m)-mH(x,D\phi)\mbox{ }dx\\ 
&=\int_{\Tt^d} -\epsi \phi(m-1)+F(x,m)+w\cdot D\phi+mH^*(x,-\frac{w}{m})\mbox{ }dx=\mathcal{B}(m,w). 
\end{align*}
Hence, $(m,w)$ is a minimizer for \eqref{Bmin}.

Finally, we prove that  $\phi$ is a minimizer for \eqref{Aepsimin}. Let $\phi' \in W^{1,pr}(\Tt^d)$.
Note that
\begin{align*}
F^*(x,f(x,m))=\sup_{s\in \Rr}\{ sf(x,m)-F(x,s)\}=mf(x,m)-F(x,m).
\end{align*}
If $m(x)>0$, from \eqref{pointwise} and the above equality, we get
\begin{align}\label{sub:m>0}
F^*(x,\epsi \phi +H(x,D\phi))+F(x,m)=m\{\epsi \phi+H(x,D\phi)\}. 
\end{align}
On the other hand, if $m(x)=0$, from \eqref{subsol}, for all $a\in \Rr$, we have
\begin{align}\label{sub:m=0}
F^*(x,a)-F^*(x,\epsi \phi+H(xD\phi))
&\geq F^*(x,a)-F^*(x,f(x,0))= F^*(x,a)\notag \\
&\geq 0= m(x)\{a-(\epsi \phi+H(x,D\phi))\}.
\end{align}
Thus, \eqref{sub:m>0} and \eqref{sub:m=0} implies $m \in D^-_a F^*(x,\epsi \phi+H(x,D\phi))$. Then, we obtain
\begin{align}\label{u-uni}
\mathcal{A}^\epsi(\phi') \notag
&\geq \int_{\Tt^d} F^*(x,\epsi \phi+H(x,D\phi))+m\{\epsi \phi'-\epsi \phi+H(x,D\phi')-H(x,D\phi)\}-\epsi \phi' \mbox{ }dx\\ \notag
&\geq \int_{\Tt^d} F^*(x,\epsi \phi+H(x,D\phi))+m\left\{\epsi \phi'-\epsi \phi+D_pH(x,D\phi)\cdot D(\phi'-\phi)\right\}-\epsi \phi' \mbox{ }dx\\ \notag 
&= \int_{\Tt^d} F^*(x,\epsi \phi+H(x,D\phi))+m(\epsi \phi'-\epsi \phi)-w\cdot D(\phi'-\phi)-\epsi \phi' \mbox{ }dx\\
&= \int_{\Tt^d} F^*(x,\epsi \phi+H(x,D\phi))-\epsi \phi \mbox{ }dx= \mathcal{A}^\epsi(\phi),
\end{align}
 since $(m,w) \in K_\epsi$. Hence, $\phi$ is a minimizer for \eqref{Aepsimin}.
\end{proof}

Here, we give a proof of Theorem \ref{result1}.

\begin{proof}[Proof of Theorem {\rm\ref{result1}}]
The existence of weak solutions is given by Proposition \ref{DPchar}. Here, we only prove the uniqueness. Let $(u^\epsi_1,m^\epsi_1)$ and $(u^\epsi_2,m^\epsi_2)$ be weak solutions to \eqref{DP}. It follows from the uniqueness of minimizers of \eqref{Bmin} that $m^\epsi:=m^\epsi_1=m^\epsi_2$ a.e. in $\Tt^d$.
Next, suppose that 
\[ \mathcal{L}^d\left(\{x\in \Tt^d\:|\: m^\epsi(x)>0 \mbox{ and } Du_1^\epsi(x) \neq Du_2^\epsi(x)\}\right)>0, \]
where $\mathcal{L}^d(A)$ denotes the Lebesgue measure of $A\subset \Tt^d$. Because $H(x,\cdot)$ is strictly convex, as \eqref{u-uni}, we have
\begin{align*}
\mathcal{A}^\epsi(u^\epsi_2)
&\geq \int_{\Tt^d} F^*(x,\epsi u^\epsi_1+H(x,Du^\epsi_1))+m\{\epsi u^\epsi_2-\epsi u^\epsi_1+H(x,Du^\epsi_2)-H(x,Du^\epsi_1)\}-\epsi u^\epsi_2 \mbox{ }dx\\
&> \int_{\Tt^d} F^*(x,\epsi u^\epsi_1+H(x,Du^\epsi_1))+m\{\epsi u^\epsi_2-\epsi u^\epsi_1+D_pH(x,Du^\epsi_1)\cdot D(u^\epsi_2-u^\epsi_1)\}-\epsi u^\epsi_2 \mbox{ }dx\\
&\geq \int_{\Tt^d} F^*(x,\epsi u^\epsi_1+H(x,Du^\epsi_1))-\epsi u^\epsi_1 \mbox{ }dx= \mathcal{A}^\epsi(u^\epsi_1).
\end{align*}
Hence, $u^\epsi_2$ is not a minimizer of \eqref{Aepsimin}, which contradicts the result of Proposition \ref{DPchar}. Thus, we conclude that $Du^\epsi_1=Du^\epsi_2$ a.e. in  $\{x\in \Tt^d \:| \: m^\epsi(x)>0\}$. 
In particular, by  \eqref{pointwise}, $u^\epsi_1=u^\epsi_2$ a.e. in $\{x\in \Tt^d \:| \: m^\epsi(x)>0\}$.

Next, we prove the uniqueness of $u^\epsi$. We define
\begin{equation*}
\bar u^\epsi(x) := \max\{ u^\epsi_1(x),u^\epsi_2(x)\}=\frac{1}{2} (u^\epsi_1+u^\epsi_2+|u^\epsi_1-u^\epsi_2|).
\end{equation*}
As in \cite[Lemma 7.6]{GilTru}, it follows $\bar u^\epsi \in W^{1,pr}(\Tt^d)$ with
\begin{align*}
D\bar u^\epsi = \chi_{\{ u^\epsi_1>u^\epsi_2\}} Du^\epsi_1+\chi_{\{ u^\epsi_1<u^\epsi_2\}}Du_2^\epsi+\frac{1}{2}\chi_{\{ u^\epsi_1=u^\epsi_2\}}(Du^\epsi_1+Du^\epsi_2) \quad \mathrm{a.e.} \mbox{ } \text{in}\  \Tt^d,
\end{align*}where $\chi$ is the characteristic function.
We claim that $(\bar u^\epsi,m^\epsi)$ is a  weak solution to \eqref{DP}. Indeed, because $D\bar u^\epsi=Du^\epsi_1=Du^\epsi_2$ a.e. in $\{x\in \Tt^d \:| \: m^\epsi(x)>0\}$, it is clear that \eqref{pointwise} and \eqref{FP} hold. Moreover, we can see that 
\[\epsi \bar u^\epsi+H(x,D \bar u^\epsi)\leq f(x,m^\epsi) \quad \mathrm{a.e.} \mbox{ } \text{in}\  \{u^\epsi_1 \neq u^\epsi_2\}. \]
For almost all $x  \in \{ u^\epsi_1 = u^\epsi_2\}$, by the convexity of the Hamiltonian, we get
\begin{align*}\label{ineq:ubar}
\epsi \bar u^\epsi+H(x,D \bar u^\epsi)
&\leq \frac{1}{2} (\epsi u^\epsi_1+H(x, Du^\epsi_1))+\frac{1}{2} (\epsi u^\epsi_2+H(x, Du^\epsi_2))\leq f(x,m^\epsi),
\end{align*}
which implies $(\bar u^\epsi,m^\epsi)$ is a weak solution to \eqref{DP}.

Note that $x\in\{m^\epsi=0\}$, by \eqref{subsol} and \eqref{Fstar}, we have
\begin{equation}\label{eq:m=0}
 \int_{\{m^\epsi=0\}}F^*(x,\epsi u^\epsi_1+H(x,Du^\epsi_1)) \mbox{ }dx=\int_{\{m^\epsi=0\}} F^*(x,\epsi \bar u^\epsi+H(x,D\bar u^\epsi))\mbox{ }dx .
\end{equation}
Here, we suppose that $\mathcal{L}^d(\{u^\epsi_1 \neq u^\epsi_2\})>0$. By \eqref{pointwise} and \eqref{eq:m=0}, we have
\begin{align*}
\mathcal{A}^\epsi(\bar u^\epsi)
& < \int_{\Tt^d} F^*(x, \epsi \bar u^\epsi+H(x,D\bar u^\epsi))-\epsi u^\epsi_1 \mbox{ }dx\\
&=\int_{\Tt^d} F^*(x,\epsi u^\epsi_1+H(x,Du^\epsi_1))-\epsi u^\epsi_1 \mbox{ }dx = \mathcal{A}^\epsi(u^\epsi_1),
\end{align*}
which contradicts that $u^\epsi_1$ is a minimizer of \eqref{Aepsimin}. Thus, we have $u^\epsi_1=u^\epsi_2$ on $\Tt^d$.

Finally, we assume $pr>d$. Because of the uniqueness of weak solutions, $u^\epsi$ coincides the minimizer $\bar \phi$ obtained in Proposition \ref{Aopti}, which satisfies \eqref{supersol} in the viscosity sense. 
\end{proof}

\section{weak compactness and stability}\label{sta}
In this section we prove Theorem \ref{result2}. We first recall the definition of weak solutions to \eqref{EP} introduced by \cite{CG}.

\begin{definition}\label{def:weak-erg}
  We call a triple of $(u,m,\lambda) \in W^{1,pr}(\Tt^d) \times L^q(\Tt^d)\times \Rr$  a weak solution to \eqref{EP} if 
\begin{enumerate}

\item[{\rm(i)}] $m\geq 0$ a.e. in $\Tt^d$, $\int_{\Tt^d} m\mbox{ }dx=1$ and $mD_pH(\cdot,Du) \in L^1(\Tt^d)$,

\item[{\rm(ii)}] the first equation of \eqref{EP} holds in the following sense:
\begin{equation}\label{EPpointwise}
H(x,Du)=f(x,m)+\lambda \quad \mathrm{a.e.} \mbox{ } \mathrm{in}\  \{m>0\}, 
\end{equation}
and
\begin{equation}\label{EPsubsol}
H(x,Du)\leq f(x,m)+\lambda  \quad \mathrm{a.e.} \mbox{ } \mathrm{in}\  \Tt^d,  
\end{equation}

\item[{\rm(iii)}] the second equation of \eqref{EP} holds
\begin{equation}\label{EPFP}
-\mathrm{div}(m D_pH(x, Du)) = 0 \quad \mathrm{in} \ \Tt^d, 
\end{equation}
in the sense of distribution.

\end{enumerate}
\end{definition}

\subsection{Discounted aproximations}
The existence of weak solutions to \eqref{EP} is firstly proved by \cite{CG}. In \cite{CG}, they directly prove the existence by considering the optimization problem as in Section \ref{well-pd}. In this paper, we prove it by considering weak compactness and stability for the vanishing discount problem. 

\begin{proposition}\label{compact}
Let $(u^\epsi,m^\epsi)\in W^{1,pr}(\Tt^d)\times L^q(\Tt^d)$ be the weak solution to \eqref{DP}, and  $w^\epsi=-m^\epsi D_pH(x,Du^\epsi)$. There exists a constant $C>0$ independent of $\epsi>0$ such that
\begin{equation}\label{wcom}
\|\langle u^\epsi \rangle\|_{W^{1,pr}(\Tt^d)}+\|m^\epsi\|_{L^q(\Tt^d)}+\|w^\epsi\|_{L^{\frac{r'q}{r'+q-1}}(\Tt^d;\Rr^d)}+\left| \epsi \int_{\Tt^d}u^\epsi \mbox{ }dx \right| \leq C.
\end{equation}
\end{proposition}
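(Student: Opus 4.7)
The plan is to reuse the variational machinery of Section~\ref{well-pd}, exploiting the key observation that every constant appearing in the a priori bounds of Propositions~\ref{Aopti} and \ref{minexist} can be chosen uniformly in $\epsi$ once one compares against $\epsi$-independent test objects.

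First, I would note that the constant pair $(1,0)$ lies in $K_\epsi$ for every $\epsi>0$ and that
\[
\mathcal{B}(1,0)=\int_{\Tt^d} H^*(x,0)+F(x,1)\,dx
\]
is a finite number independent of $\epsi$. The minimality of $(m^\epsi,w^\epsi)$ from Proposition~\ref{DPchar} then yields $\mathcal{B}(m^\epsi,w^\epsi)\leq C$, and feeding this into the coercive lower bounds (H1) and \eqref{Hstar} exactly as in \eqref{mqesti} and \eqref{westi} produces uniform bounds on $\|m^\epsi\|_{L^q(\Tt^d)}$ and $\|w^\epsi\|_{L^{r'q/(r'+q-1)}(\Tt^d;\Rr^d)}$.

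Next, Proposition~\ref{prop:dual} gives $\mathcal{A}^\epsi(u^\epsi)=-\mathcal{B}(m^\epsi,w^\epsi)\geq -C$, while testing $\mathcal{A}^\epsi$ against $\phi\equiv 0$ yields $\mathcal{A}^\epsi(u^\epsi)\leq \mathcal{A}^\epsi(0)=\int_{\Tt^d}F^*(x,H(x,0))\,dx\leq C$, with a constant independent of $\epsi$ because $H(\cdot,0)$ is bounded on $\Tt^d$ and $F^*$ is nondecreasing with $p$-growth. Substituting $u^\epsi$ into the Jensen-type estimate \eqref{Aepsibound} then bounds $|\epsi\int_{\Tt^d}u^\epsi\,dx|$ in terms of the already controlled $\mathcal{A}^\epsi(u^\epsi)$; since $p>1$, this forces $|\epsi\int_{\Tt^d}u^\epsi\,dx|\leq C$. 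With this in hand, I would rerun the gradient chain \eqref{Dphiesti} with $\phi=u^\epsi$, splitting $u^\epsi=\langle u^\epsi\rangle+\int_{\Tt^d}u^\epsi\,dx$, invoking Poincar\'e--Wirtinger on $\langle u^\epsi\rangle$, and using Young's inequality to absorb the lower-order term; this delivers $\|Du^\epsi\|_{L^{pr}(\Tt^d)}\leq C$, and a final Poincar\'e--Wirtinger step gives $\|\langle u^\epsi\rangle\|_{W^{1,pr}(\Tt^d)}\leq C$.

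I do not anticipate a serious obstacle: the estimates of Section~\ref{well-pd} are already phrased purely in terms of coercivity constants from (H1)--(H2) and values of the functionals at fixed, $\epsi$-independent competitors, so they transfer verbatim. The only point worth double-checking is that the Young-type absorption step in \eqref{Dphiesti} remains effective as $\epsi\to 0$; this is automatic because the term to be absorbed carries the small factor $\epsi^p$ and, if anything, becomes easier to handle in the vanishing discount limit.
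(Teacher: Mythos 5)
Your proposal is correct and follows essentially the same route as the paper: the paper's proof simply reruns the estimates \eqref{Aepsibound}, \eqref{Dphiesti}, \eqref{mqesti} and \eqref{westi} for the minimizers $(u^\epsi)$ and $(m^\epsi,w^\epsi)$, exactly as you do. Your write-up additionally makes explicit why the constants are uniform in $\epsi$ (comparison with the fixed competitors $(1,0)\in K_\epsi$ and $\phi\equiv 0$, plus the duality of Proposition \ref{prop:dual} and the harmless $\epsi^p$ factor in the absorption step), which is left implicit in the paper but is precisely the content of its proof.
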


\begin{proof}
At first, we can see that $|\epsi \int u^\epsi dx|$ is bounded as \eqref{Aepsibound}.
Next, as in \eqref{Dphiesti}, we get the bound for $\|Du^\epsi\|_{L^{pr}}$. It follows from  the Poincare-Wirtinger inequality that $\|\langle u^\epsi \rangle\|_{W^{1,pr}}$ is bounded uniformly in $\epsi$.

Finally, using \eqref{mqesti} and \eqref{westi}, we have 
\begin{equation*}
\|m^\epsi\|_{L^q(\Tt^d)}+\|w^\epsi\|_{L^{\frac{r'q}{r'+q-1}}(\Tt^d;\Rr^d)} \leq C,
\end{equation*}which gives \eqref{wcom}.
\end{proof}

Finally, we study the stability of weak solutions using  $\Gamma$-convergence type arguments. To prove Theorem \ref{result2}, we introduce several notations. We define  $\mathcal{A}: W^{1,pr}(\Tt^d)\times \Rr \to \Rr$ by
\begin{equation*}
\mathcal{A}(\phi,\lambda):=\int_{\Tt^d} F^*(x, -\lambda+H(x,D\phi))+\lambda \mbox{ }dx.
\end{equation*}

Let $K$ be the set of pairs $(m,w) \in L^q(\Tt^d)\times   L^{\frac{r'q}{r'+q-1}}(\Tt^d;\Rr^d)$ such that $m\geq 0$ a.e. in $\Tt^d$, $\int _{\Tt^d}m=1$, and satisfies
\begin{equation*}
\div(w)=0,
\end{equation*}
in the sense of distribution. Next, we prove the following two lemma.

\begin{lemma}\label{Asup}
It holds that
\begin{equation*}\label{Asup}
\limsup_{\epsi \to 0} \min_{\phi \in W^{1,pr}(\Tt^d)}\mathcal{A}^\epsi(\phi)\leq  \inf_{(\phi,c) \in W^{1,pr}(\Tt^d)\times \Rr}\mathcal{A}(\phi,c).
\end{equation*}
\end{lemma}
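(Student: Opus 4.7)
The plan is to establish the $\limsup$ bound via a recovery sequence argument of $\Gamma$-limsup type. Given any competitor $(\phi,c)\in W^{1,pr}(\Tt^d)\times\Rr$ for the right-hand side, I will construct a family $\phi^\epsi \in W^{1,pr}(\Tt^d)$ such that $\mathcal{A}^\epsi(\phi^\epsi)\to \mathcal{A}(\phi,c)$ as $\epsi\to 0$; taking infimum over $(\phi,c)$ then yields the desired inequality.

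The natural choice is $\phi^\epsi := \phi - c/\epsi$, which absorbs the discount term into the constant $c$. A direct substitution gives
\[
\mathcal{A}^\epsi(\phi^\epsi)=\int_{\Tt^d}F^*\bigl(x,\epsi\phi-c+H(x,D\phi)\bigr)\,dx-\int_{\Tt^d}\epsi\phi\,dx+c.
\]
Pointwise a.e., $\epsi\phi(x)-c+H(x,D\phi(x))\to -c+H(x,D\phi(x))$, and by continuity of $F^*$ the integrand converges a.e.\ to $F^*(x,-c+H(x,D\phi))$. The second integral clearly vanishes. So the convergence reduces to justifying passage to the limit in the first integral.

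For the dominated convergence argument, I would use the growth $F^*(x,a)\le C|a|^p+C$ together with $H(x,D\phi)\le C|D\phi|^r+C$ to bound, for $\epsi\in(0,1)$,
\[
\bigl|F^*(x,\epsi\phi-c+H(x,D\phi))\bigr|\le C\bigl(|\phi|^p+|c|^p+|D\phi|^{rp}+1\bigr).
\]
Since $\phi\in W^{1,pr}(\Tt^d)\hookrightarrow L^{pr}(\Tt^d)\subset L^p(\Tt^d)$ and $D\phi\in L^{pr}(\Tt^d)$, the right-hand side is an $L^1$ majorant independent of $\epsi$. Dominated convergence then gives
\[
\int_{\Tt^d}F^*\bigl(x,\epsi\phi-c+H(x,D\phi)\bigr)\,dx\longrightarrow\int_{\Tt^d}F^*\bigl(x,-c+H(x,D\phi)\bigr)\,dx,
\]
and $\int_{\Tt^d}\epsi\phi\,dx\to 0$, yielding $\mathcal{A}^\epsi(\phi^\epsi)\to\mathcal{A}(\phi,c)$. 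Hence
\[
\limsup_{\epsi\to 0}\min_{\psi\in W^{1,pr}(\Tt^d)}\mathcal{A}^\epsi(\psi)\le\limsup_{\epsi\to 0}\mathcal{A}^\epsi(\phi^\epsi)=\mathcal{A}(\phi,c),
\]
and taking the infimum over $(\phi,c)$ concludes the proof.

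The argument is essentially routine; no real obstacle arises, the main subtlety being just the verification of the $L^1$ domination for dominated convergence, which is immediate from (H1), (H2) and Sobolev embedding on the torus. Note that the construction requires $c$ to enter through an additive constant, which is exactly what the shift by $-c/\epsi$ accomplishes while keeping $\phi^\epsi\in W^{1,pr}(\Tt^d)$.
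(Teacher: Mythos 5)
Your proof is correct, and its core construction is exactly the paper's: the recovery function $\phi^\epsi:=\phi-c/\epsi$, followed by $\min\mathcal{A}^\epsi\le\mathcal{A}^\epsi(\phi^\epsi)$ and a passage to the limit. The one genuine difference is the class of competitors: the paper takes $(\phi,c)\in C^1(\Tt^d)\times\Rr$, so that $\epsi\phi\to0$ uniformly and the limit in the integral is immediate, and then concludes by taking the infimum --- which, strictly speaking, only bounds the left-hand side by $\inf_{C^1\times\Rr}\mathcal{A}$ and thus implicitly relies on the (standard, but unproved there) density fact that this infimum coincides with $\inf_{W^{1,pr}\times\Rr}\mathcal{A}$, analogous to the identity $\inf_{W^{1,pr}}\mathcal{A}^\epsi=\inf_{C^1}\mathcal{A}^\epsi$ invoked in Proposition \ref{Aopti}. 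You instead admit arbitrary $(\phi,c)\in W^{1,pr}(\Tt^d)\times\Rr$ and justify the limit by dominated convergence with the majorant $C\left(|\phi|^p+|c|^p+|D\phi|^{rp}+1\right)\in L^1(\Tt^d)$; this costs a little more work but yields the stated inequality directly, with no approximation step left implicit. One small point worth recording in your write-up: the growth bound $F^*(x,a)\le C|a|^p+C$ is stated in (H1) only for $a\ge0$, but it extends to all $a\in\Rr$ because $F^*(x,\cdot)$ is nonnegative and nondecreasing (so $F^*(x,a)\le F^*(x,|a|)$), which is what your domination actually uses, since $\epsi\phi-c+H(x,D\phi)$ need not be nonnegative.
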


\begin{proof}
Take any $(\phi,c)\in C^1(\Tt^d)\times \Rr$. Set $\phi^\epsi:=\phi-c/\epsi$. Then, we get
\begin{align*}
\limsup_{\epsi \to 0} \inf_{\phi \in W^{1,pr}(\Tt^d)}\mathcal{A}^\epsi(\phi)
&\leq \limsup_{\epsi \to 0}  \int_{\Tt^d} F^*(x,\epsi \phi^\epsi+H(x,D\phi^\epsi))-\epsi \phi^\epsi \mbox{ }dx\\
&= \limsup_{\epsi \to 0}  \int_{\Tt^d} F^*(x,\epsi \phi-c+H(x,D\phi))-\epsi \phi+c \mbox{ }dx\\
&\leq \int_{\Tt^d} F^*(x,-c+H(x,D\phi))+c \mbox{ }dx=\mathcal{A}(\phi,c).
\end{align*}
Taking the infimum on $(\phi,c)$ yields the conclusion.
\end{proof}

\begin{lemma}\label{limsupB}
Assume either $q\ge d$ or $r'\leq \frac{qd}{d-q}$ if $q<d$. Then, it holds that
\begin{equation}\label{Bsup}
\limsup_{\epsi \to 0} \min_{(m,w) \in K_\epsi}\mathcal{B}(m,w)\leq  \inf_{(m,w) \in K}\mathcal{B}(m,w).
\end{equation}
\end{lemma}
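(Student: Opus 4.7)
The plan is to fix $(m,w) \in K$ with $\mathcal{B}(m,w)<\infty$ (the case $\mathcal{B}(m,w)=\infty$ is trivial) and construct a family $(m_\delta, w^\epsi_\delta) \in K_\epsi$, depending on an auxiliary parameter $\delta>0$, such that a diagonal choice $\delta=\delta(\epsi)\to 0$ gives $\limsup_{\epsi\to 0} \mathcal{B}(m_\delta, w^\epsi_\delta) \le \mathcal{B}(m,w)$. Taking the infimum over $K$ then yields \eqref{Bsup}.

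First I would regularize by a convex combination with the stationary pair $(1,0)\in K$: set $m_\delta := (1-\delta)m+\delta$ and $w_\delta := (1-\delta)w$. Then $m_\delta\ge\delta>0$, $\int m_\delta\,dx=1$, $\mathrm{div}(w_\delta)=0$, and by convexity of $\mathcal{B}$,
\[
\mathcal{B}(m_\delta,w_\delta)\le (1-\delta)\mathcal{B}(m,w)+\delta\,\mathcal{B}(1,0),
\]
so $\limsup_{\delta\to 0}\mathcal{B}(m_\delta,w_\delta)\le \mathcal{B}(m,w)$ since $\mathcal{B}(1,0)<\infty$. Next, since $\int_{\Tt^d}(1-m_\delta)\,dx=0$, the Poisson problem $\Delta\xi_\delta=1-m_\delta$ admits a solution $\xi_\delta\in W^{2,q}(\Tt^d)$ by elliptic regularity. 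Setting $w^\epsi_\delta:=w_\delta+\epsi\nabla\xi_\delta$, one checks that for every $\psi\in C^1(\Tt^d)$,
\[
\int_{\Tt^d}\nabla\psi\cdot w^\epsi_\delta\,dx=\epsi\int_{\Tt^d}\nabla\psi\cdot\nabla\xi_\delta\,dx=\epsi\int_{\Tt^d}(m_\delta-1)\psi\,dx,
\]
so that $\epsi m_\delta+\mathrm{div}(w^\epsi_\delta)=\epsi$ in $\mathcal{D}'(\Tt^d)$, i.e.\ $(m_\delta, w^\epsi_\delta)\in K_\epsi$. Moreover, the Sobolev embedding of $W^{1,q}(\Tt^d)$ into $L^{r'}(\Tt^d)$ (which holds precisely under the assumed dichotomy $q\ge d$ or $r'\le qd/(d-q)$) gives $\nabla\xi_\delta\in L^{r'}(\Tt^d)$.

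The core step is then to show $\mathcal{B}(m_\delta, w^\epsi_\delta)\to\mathcal{B}(m_\delta, w_\delta)$ as $\epsi\to 0$ for fixed $\delta>0$ by dominated convergence. Pointwise convergence of the integrand is clear because $m_\delta\ge\delta$ and $H^*(x,\cdot)$ is continuous. For an integrable dominant, use \eqref{Hstar} and the elementary bound $(a+b)^{r'}\le 2^{r'-1}(a^{r'}+b^{r'})$ to estimate, for $\epsi\le 1$,
\[
m_\delta H^*\!\bigl(x,-w^\epsi_\delta/m_\delta\bigr)\le C\,m_\delta+\frac{C\,|w_\delta|^{r'}}{m_\delta^{r'-1}}+\frac{C\,|\nabla\xi_\delta|^{r'}}{\delta^{r'-1}}.
\]
The first two terms are in $L^1$ because $\mathcal{B}(m_\delta,w_\delta)<\infty$ (the second via the coercivity half of \eqref{Hstar}), and the third is in $L^1$ precisely by the Sobolev embedding above. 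Combining with the bound on $F(x,m_\delta)$ from (H1) gives the required dominant uniformly in $\epsi$.

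Finally, I would close the argument by a diagonal selection: given $\eta>0$, pick $\delta$ with $\mathcal{B}(m_\delta,w_\delta)\le\mathcal{B}(m,w)+\eta$, then pick $\epsi_0=\epsi_0(\delta)$ so that $\mathcal{B}(m_\delta,w^\epsi_\delta)\le \mathcal{B}(m_\delta,w_\delta)+\eta$ for $\epsi<\epsi_0$. Since $(m_\delta,w^\epsi_\delta)\in K_\epsi$, one concludes $\min_{K_\epsi}\mathcal{B}\le \mathcal{B}(m,w)+2\eta$ for all small $\epsi$, and letting $\eta\to 0$ then taking the infimum over $(m,w)\in K$ gives \eqref{Bsup}. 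The main obstacle is the construction and control of the elliptic corrector $\epsi\nabla\xi_\delta$: it is forced by the discounted divergence constraint, and producing an $L^1$-dominant for the $H^*$-term is exactly what pins down the hypothesis $q\ge d$ or $r'\le qd/(d-q)$.
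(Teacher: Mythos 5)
Your proof is correct and follows essentially the same route as the paper's: mix with the stationary pair $(1,0)$, introduce a Poisson-equation corrector ($\epsi\nabla\xi$, versus the paper's $-\epsi(\epsi-1)Dv_n$ with $-\Delta v_n=m_n-1$) to satisfy the discounted constraint, invoke the Sobolev embedding $W^{1,q}\hookrightarrow L^{r'}$ under exactly the stated dichotomy, and pass to the limit by dominated convergence. The only cosmetic difference is that the paper ties the mixing parameter to $\epsi$ itself, writing $m^\epsi_n=\epsi+(1-\epsi)m_n$ along a minimizing sequence and concluding in one pass via Fatou's lemma, whereas you keep $\delta$ independent of $\epsi$ and close with a convexity estimate plus a diagonal selection.
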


\begin{proof}
We only need to consider the case where $\inf_{K}\mathcal{B}<+\infty$. 
Take any $(m_n, w_n)\in K$ so that $\mathcal{B}( m_n, w_n)\to \inf_{K}\mathcal{B}$ as $n\to\infty$. 
Consider 
\begin{equation*}\label{poisson}
-\Delta v_n= m_n-1 \quad \mbox{in } \Tt^d.
\end{equation*}
Since $m_n\in L^q(\Tt^d)$, we have $v_n\in W^{2,q}(\Tt^d)$. In particular, $v_n$ satisfies
\[ 
\int_{\Tt^d} D\psi \cdot D v_n \mbox{ }dx=\int_{\Tt^d} (m_n-1) \psi \mbox{ }dx, \quad \mbox{for all } \psi \in C^1(\Tt^d).  
\]
Set
\[
m^\epsi_n:= \epsi+(1-\epsi)m_n, \quad \mbox{and}\quad  w^\epsi_n:=w_n-\epsi(\epsi-1)Dv_n.
\]
Note that if $q\ge d$ then we can easily check that 
$(m^\epsi_n,w^\epsi_n)\in L^q(\Tt^d)\times L^{\frac{r'q}{r'+q-1}}(\Tt^d)$. 
If $q<d$ then since we assume $r'\leq \frac{qd}{d-q}$, we have 
\[
\frac{r'q}{r'+q-1}\le \frac{q^2d}{(d-q)(r'+q-1)}<\frac{qd}{d-q}.   
\]
By the Sobolev inequality we get  
$w^\epsi_n\in L^{\frac{r'q}{r'+q-1}}(\Tt^d)$. 
Moreover, 
it is easy to check that 
\begin{align*}
\int_{\Tt^d} w^\epsi_n \cdot D\psi \mbox{ }dx
&=\int_{\Tt^d}  w_n\cdot D\psi-\epsi(\epsi-1)D\psi \cdot Dv_n \mbox{ }dx\\
&=\int_{\Tt^d} -\epsi(\epsi-1)( m_n-1)\psi \mbox{ }dx=\int_{\Tt^d} \epsi(m^\epsi_n-1)\psi\mbox{ }dx 
\end{align*}
for all $\psi \in C^1(\Tt^d)$, 
which implies $(m^\epsi_n, w^\epsi_n) \in K_\epsi$.


Note that $F(x,m^\epsi_n)\le C(|m_n|^q+1)\in L^1(\Tt^d)$, and 
\begin{align}\label{ineq:L1}
m^\epsi_n H^*\left(x,-\frac{ w^\epsi_n}{ m^\epsi_n}\right) 
\leq C \left(\frac{|w_n-\epsi(\epsi-1)Dv_n|^{r'}}{|\epsi+(1-\epsi)m_n|^{r'-1}}+ m^\epsi_n\right)
\leq C \left( m_n \left|\frac{w_n}{m_n}\right|^{r'}+|Dv_n|^{r'}+  m_n \right).  
\end{align}
By a similar argument to Proposition \ref{minexist} and using assumption $r'\leq \frac{qd}{d-q}$, we can see that the right hand side of \eqref{ineq:L1} is in $L^1(\Tt^d)$.

By the Fatou lemma, we obtain 
\begin{align*}
\limsup_{\epsi \to 0} \min_{(m,w) \in K_\epsi}\mathcal{B}(m,w)
&\leq  \limsup_{\epsi \to 0} \int_{\Tt^d}  m^\epsi_n H^*(x,-\frac{ w^\epsi_n}{ m^\epsi_n})+F(x, m^\epsi_n)\mbox{ }dx\\
&\leq \int_{\Tt^d}  m_n H^*(x,-\frac{ w_n}{m_n})+F(x,m_n)\mbox{ }dx =\mathcal{B}(m_n, w_n).
\end{align*}
Sending $n\to\infty$ yields the conclusion. 
\end{proof}

We are ready to prove Theorem \ref{result2}.

\begin{proof}[Proof of Theorem {\rm\ref{result2}}]
By Lemmas \ref{Asup}, \ref{limsupB} and the duality on $\mathcal{A}$ and $\mathcal{B}$ by \cite[Lemma 4.3]{CG}, we have
\begin{align*}
&\limsup_{\epsi \to 0} \inf_{\phi \in W^{1,pr}(\Tt^d)}\mathcal{A}^\epsi(\phi)
\leq  \inf_{(\phi,c) \in W^{1,pr}(\Tt^d)\times \Rr}\mathcal{A}(\phi,c)=-\inf_{(m,w)\in K} \mathcal{B}(m,w)\\
&\leq -\limsup_{\epsi \to 0} \inf_{(m,w) \in K_\epsi} \mathcal{B}(m,w)
=\liminf_{\epsi \to 0} \left(-\inf_{(m,w) \in K_\epsi} \mathcal{B}(m,w) \right)
=\liminf_{\epsi \to 0} \inf_{\phi \in W^{1,pr}(\Tt^d)}\mathcal{A}^\epsi(\phi).
\end{align*}
Thus, equalities hold in the above estimates, which implies
\begin{align*}
\lim_{\epsi \to 0} \inf_{\phi \in W^{1,pr}(\Tt^d)}\mathcal{A}^\epsi(\phi)&= \inf_{(\phi,c) \in W^{1,pr}(\Tt^d)\times \Rr}\mathcal{A}(\phi,c)\\
&=-\inf_{(m,w)\in K} \mathcal{B}(m,w)=-\lim_{\epsi \to 0} \inf_{(m,w) \in K_\epsi}\mathcal{B}(m,w).
\end{align*}

Let $(m^\epsi,w^\epsi)$ be the minimizer of \eqref{Bmin}. By Proposition \ref{compact}, with choosing subesequences, $(m^\epsi,w^\epsi)$ converges weakly in $L^q(\Tt^d) \times   L^{\frac{r'q}{r'+q-1}}(\Tt^d;\Rr^d) $ for some $(m,w)\in K$. Then, we observe
\[\inf_{(m,w) \in K} \mathcal{B}(m,w)= \lim_{\epsi \to 0} \inf_{(m,w)\in K_\epsi} \mathcal{B}(m,w)=\liminf_{\epsi \to 0} \mathcal{B}(m^\epsi,w^\epsi)\geq \mathcal{B}(m,w),\]
which implies that $(m,w)$ minimizes $\mathcal{B}$ among $K$.

On the other hand, let $u^\epsi$ be the minimizer of \eqref{Aepsi}. In view of Proposition \ref{compact},  $\langle u^\epsi \rangle \rightharpoonup u$ weakly in $W^{1,pr}(\Tt^d)$ and $\int \epsi u^\epsi \to -\lambda$ as $\varepsilon\to0$ along subsequences. Similarly, we can see that $(u,\lambda)$ minimize $\mathcal{A}$ among $W^{1,pr}(\Tt^d) \times \Rr$. By \cite[Proposition 4.5]{CG}, $(u,m,\lambda)$ is a weak solution to \eqref{EP}.

Finally, if we assume $pr>d$,
by the Rellich-Kondrachov compact embedding theorem, $\langle u^\epsi \rangle \to u$ uniformly on $\Tt^d$. In view of the stability of viscosity solutions and \eqref{supersol}, $u$ satisfies \eqref{EPsupersol} in the viscosity sense. 
\end{proof}

\subsection{Continuity of density $m$}\label{sec:m-conti}
In this section, we give some conditions to obtain the continuity of $m$. 
\begin{proposition}
Let $d=1$. Assume that there exists $\tilde{J}:[0,\infty) \to \Rr$ and $c_0>0$ such that 
\begin{equation}\label{exF}
 F(x,m)-F^*(x,a)\geq ma+c_0|m-\tilde{J}(a)|^2 \quad \text{for all} \  m,a\in [0,\infty) \mbox{ and } x\in \Tt.
\end{equation}
Then, we have 
\begin{equation}\label{esti:sobolev}
m\in H^1(\Tt)\subset C(\Tt). 
\end{equation}
 Furthermore, assume that $H^*$ and $F$ are twice differentiable in the first variable and 
\begin{equation*}\label{HxxFxx}
H^*_{xx}(x,p) \leq C (|p|^{r'}+1) \quad \mbox{ and }\quad F_{xx}(x,m) \leq C (|m|^q+1),
\end{equation*}
for all $(x,p)\in \Tt \times \Rr$ and $m \geq0$. 
Then, we have 
\begin{equation}\label{esti:m-ep}
\|m^\epsi\|_{H^1(\Tt)}\le C 
\end{equation}
for some $C\ge0$, which is independent of $\epsi$. 
\end{proposition}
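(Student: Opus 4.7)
The overall plan is a difference-quotient argument in one dimension driven by the strong-convexity bound \eqref{exF}, applied to the primal--dual optimality conditions from Proposition \ref{DPchar}. Set $a^\epsi(x):=\epsi u^\epsi(x)+H(x,Du^\epsi(x))$. By Proposition \ref{DPchar} and \eqref{Fstar} (together with \eqref{subsol}, which gives $a^\epsi(x)\le f(x,0)$ a.e.\ on $\{m^\epsi=0\}$), one has the Fenchel equality
\[
F(x,m^\epsi(x))+F^*(x,a^\epsi(x))=m^\epsi(x)\,a^\epsi(x)\qquad\text{a.e.\ in }\Tt,
\]
and on $\{m^\epsi>0\}$ the identification $m^\epsi(x)=\tilde J(a^\epsi(x))$. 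Given $h\in\Rr$ small, the plan is to apply \eqref{exF} at the point $x$ to the pair $(m^\epsi(x+h),a^\epsi(x))$ and at the point $x+h$ to the pair $(m^\epsi(x),a^\epsi(x+h))$, then subtract the two Fenchel equalities at $x$ and $x+h$ and sum. After using $\tilde J(a^\epsi(x))=m^\epsi(x)$ on $\{m^\epsi>0\}$, this will yield
\[
2c_0\int_{\Tt}|m^\epsi(x+h)-m^\epsi(x)|^2\,dx\le -\int_{\Tt}\bigl(m^\epsi(x+h)-m^\epsi(x)\bigr)\bigl(a^\epsi(x+h)-a^\epsi(x)\bigr)\,dx+R(h),
\]
where $R(h)$ collects contributions from the $x$-variation of $F$ and $F^*$ of the form $\int_\Tt\int_x^{x+h}[\partial_y F(y,m^\epsi(x))-\partial_y F(y,m^\epsi(x+h))]\,dy\,dx$ (and analogously for $F^*$).

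The cross integral is controlled via the one-dimensional structure of the continuity equation \eqref{FP}, which reads $\epsi m^\epsi-(w^\epsi)'=\epsi$ and therefore gives $w^\epsi\in W^{1,q}(\Tt)\hookrightarrow C^{0,1-1/q}(\Tt)$, together with the identity $w^\epsi=-m^\epsi D_pH(\cdot,Du^\epsi)$. Expanding $a^\epsi=\epsi u^\epsi+H(x,Du^\epsi)$, the $\epsi u^\epsi$ piece contributes $O(h^2)$ because of the uniform $W^{1,pr}$ bound on $\langle u^\epsi\rangle$ from Proposition \ref{compact}. For the Hamiltonian piece, the convexity inequality $H(y,q)-H(y,p)\ge D_pH(y,p)\cdot(q-p)$ applied symmetrically at $x$ and $x+h$, combined with an integration by parts against $w^\epsi$ and the $O(|h|^{1-1/q})$ oscillation of $w^\epsi$, reduces the cross term to a bilinear pairing already controlled by $\|m^\epsi\|_{L^q}$ and $\|Du^\epsi\|_{L^{pr}}$, producing an $O(h^2)$ estimate. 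Together with a rough $o(1)$ bound on $R(h)$ using continuity of $\partial_yF$ and $\partial_yF^*$ in $y$, this gives $\|m^\epsi(\cdot+h)-m^\epsi\|_{L^2(\Tt)}\le C_\epsi|h|$, whence $m^\epsi\in H^1(\Tt)$; passing to the limit $\epsi\to 0$ along the subsequence in Theorem \ref{result2} and using weak lower semicontinuity of the $H^1$ seminorm then transfers this regularity to the ergodic $m$, proving \eqref{esti:sobolev} via the embedding $H^1(\Tt)\hookrightarrow C(\Tt)$.

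For the uniform bound \eqref{esti:m-ep}, the point is that $R(h)$ must be $O(h^2)$ with a constant independent of $\epsi$. Under the additional twice-differentiability hypotheses, a second-order Taylor expansion in the first variable of $F$ and $F^*$, combined with the growth bounds $H^*_{xx}(x,p)\le C(|p|^{r'}+1)$ and $F_{xx}(x,m)\le C(|m|^q+1)$, yields
\[
|R(h)|\le Ch^2\int_{\Tt}\bigl(|Du^\epsi|^{pr}+|m^\epsi|^q+1\bigr)\,dx,
\]
and the right-hand side is bounded uniformly in $\epsi$ by Proposition \ref{compact}. Dividing the main inequality by $h^2$ and letting $h\to 0$ then gives $\|m^\epsi\|_{H^1(\Tt)}\le C$ independently of $\epsi$. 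The main obstacle will be the careful bookkeeping of the cross-term pairing, in particular correctly handling the set $\{m^\epsi=0\}$ where the pointwise Fenchel equality must be established through \eqref{subsol} and $F^*(x,a)=0$ for $a\le f(x,0)$, and verifying that the implicit constants in both the cross term and $R(h)$ depend only on the uniform estimates of Proposition \ref{compact} and not on $\epsi$.
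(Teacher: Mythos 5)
Your overall strategy (translations combined with the strong convexity \eqref{exF} acting through the Fenchel structure) is the right one in spirit, but the two estimates on which everything hinges are not established, and the reasons you give for them are quantitatively insufficient. Writing $\Delta m=m^\epsi(\cdot+h)-m^\epsi$, $\Delta a=a^\epsi(\cdot+h)-a^\epsi$, etc., your cross-term bounds fail as follows: H\"older together with the uniform $W^{1,pr}$ bound gives only $|\epsi\int_{\Tt}\Delta m\,\Delta u^\epsi\,dx|\le C\epsi|h|$, and the oscillation of $w^\epsi$ (which from $(w^\epsi)'=\epsi(m^\epsi-1)$ is of size $C\epsi|h|^{1-1/q}$) gives only $O(\epsi|h|^{1-1/q})$ for the Hamiltonian piece; both are far larger than the required $O(h^2)$ as $h\to0$ for fixed $\epsi$, so the conclusion $\|\Delta m\|_{L^2(\Tt)}\le C|h|$ does not follow from your argument. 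The point you are missing is that these first-order contributions must not be estimated at all: they cancel exactly, for variational reasons. In the paper's proof (Appendix, Proposition \ref{H1esti}) this happens in two places: the discount terms $-\epsi u+\epsi um$ are annihilated exactly by the constraint $(m,w)\in K_\epsi$ paired against $Du$ (Lemma \ref{Jesit}, combined with Proposition \ref{prop:dual}, which makes the duality gap at the optimal pair equal to zero), and the first-order-in-$h$ terms vanish because $h=0$ is a minimum of $h\mapsto B^\epsi(h)=\int_{\Tt} m^\epsi H^*(x+h,-w^\epsi/m^\epsi)+F(x+h,m^\epsi)\,dx$ by translation invariance of $K_\epsi$, i.e. $DB^\epsi(0)=0$ (Lemma \ref{Bhesti}). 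Only after these cancellations do the hypotheses on $H^*_{xx}$ and $F_{xx}$ enter, through a second-order Taylor remainder bounded by $\int_{\Tt} m^\epsi|w^\epsi/m^\epsi|^{r'}+|m^\epsi|^q+1\,dx$, which is uniform in $\epsi$ by Proposition \ref{compact}. Your pointwise version could in principle reproduce these cancellations (for instance $\epsi\int_{\Tt}\Delta m\,\Delta u^\epsi\,dx=-\int_{\Tt}\Delta w\cdot\Delta (Du^\epsi)\,dx$ by the Fokker--Planck equation, which must then be combined with the convexity of $H$ and the identity $w^\epsi=-m^\epsi D_pH(\cdot,Du^\epsi)$), but as written it replaces them by crude bounds that are too weak at exactly the critical step.

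The first assertion, $m\in H^1(\Tt)\subset C(\Tt)$, is mishandled in two independent ways. First, an ``$o(1)$ bound on $R(h)$'' is useless: to divide by $h^2$ and let $h\to0$ you need $R(h)=O(h^2)$, and an $o(1)$ bound only yields $\|\Delta m\|_{L^2}=o(1)$, which is true for every $L^2$ function. Second, even granting an $\epsi$-dependent bound $\|m^\epsi\|_{H^1(\Tt)}\le C_\epsi$, you cannot ``pass to the limit by weak lower semicontinuity'': lower semicontinuity transfers only bounds that are uniform along the subsequence, and under the hypotheses of the first part (where no growth conditions on $H^*_{xx}$, $F_{xx}$ are assumed) no uniform bound is available, so your route collapses the first part into the second. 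The paper instead proves the first assertion by applying the duality--translation argument directly to the ergodic variational problem, citing \cite[Theorem 3.4]{GM}, with no limiting procedure in $\epsi$. Two minor points: the cross term should enter with a plus sign, $2c_0\int_{\Tt}|\Delta m|^2\,dx\le\int_{\Tt}\Delta m\,\Delta a\,dx+R(h)$, not a minus sign; and \eqref{exF} must be read as \eqref{hypJ}, i.e. with $+F^*(x,a)$ on the left-hand side, which your use of the Fenchel equality implicitly assumes.
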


\begin{proof}
The continuity of $m$, that is \eqref{esti:sobolev}, is due to \cite[Theorem 3.4]{GM}. 
Estimate \eqref{esti:m-ep} is a straightforward result of a Sobolev estimate in Appendix, 
Proposition \ref{H1esti}. 
\end{proof}

For instance, if we have $F(x,m)=F(m)$ and $F''(m)\geq c_0>0$ for all $(x,m)\in\Tt\times[0,\infty)$, 
then taking $\tilde{J}(a):=(F^*)'(a)=f^{-1}(a)$, we can check that \eqref{exF} holds. 
See \cite[Section 3]{Santambrogio} for more details. 

\begin{remark}
We point out here that due to \eqref{esti:m-ep} and the uniqueness of $m$ we have 
\begin{equation*}
m^\epsi\to m\quad\text{in} \ C(\Tt) \quad\text{as} \ \epsi\to0, 
\end{equation*}
which is independently of interest. 
\end{remark}

In a higher dimension, we give another type of results.  
\begin{proposition}\label{m:const}
Let $c\in \Rr$ and assume that $(c,m,\lambda)$ is a weak solution to ergodic problem \eqref{EP}. Then, $m\in C(\Tt^d)$.
\end{proposition}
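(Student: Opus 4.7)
The plan is to exploit the fact that since $u \equiv c$ is constant, $Du = 0$ almost everywhere, so the first equation of \eqref{EP} reduces to a pointwise algebraic relation between $m(x)$ and $x$. Inverting $f(x,\cdot)$ then yields an explicit continuous formula for $m$. Concretely, I will construct a function $G \in C(\Tt^d)$ and show that $m = G$ a.e., whence $m$ admits a continuous representative.

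Set $g(x) := H(x,0) - \lambda$, which is continuous on $\Tt^d$ by (H2), and $a(x) := f(x,0)$, which is continuous by (H1) and satisfies $a(x) \leq 0$. Since $f(x,\cdot)$ is a continuous strictly increasing bijection from $[0,\infty)$ onto $[a(x), \infty)$, its inverse $f^{-1}(x, \cdot) \colon [a(x), \infty) \to [0,\infty)$ is well-defined. A short compactness argument using joint continuity and strict monotonicity of $f$ shows that $(x,y) \mapsto f^{-1}(x,y)$ is jointly continuous on the closed set $\{(x,y) : y \geq a(x)\}$: if $(x_n, y_n) \to (x_0, y_0)$ with $y_n \geq a(x_n)$, then $m_n := f^{-1}(x_n, y_n)$ is locally bounded, and any limit point $m_\ast$ satisfies $f(x_0, m_\ast) = y_0$, forcing $m_\ast = f^{-1}(x_0, y_0)$. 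Define
\[
G(x) := f^{-1}\bigl(x, \max\{g(x), a(x)\}\bigr) \geq 0,
\]
which is then continuous on $\Tt^d$ by composition.

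To conclude, I verify that $m = G$ a.e. On the set $\{m > 0\}$, equation \eqref{EPpointwise} with $Du = 0$ gives $f(x,m(x)) = g(x)$; since $m(x) > 0$ and $f(x,\cdot)$ is strictly increasing, $g(x) > a(x)$, so $G(x) = f^{-1}(x, g(x)) = m(x)$. On $\{m = 0\}$, the subsolution inequality \eqref{EPsubsol} with $Du = 0$ reads $g(x) \leq f(x,0) = a(x)$, whence $G(x) = f^{-1}(x, a(x)) = 0 = m(x)$. Thus $m$ coincides almost everywhere with the continuous function $G$ and so has a continuous representative.

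Essentially no analytic obstacle appears: the whole point is that $Du \equiv 0$ turns \eqref{EP} into a scalar algebraic equation determining $m(x)$ pointwise. The only mild bookkeeping is the joint continuity of $f^{-1}$ across the transition set $\{g = a\}$, where the two branches in the definition of $G$ meet at the value $0$; this is handled by the compactness argument above. Note that the divergence equation \eqref{EPFP} plays no role in this step — it is automatically compatible with the continuous $m$ so obtained, since $(c,m,\lambda)$ is assumed to be a weak solution.
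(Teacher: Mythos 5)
Your proof is correct and takes essentially the same route as the paper: since $Du\equiv 0$, the first equation reduces to the algebraic relation $H(x,0)=f(x,m)+\lambda$ on $\{m>0\}$, and the paper concludes with exactly your formula, $m(x)=\max\{f^{-1}(x,H(x,0)-\lambda),0\}$, which is your $G$. Your write-up is in fact more careful than the paper's, which leaves implicit both the joint continuity of $f^{-1}$ and the verification on $\{m=0\}$ via the subsolution inequality \eqref{EPsubsol}.
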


\begin{proof}
From \eqref{EPpointwise}, it holds that 
\[H(x,0)=f(x,m)+\lambda \quad  \mbox{ a.e. in } \{m>0\}. \]
Because  $\lambda \mapsto \int_{\Tt^d} \max\{f^{-1}(x,H(x,0)-\lambda),0\} \mbox{ }dx$ is monotone, we can  choose $\lambda$ so that 
\[ \int_{\Tt^d} \max\{f^{-1}(x,H(x,0)-\lambda),0\} \mbox{ }dx=1.\] 
Hence, we can explicitly get
\[m(x)=\max\{f^{-1}(x,H(x,0)-\lambda),0\}.\]
\end{proof}

For instance, letting $H(x,p):=\frac{1}{2}|p|^2+V(x)$ for some $V\in C(\Tt^d)$, we can easily check that $(0,m,\lambda)$ is a weak solution to ergodic problem \eqref{EP}.

%

\section{Uniqueness issue of the ergodic problem}\label{nonuniqueness}
Here, we study uniqueness and non-uniqueness of solutions to the ergodic problem \eqref{EP}. We prove that $(m,\lambda)\in L^q(\Tt^d)\times \Rr$, which is a part of weak solutions to \eqref{EP}, is unique. On the other hand, we give an example which shows the multiplicity of $u\in W^{1,pr}(\Tt^d)$.

\subsection{Uniqueness of $m$ and $\lambda$}
First, we prove the uniqueness of $(m,\lambda)$. This is known in \cite{CG}, but we give it for the completeness of the paper. 
\begin{proposition}\label{par-uni}
Let $(u_1,m_1,\lambda_1)$ and $(u_2,m_2,\lambda_2)$ be weak solutions to \eqref{EP}. Then,  $\lambda_1=\lambda_2$, $m_1=m_2$ a.e. in $\Tt^d$ and $Du_1=Du_2$ a.e. in $\{x\in \Tt^d \:|\: m(x)>0\}$.
\end{proposition}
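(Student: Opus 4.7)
The plan is to carry out a Lasry--Lions type monotonicity argument adapted to the weak-solution framework of Definition \ref{def:weak-erg}. A key preliminary is to observe that the flux $w_i := -m_i D_pH(x, Du_i)$ belongs to $L^{r'q/(r'+q-1)}(\Tt^d;\Rr^d)$, whose exponent coincides with $(pr)'$, since an elementary computation gives $\frac{r'q}{r'+q-1} = \frac{pr}{pr-1}$. Hence, by density of $C^1(\Tt^d)$ in $W^{1,pr}(\Tt^d)$, the distributional identity \eqref{EPFP} extends to test functions in $W^{1,pr}(\Tt^d)$; in particular, testing the Fokker--Planck equation for $(u_i,m_i)$ against the admissible $u_j - u_i$ yields the cancellation
\begin{equation*}
\int_{\Tt^d} m_i\, D_pH(x, Du_i)\cdot(Du_j - Du_i)\,dx = 0, \qquad i, j \in \{1, 2\}.
\end{equation*}

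Combining this with the pointwise convexity inequality $H(x, Du_j) \geq H(x, Du_i) + D_pH(x, Du_i)\cdot(Du_j - Du_i)$, multiplied by $m_i \geq 0$ and integrated, I obtain $\int H(x, Du_j)\,m_i\,dx \geq \int H(x, Du_i)\,m_i\,dx$. On the other hand, \eqref{EPpointwise} together with the fact that $m_i$ vanishes on $\{m_i = 0\}$ gives the identity $\int H(x, Du_i)\,m_i\,dx = \int (f(x, m_i) + \lambda_i)\,m_i\,dx$, while \eqref{EPsubsol} multiplied by $m_i \geq 0$ provides the upper bound $\int H(x, Du_j)\,m_i\,dx \leq \int (f(x, m_j) + \lambda_j)\,m_i\,dx$ (using that on $\{m_j = 0\}$ one has $f(x, m_j) = f(x, 0)$, so \eqref{EPsubsol} is indeed harmless there). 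Chaining these three relations and invoking the normalization $\int m_i\,dx = 1$ yields
\begin{equation*}
\int_{\Tt^d} \bigl(f(x, m_i) - f(x, m_j)\bigr)\, m_i\,dx \leq \lambda_j - \lambda_i.
\end{equation*}

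Summing the two cases $(i,j) = (1,2)$ and $(2,1)$ eliminates the ergodic constants and produces
\begin{equation*}
\int_{\Tt^d} \bigl(f(x, m_1) - f(x, m_2)\bigr)(m_1 - m_2)\,dx \leq 0.
\end{equation*}
Since $f(x, \cdot)$ is strictly increasing by (H1), the integrand is pointwise nonnegative, so it must vanish a.e., forcing $m_1 = m_2$ a.e.\ on $\Tt^d$; write $m$ for the common value. With $m_1 = m_2$, the inequality $\int(f(x,m_i)-f(x,m_j))m_i\,dx \leq \lambda_j - \lambda_i$ collapses to $0 \leq \lambda_j - \lambda_i$ for both orderings, hence $\lambda_1 = \lambda_2$. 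The convexity inequality chained into the argument must then hold as an equality $m$-a.e., so strict convexity of $H(x, \cdot)$ forces $Du_1 = Du_2$ a.e.\ on $\{m > 0\}$.

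The main obstacle I anticipate is the justification that $u_j \in W^{1,pr}(\Tt^d)$ is an admissible test function for the distributional equation \eqref{EPFP}, which hinges on the conjugate-exponent identification $\tfrac{r'q}{r'+q-1} = (pr)'$ and a density argument. Once this is in hand, the remainder is the classical Lasry--Lions monotonicity trick, with the only additional bookkeeping being the careful splitting between $\{m_j > 0\}$ and $\{m_j = 0\}$ to accommodate the fact that \eqref{EPsubsol} is an inequality rather than an equality.
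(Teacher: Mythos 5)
Your argument is correct, and it takes a genuinely different route from the paper's. The paper argues variationally: each weak solution $(u_i,m_i,\lambda_i)$ produces a minimizer $(m_i,w_i)$ of $\mathcal{B}$ over $K$ (via the ergodic analogue of Proposition \ref{DPchar}, i.e.\ \cite[Proposition 4.5]{CG}), so strict convexity of $\mathcal{B}$ --- which is where the strict monotonicity of $f$ enters --- gives $m_1=m_2$ and $w_1=w_2$ at once; $\lambda_1=\lambda_2$ then follows by integrating the pointwise identity $m\lambda_i=-mH^*(x,-\frac{w}{m})-w\cdot Du_i-mf(x,m)$ and using $\div(w)=0$; finally $Du_1=Du_2$ on $\{m>0\}$ follows because otherwise strict convexity of $H(x,\cdot)$ would make $(u_2,\lambda)$ a strictly worse competitor than $(u_1,\lambda)$ for $\mathcal{A}$, contradicting minimality. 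You instead run the classical Lasry--Lions cross-testing argument directly at the level of Definition \ref{def:weak-erg}. This is more elementary and self-contained: it never invokes the variational characterization of weak solutions, and it uses the same two structural ingredients (strict monotonicity of $f$, strict convexity of $H$) pointwise rather than through strict convexity of the functionals. What your route buys is independence from the duality machinery of \cite{CG}; what the paper's route buys is that uniqueness of $(m,w)$ is immediate from a fact already established (Proposition \ref{minexist}).

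One step you should not treat as a mere observation: Definition \ref{def:weak-erg} only requires $m_iD_pH(\cdot,Du_i)\in L^1(\Tt^d)$, so the membership $w_i\in L^{\frac{r'q}{r'+q-1}}(\Tt^d;\Rr^d)$ --- which is the real hinge of your test-function argument, more so than the exponent identity or the density step --- must be proved. It is true and routine: convexity of $H(x,\cdot)$ together with its two-sided $r$-growth in (H2) gives $|D_pH(x,p)|\le C(|p|^{r-1}+1)$, while \eqref{EPpointwise} and (H1) give $|Du_i|^{r}\le C(H(x,Du_i)+C)\le C(m_i^{q-1}+1)$ a.e.\ on $\{m_i>0\}$; since $(r-1)r'=r$, this yields $m_i|D_pH(x,Du_i)|^{r'}\le C(m_i^q+m_i)\in L^1(\Tt^d)$, and then the H\"older step \eqref{westi} gives $w_i\in L^{\frac{r'q}{r'+q-1}}$. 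Your identity $\frac{r'q}{r'+q-1}=(pr)'$ is correct and is used by the paper itself (proof of Proposition \ref{DPchar}, where $L^{\frac{r'q}{r'+q-1}}(\Tt^d)=(L^{pr}(\Tt^d))'$); note also that the paper's own proof needs the same integrability implicitly, both when it places $(m_i,-m_iD_pH(\cdot,Du_i))$ in $K$ and when it pairs $w$ with $Du_i$ to extract $\lambda_i$. With that estimate supplied, the rest of your chain --- cancellation from \eqref{EPFP}, the convexity inequality weighted by $m_i$, the two bounds from \eqref{EPpointwise} and \eqref{EPsubsol}, symmetrization, and the equality case of strict convexity --- is complete and correct.
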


\begin{proof}

 As in Proposition \ref{minexist}, $\inf_{(m,w)\in K}\mathcal{B}(m,w)$ has the unique minimizer $(m,w)\in K$. As in the proof of Proposition \ref{DPchar}, we have 
\[w=-mD_pH(x,Du_i), \]
and combining with \eqref{EPpointwise},
\begin{align*}
-w\cdot Du_i
&=m\{H^*(x,-\frac{w}{m})+H(x,Du_i)\}=m\{H^*(x,-\frac{w}{m})+(f(x,m)+\lambda_i)\},
\end{align*}
for $i=1,2$. Then, we have
\[m\lambda_i= -mH^*(x,-\frac{w}{m})-w\cdot Du_i-mf(x,m) \quad \mbox{a.e.} \mbox{ } \mathrm{in}\  \Tt^d.\]
Therefore, integrating over $\Tt^d$, we get 
\begin{align*}
\lambda_i
&=\int_{\Tt^d} -mH^*(x,-\frac{w}{m})-w\cdot Du_i-mf(x,m)\mbox{ }dx\\
&=\int_{\Tt^d} -mH^*(x,-\frac{w}{m})-mf(x,m)\mbox{ }dx,
\end{align*}
since we have $\div(w)=0$ in the sense of distribution, which implies $\lambda_1=\lambda_2$. Set $\lambda:=\lambda_1=\lambda_2$.

Next,
 Suppose that 
\[ \mathcal{L}^d\left(\{x\in \Tt^d\:|\: m(x)>0 \mbox{ and } Du_1(x) \neq Du_2(x)\}\right)>0. \]
In light of the strict convexity of $H(x,\cdot)$, by a similar argument to that of the proof of Proposition \ref{DPchar}, we have
\begin{align*}
\mathcal{A}(u_2,\lambda)
&\geq \int_{\Tt^d} F^*(x,-\lambda+H(x,Du_1))+m(H(x,Du_2)-H(x,Du_1))+\lambda \mbox{ }dx.\\
&> \int_{\Tt^d} F^*(x,-\lambda+H(x,Du_1))+mD_pH(x,Du_1)\cdot D(u_2-u_1)+\lambda \mbox{ }dx\\
&= \int_{\Tt^d} F^*(x,-\lambda +H(x,Du_1))+\lambda \mbox{ }dx= \mathcal{A}(u_1,\lambda),
\end{align*}
where we use the fact that $(m,w)$ satisfies \eqref{EPFP}.
Hence, $(u_2,\lambda)$ does not minimize $\mathcal{A}$ among $W^{1,pr}(\Tt^d)\times \Rr$, which contradicts the result in \cite[Proposition 4.5]{CG}. Thus we conclude that $Du_1=Du_2$ a.e. in $\{x\in \Tt^d \:| \: m(x)>0\}$.
\end{proof}

\subsection{Lack of uniqueness}\label{subsec:lack}
In this section, we point out an example to demonstrate the non-uniqueness issue of $u\in W^{1,pr}(\Tt^d)$, where $(u,m,\lambda)$ is a weak solution to \eqref{EP} satisfying \eqref{EPsupersol}.

\begin{example}\label{ex:1}
Let $d=1$, $f(x,m)=m$ and $H(x,p)=\frac{1}{2}|p|^2+W(x)$, 
where $W:\Tt \to \Rr$ is given by
\begin{align*}\label{ex:W}
W(x):=
\begin{cases}
-32x+4 &\quad \mbox{for  } \:\:0\leq x\leq \frac 1 4\\
32x-12 &\quad \mbox{for  } \:\: \frac 1 4\leq x\leq \frac 1 2\\
-32x+20 &\quad \mbox{for  } \:\: \frac1 2\leq x\leq \frac 3 4\\
32x-28   &\quad \mbox{for  } \:\: \frac 3 4\leq x\leq 1.
\end{cases}
\end{align*}

Then, \eqref{EP} becomes
\begin{equation}\label{example4}
        \begin{cases}
                \:\:\frac{1}{2}|u_x|^2+W(x)=m+\lambda &\quad\text{in}\  \Tt, \\
                \:\: -(m u_x)_x= 0 &\quad \text{in} \ \Tt.
        \end{cases}
\end{equation} In this case, $p=q=r=2$. Thus $u\in W^{1,4}(\Tt)\subset C^{0,\frac{3}{4}}(\Tt)$.
Set $\lambda=0$ and $m(x)=\max\{W(x),0\}$. Then, $(0,m,\lambda)$ is a weak solution to \eqref{example4}. 
Indeed, clearly \eqref{EPpointwise} and \eqref{EPsubsol} hold and also we can easily check that for all $\psi \in C^1(\Tt)$,
\[\int_{\Tt}-(mu_x)\psi_x \mbox{ }dx=0,\]
because $u_x=0$ in $\Tt$, which implies \eqref{EPFP}.
By Proposition \ref{par-uni}, $(m,\lambda)$ is uniquely determined. Moreover, all weak solutions $u\in C^{0,\frac{3}{4}}(\Tt)$ satisfy  
\[u_x=0 \quad \mbox{a.e. in } \{m>0\}=[0,1/8]\cup[3/8,5/8]\cup[7/8,1].\]
From the first equation of \eqref{example4}, all weak solutions $u$ satisfies
\[|u_x|\leq \sqrt{2\max\{-W(x),0\}} \quad \mbox{a.e. in } \Tt.\]
We consider another type weak solution satisfying 
\begin{equation}\label{ex:supersol}
\frac{1}{2}|u_x|^2+W(x)\geq0 \quad \mbox{ in } \Tt \:\:\:\mbox{in the sense of viscosity solutions},
\end{equation}which corresponds to \eqref{EPsupersol}.
In particular, due to \eqref{ex:supersol}, $u_x$ can not jump from negative value to positive value in $\{m=0\}$.
For any $\theta\in[\frac 1 8, \frac 3 8]$, we set
\begin{align}\label{theta}
u^\theta_x(x)=&\sqrt{2\max\{-W(x),0\}}\cdot \chi_{\{\frac 1 8<x<\theta\} \cup \{\frac 5 8<x<1-\theta\}}\notag \\&-\sqrt{2\max\{-W(x),0\}}\cdot \chi_{\{\theta<x<\frac 3 8\} \cup \{ 1-\theta<x<\frac 7 8\}},
\end{align}
where $\chi_A$ is the characteristic function for $A\subset \Tt$, that is, $\chi_A(x)=1$ if $a\in A$ and $\chi_A(x)=0$ if $x \not \in A$.
Set
\begin{equation}\label{func:u-theta}
u^\theta(x):= \int_0^x u^\theta_x(y) \mbox{ }dy+C,
\end{equation}
 with any $C\in \Rr$. We can easily check $u^\theta \in W^{1,4}(\Tt)\subset C^{0,\frac{3}{4}}(\Tt)$ and $(u^\theta,m,0)$ is a weak solution to \eqref{example4} and satisfies \eqref{ex:supersol}. Also, we emphasize that all weak solutions satisfying \eqref{ex:supersol} are characterized by $u^\theta$ for $\theta\in[\frac 1 8,\frac 3 8]$. 
\begin{figure}[htb!]
	\begin{centering}
		\includegraphics[width=1.0\textwidth, scale=0.4]{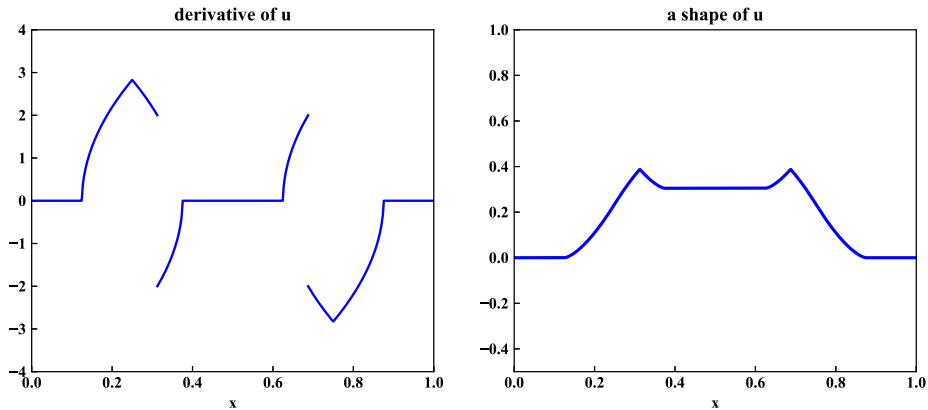}
		\caption{Plot of $u^\theta$  in \eqref{func:u-theta} with $\theta=\frac{5}{16}$ and $C=0$.}
	\end{centering}
\end{figure} 
 
\end{example}

\begin{remark}
In Example \ref{ex:1}, we point out that there exist weak solutions which do not satisfy \eqref{ex:supersol}. Indeed, $(0,m,0)$ is a weak solution, but it does not satisfy \eqref{ex:supersol}. Moreover, setting
\begin{align*}
v^\theta_x(x)=&-\sqrt{2\max\{-W(x),0\}}\cdot \chi_{\{\frac 1 8<x<\theta\} \cup \{\frac 5 8<x<1-\theta\}}\notag \\&+\sqrt{2\max\{-W(x),0\}}\cdot \chi_{\{\theta<x<\frac 3 8\} \cup \{1-\theta<x<\frac 7 8\}},
\end{align*}
we can easily check that $v^\theta$ is a weak solution for all $\theta \in [\frac 1 8, \frac 3 8]$, but it does not satisfy \eqref{ex:supersol}.
\end{remark}

\section{Selection criterion}\label{selection}
\subsection{Proof of Theorem \ref{thm:necessary}}

In this section, we prove Theorem \ref{thm:necessary}. This criterion is inspired by the work \cite{G,MT5,GMT}. 
\begin{proof}
By \eqref{pointwise}, \eqref{EPsubsol}, for almost every $x\in\{m^\epsi>0\}$, we have 
\[
f(x,m)-f(x,m^\epsi)\ge -\epsi u^\epsi+H(x,Du)-H(x,Du^\epsi) 
\ge D_pH(x,Du^\epsi)\cdot D(u-u^\epsi). 
\]
Multiplying this by $m^\epsi$ and integrating on $\Tt^d$, we get 
\[
-\epsi\int_{\Tt^d}u^\epsi m^\epsi\,dx-\int_{\Tt^d}\div(D_pH(x,Du^\epsi)m^\epsi)(u-u^\epsi)\,dx
\le 
\int_{\Tt^d}(f(x,m)-f(x,m^\epsi)) m^{\epsi}\,dx, 
\]
which implies 
\begin{equation}\label{ineq:u-uep}
\epsi\int_{\Tt^d}(u-u^\epsi)\,dx-\epsi\int_{\Tt^d}um^\epsi\,dx
\le 
\int_{\Tt^d}(f(x,m)-f(x,m^\epsi)) m^{\epsi}\,dx.  
\end{equation}

Similarly, by \eqref{subsol}, \eqref{EPpointwise}, for almost every $x\in\{m>0\}$, we have 
\[
\epsi u^\epsi+D_pH(x,Du^\epsi)\cdot D(u^\epsi-u)\le 
f(x,m^\epsi)-f(x,m). 
\]
Multiplying this by $m$ and integrating over $\Tt^d$  yields 
\begin{equation}\label{ineq:u-uep-2}
\epsi\int_{\Tt^d}u^\epsi m\,dx
\le 
\int_{\Tt^d}(f(x,m^\epsi)-f(x,m)) m\,dx.  
\end{equation}

Combining \eqref{ineq:u-uep} and \eqref{ineq:u-uep-2} together, we get 
\[
\epsi\int_{\Tt^d}(u-u^\epsi)\,dx-\epsi\int_{\Tt^d}um^\epsi\,dx
+\epsi\int_{\Tt^d}u^\epsi m\,dx
\le \int_{\Tt^d}(f(x,m^\epsi)-f(x,m)) (m-m^\epsi)\,dx\le0.   
\]
Therefore, dividing this by $\epsi>0$, and sending $\epsi=\epsi_n\to 0$, we obtain 
\[
\int_{\Tt^d}\overline{u}m\,dx-\int_{\Tt^d}\overline{u}\,dx\le 
\int_{\Tt^d}um\,dx-\int_{\Tt^d}u\,dx,  
\]
which finishes the proof. 
\end{proof}

\subsection{Convergence result}
In this section, as an application of our criterion, Theorem \ref{thm:necessary}, we show a nontrivial example that 
we get the whole convergence of the weak solution to \eqref{DP}. In Example \ref{ex:1}, we show the multiplicity of weak solutions satisfying \eqref{ex:supersol}. Here, we prove that the  minimizer of \eqref{criterion} is unique. 
In this example, it holds $pr>d$. Hence, any limits of $\langle u^{\epsi_n} \rangle$ satisfies \eqref{ex:supersol} as in Theorem \ref{result2}.
First we notice that 
\begin{equation}\label{min-theta}
\inf_{u\in \mathcal{E}}\int_{\Tt} \langle u \rangle m \mbox{ }dx= \inf_{\theta \in [\frac 1 8,\frac 3 8]}\int_{\Tt} \langle u^\theta \rangle m \mbox{ }dx,
\end{equation}
where  $u^\theta$ is the function defined by \eqref{func:u-theta}.
Because $\langle u^\theta \rangle$ is invariant with respect to adding constants, we can assume that $u^\theta(0)=0$ without loss of generality. Moreover, because $u^\theta$ and $m$ are symmetric with respect to $x=\frac 1 2$, it suffices to consider the interval $[0,\frac 1 2]$.
Recall that $m(x)=\max\{W(x),0\}$ and 
\[u^\theta_x=0 \quad \mbox{a.e. in } \{m>0\}=[0,1/8]\cup[3/8,5/8]\cup[7/8,1].\]
Thus, 
\begin{align*}
\int_0^{\frac 1 2} \langle u^\theta \rangle m \mbox{ }dx
&=\int_0^{\frac 1 2} u^\theta m \mbox{ }dx-\int_0^{\frac 1 2} u^\theta  \mbox{ }dx
=u^\theta \left(\frac3 8\right)\int_{\frac 3 8}^{\frac 1 2} m \mbox{ }dx- \int_0^{\frac 1 2} u^\theta  \mbox{ }dx\\
&= \frac{1}{4}u^\theta \left(\frac3 8 \right)-\int_0^{\frac 1 2} u^\theta  \mbox{ }dx.
\end{align*}
Then, we consider the following two cases.
\[\mbox{Case 1: }\frac 1 8 \leq \theta \leq \frac1 4,\quad \mbox{ Case 2: } \frac 1 4 \leq \theta \leq \frac3 8.\]

Case 1. Here, \eqref{theta} can be written by
\begin{align*}
u^\theta_x(x)=2\sqrt2 \left\{ \sqrt{8x-1}\left(\chi_{\{\frac 1 8<x<\theta\}}-\chi_{\{\theta<x<\frac 1 4\}} \right)-\sqrt{3-8x}\cdot \chi_{\{\frac 1 4<x<\frac 3 8\}} \right\},
\end{align*}
and $u^\theta$ is given by
\begin{align*}
u^\theta(x)=
\begin{cases}
0  &\quad  \mbox{for  } \:\:0\leq x\leq \frac 1 8\\ 
\frac {16}{3} (x-\frac 1 8)^{\frac 3 2} &\quad \mbox{for  } \:\:\frac 1 8\leq x\leq \theta \\
\frac {32}{3} (\theta-\frac 1 8)^{\frac 3 2} -\frac{16}{3}(x-\frac 1 8)^{\frac 3 2} &\quad \mbox{for  } \:\: \theta\leq x\leq \frac 1 4\\
\frac {32}{3} (\theta-\frac 1 8)^{\frac 3 2}  -\frac{32}{3}(\frac{1}{8})^{\frac 3 2}+\frac{16}{3}(\frac 3 8-x)^{\frac 3 2} &\quad \mbox{for  } \:\: \frac 1 4\leq x\leq \frac 3 8 \\
\frac {32}{3} (\theta-\frac 1 8)^{\frac 3 2}  -\frac{32}{3}(\frac{1}{8})^{\frac 3 2}&\quad \mbox{for  } \:\:\frac 3 8\leq x\leq \frac 1 2.
\end{cases}
\end{align*}
In particular, we have
\begin{align*}
\frac{\partial}{\partial \theta}u^\theta(x)=
\begin{cases}
0 &\quad \mbox{for  } \:\: 0\leq x \leq \theta\\
16  (\theta-\frac 1 8)^{\frac 1 2} &\quad \mbox{for  } \:\: \theta \leq x \leq \frac1 2.
\end{cases}
\end{align*}
Thus
\begin{align*}
\frac{\partial}{\partial \theta}\left(\int_0^{\frac 1 2} \langle u^\theta \rangle m \mbox{ }dx \right)
&= \frac{\partial}{\partial \theta} \left\{  \frac{1}{4}u^\theta\left(\frac3 8\right)-\int_0^{\frac 1 2} u^\theta  \mbox{ }dx \right\} \\
&= \frac{1}{4} \frac{\partial}{\partial \theta}  u^\theta \left(\frac3 8\right)-\int_0^{\frac 1 2} \frac{\partial}{\partial \theta} u^\theta  \mbox{ }dx=16(\theta-\frac 1 8)^{\frac 1 2}(\theta-\frac 1 4)<0,
\end{align*}
if $\theta\in (\frac 1 8,\frac 1 4)$, which implies $\int_{\Tt} \langle u^\theta \rangle m \mbox{ }dx$ is strictly decreasing in $\frac 1 8 < \theta  < \frac 1 4$.

Case 2. Here, \eqref{theta} can be written by
\begin{align*}
u^\theta_x(x)=2\sqrt2 \left\{ \sqrt{8x-1}\chi_{\{\frac 1 8<x<\frac 1 4\}}+\sqrt{3-8x}\left( \chi_{\{\frac 1 4<x<\theta \}} -\chi_{\{\theta<x<\frac 3 8\}} \right)\right\},
\end{align*}
and $u^\theta$ is given by
\begin{align*}
u^\theta(x)=
\begin{cases}
0  &\quad \mbox{for  } \:\:0\leq x\leq \frac 1 8\\ 
\frac {16}{3} (x-\frac 1 8)^{\frac 3 2} &\quad \mbox{for  } \:\: \frac 1 8\leq x\leq \frac 1 4 \\
-\frac{16}{3}(\frac 3 8-x)^{\frac 3 2}+\frac {32}{3} (\frac 1 8)^{\frac 3 2}  &\quad \mbox{for  } \:\: \frac 1 4\leq x\leq \theta\\
-\frac {32}{3} (\frac 3 8-\theta)^{\frac 3 2}  +\frac{32}{3}(\frac{1}{8})^{\frac 3 2}+\frac{16}{3}(\frac 3 8-x)^{\frac 3 2} &\quad \mbox{for  } \:\:\theta \leq x\leq \frac 3 8 \\
-\frac {32}{3} (\frac 3 8-\theta)^{\frac 3 2}  +\frac{32}{3}(\frac{1}{8})^{\frac 3 2}&\quad \mbox{for  } \:\:\frac 3 8\leq x\leq \frac 1 2.
\end{cases}
\end{align*}
In particular, we get
\begin{align*}
\frac{\partial}{\partial \theta}u^\theta(x)=
\begin{cases}
0 &\quad \mbox{for  } \:\: 0\leq x \leq \theta\\
16  (\frac 3 8-\theta)^{\frac 1 2} &\quad \mbox{for  } \:\: \theta \leq x \leq \frac1 2.
\end{cases}
\end{align*}
Thus 
\begin{align*}
\frac{\partial}{\partial \theta}\left(\int_0^{\frac 1 2} \langle u^\theta \rangle m \mbox{ }dx \right)
&= \frac{\partial}{\partial \theta} \left\{  \frac{1}{4}u^\theta\left(\frac3 8\right)-\int_0^{\frac 1 2} u^\theta  \mbox{ }dx \right\} \\
&= \frac{1}{4} \frac{\partial}{\partial \theta}  u^\theta \left(\frac3 8\right)-\int_0^{\frac 1 2} \frac{\partial}{\partial \theta} u^\theta  \mbox{ }dx=16(\frac 3 8-\theta)^{\frac 1 2}(\theta-\frac 1 4)>0,
\end{align*}
which implies $\int_{\Tt} \langle u_\theta \rangle m \mbox{ }dx$ is strictly increasing in $\frac 1 4 < \theta  < \frac 3 8$.

As observed above, the minimization \eqref{min-theta} attains at $\theta=\frac{1}{4}$ uniquely.
Note that the limit $\tilde u$ of $\langle u^\epsi \rangle$ satisfies $\int_{\Tt^d} \tilde u \mbox{ }dx=0$ because $\int_{\Tt^d} \langle u^\epsi \rangle \mbox{ }dx=0$.
In conclusion, we get the following result.

\begin{proposition}\label{ex:1-2}
Let $W$ be the function defined in Example \ref{ex:1}, and   
let  $(u^\epsi,m^\epsi)$ be the weak solution to
\begin{equation*}\label{example3}
        \begin{cases}
                \:\: \epsi u^\epsi+\frac{1}{2}|u_x^\epsi|^2+W(x)=m^\epsi &\quad\text{in}\  \Tt, \\
                \:\: \epsi m^\epsi-(m^\epsi u^\epsi_x)_x= \epsi &\quad \text{in} \ \Tt.
        \end{cases}
\end{equation*}
Let $\tilde u$ be defined by \eqref{func:u-theta} with $\theta=\frac{1}{4}$ and choosing $C\in \Rr$ such that $\int_{\Tt^d} \tilde u \mbox{ }dx=0$. 
Then, $\langle u^\epsi \rangle \to \tilde u$ as $\epsi \to 0$ uniformly on $\Tt$. 
 \end{proposition}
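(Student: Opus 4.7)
The plan is to combine the stability result (Theorem \ref{result2}), the necessary condition (Theorem \ref{thm:necessary}), and the explicit parametrization of weak solutions satisfying \eqref{ex:supersol} obtained in Example \ref{ex:1}. The computations preceding the statement essentially carry out the minimization, so the proposition becomes an application of these general tools together with the standard subsequence-uniqueness argument.

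First, I would invoke Theorem \ref{result2}. Here $p=q=r=r'=2$, $d=1$, hence $pr=4>d$ and the hypothesis $r'\le \frac{qd}{d-q}$ is vacuous since $q\ge d$. Thus, along any subsequence $\epsi_n\to 0$, we can extract a further subsequence (still denoted $\epsi_n$) such that $\langle u^{\epsi_n}\rangle \rightharpoonup \bar u$ weakly in $W^{1,4}(\Tt)$, $m^{\epsi_n}\rightharpoonup m$ weakly in $L^2(\Tt)$, and $\epsi_n\int u^{\epsi_n}\,dx \to -\lambda$, where $(\bar u,m,\lambda)$ is a weak solution to \eqref{EP} satisfying \eqref{EPsupersol}. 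By the Rellich--Kondrachov compact embedding $W^{1,4}(\Tt)\hookrightarrow C(\Tt)$, this convergence is uniform. From Example \ref{ex:1}, the unique pair $(m,\lambda)$ is $m(x)=\max\{W(x),0\}$ and $\lambda=0$, and every element of the set $\mathcal{E}$, viewed up to additive constants, is of the form $u^\theta$ defined by \eqref{func:u-theta} for some $\theta\in[1/8,3/8]$. Since $\int_{\Tt}\langle u^{\epsi_n}\rangle\,dx=0$, the limit $\bar u$ also has zero mean, so $\bar u=u^\theta - \int_{\Tt}u^\theta\,dx$ for some $\theta\in[1/8,3/8]$.

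Next, I would apply Theorem \ref{thm:necessary}: $\bar u$ must minimize $u\mapsto \int_{\Tt}\langle u\rangle m\,dx$ over $\mathcal{E}$. Since $\langle u\rangle$ is invariant under adding constants, the minimization reduces to
\[
\inf_{\theta\in[1/8,3/8]} \int_{\Tt}\langle u^\theta\rangle m\,dx,
\]
which is exactly \eqref{min-theta}. The computations already performed above (exploiting the symmetry of $u^\theta$ and $m$ about $x=1/2$ and splitting into the cases $1/8\le\theta\le 1/4$ and $1/4\le\theta\le 3/8$) show that
\[
\frac{\partial}{\partial\theta}\int_0^{1/2}\langle u^\theta\rangle m\,dx
= 16\,|\theta-1/8|^{1/2}\,(\theta-1/4) \quad \text{for } \theta\in[1/8,1/4],
\]
and
\[
\frac{\partial}{\partial\theta}\int_0^{1/2}\langle u^\theta\rangle m\,dx
= 16\,(3/8-\theta)^{1/2}\,(\theta-1/4) \quad \text{for } \theta\in[1/4,3/8].
\]
Thus the objective is strictly decreasing on $(1/8,1/4)$ and strictly increasing on $(1/4,3/8)$, so the unique minimizer is $\theta=1/4$. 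Consequently $\bar u = \tilde u$, with $\tilde u$ as in the statement.

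Finally, since every subsequence of $\{\langle u^{\epsi}\rangle\}$ admits a further subsequence converging uniformly to the same limit $\tilde u$, the whole family converges: $\langle u^{\epsi}\rangle\to\tilde u$ in $C(\Tt)$ as $\epsi\to 0$. No serious obstacle arises beyond organizing these ingredients; the only mildly delicate point is confirming that the subsequential limit $\bar u$ does belong to $\mathcal{E}$ (i.e., satisfies \eqref{ex:supersol}), which is precisely the content of the last assertion of Theorem \ref{result2} combined with $pr>d$.
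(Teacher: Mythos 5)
Your proposal is correct and follows essentially the same route as the paper's: Theorem \ref{result2} gives subsequential limits satisfying \eqref{EPsupersol}, Example \ref{ex:1} identifies such limits as $u^\theta$ up to additive constants, Theorem \ref{thm:necessary} together with the two-case monotonicity computation forces $\theta=\frac14$, and the subsequence-uniqueness argument upgrades this to convergence of the whole family. One caveat: your claim that every element of $\mathcal{E}$ is of the form $u^\theta$ up to constants is false --- the paper's Remark after Example \ref{ex:1} exhibits members of $\mathcal{E}$ (the constants and the functions $v^\theta$) that violate \eqref{ex:supersol} --- but this does not damage your argument, since all you actually use is the harmless direction: $\{u^\theta\}_\theta\subset\mathcal{E}$ and the limit $\bar u$, being a supersolution by Theorem \ref{result2} (as $pr>d$), lies in that subfamily, so minimality over $\mathcal{E}$ implies minimality over $\theta$.
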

\begin{figure}[htb!]
	\begin{centering}
		\includegraphics[width=1.0\textwidth, scale=0.4]{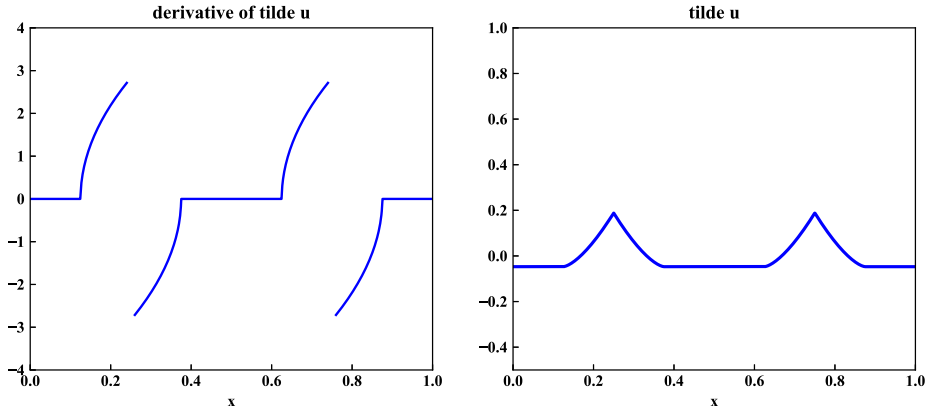}
		\caption{The limit of $\langle u^\epsi \rangle$  in Proposition \ref{ex:1-2}.}
	\end{centering}
\end{figure} 

\section{uniqueness set}\label{uniquenessstructure}

In this section, we consider a uniqueness set for weak solutions to the ergodic problem \eqref{EP}. We call  $\mathcal{Z}\subset \Tt^d$ a {\it uniqueness set} if  two weak solutions $(u_1,m,\lambda),(u_2,m,\lambda) \in W^{1,pr}(\Tt^d) \times L^q(\Tt^d) \times \Rr$ with \eqref{EPsupersol} satisfy $u_1=u_2$ on $\mathcal{Z}$, then $u_1=u_2$ on  $\Tt^d$.

\subsection{Classical comparison principle.}
We prove that $\mathcal{Z}$ given by \eqref{uniquenessset} is a uniqueness set of the ergodic problem \eqref{EP}.
We first notice that $m$ may not be continuous in general, so the notion of viscosity solutions may not work in $\Tt^d$. However, if $\mathcal{Z}^c:= \Tt^d \setminus \mathcal{Z} \neq \emptyset$, then $u$ satisfies
\[H(x,Du) \leq f(x,0)+\lambda \quad \mbox{ a.e. in } \mathcal{Z}^c. \]
Then, $u\in W^{1,\infty}(\mathcal{Z}^c)$.
Due to the convexity of Hamiltonian $u$ satisfies
\[H(x,Du) \leq f(x,0)+\lambda \quad \mbox{ in } \mathcal{Z}^c, \quad \mbox{ in the viscosity sense}. \]
Therefore, we can use a standard technique in the theory of viscosity solutions. Indeed, by a simple adaptation of the argument in \cite{Ishii}, we can easily prove the following comparison result. We give it for the completeness for the paper.

\begin{proof}[Proof of Theorem {\rm\ref{unique1}}]
Let $0<\mu<1$. Suppose that 
\[ \max_{x\in \Tt^d} \left\{ \mu u(x)-v(x)\right\}>0,\]
and this maximum is attained at some $x_0 \in \mathcal{Z}^c$. For $\delta>0$ we define $\Psi: \Tt^d \times \Tt^d \to \Rr$ by
\begin{equation*}
\Psi(x,y):=\mu u(x)-v(y)-\frac{|x-y|^2}{2\delta}-|x-x_0|^2.
\end{equation*}
Take $(x_\delta,y_\delta)$ so that $\Psi(x_\delta,y_\delta)=\max_{\Tt^d \times \Tt^d} \Psi$.
By a standard argument, we can prove
$x_\delta \to x_0$ and $y_\delta \to x_0$ as $\delta \to 0$.

Let $U_{x_0}$ be an open neighborhood of $x_0$ with $U_{x_0} \subset \mathcal{Z}^c$. Then, by \eqref{EPsubsol}, $u$ is a viscosity subsolution to
\[ H(x,Du)\leq f(x,0)+\lambda  \quad \mathrm{in}\ U_{x_0} .\]
Taking $\delta>0$ so small that $x_\delta, y_\delta \in U_{x_0}$, by the definition of viscosity solutions, we get
\begin{equation}\label{musub}
\mu H\left(x_\delta, \frac{p_\delta+2(x_\delta-x_0)}{\mu} \right)\leq \mu f(x_\delta,0)+\mu \lambda
\end{equation}
and
\begin{equation}\label{musuper}
H(y_\delta,p_\delta)\geq f(y_\delta,0)+\lambda,
\end{equation}
where $p_\delta:=\frac{x_\delta-y_\delta}{\delta}$. 
By the coercivity of $H(x,\cdot)$, inequality \eqref{musub} implies that $|p_\delta|\leq R$ for some $R>0$ independent of $\delta$. Subtracting \eqref{musub} from \eqref{musuper} yields
\begin{equation*}\label{gggg}
\mu H\left(x_\delta,\frac{p_\delta+2(x_\delta-x_0)}{\mu} \right) -H(y_\delta,p_\delta) \leq \mu f(x_\delta,0)-f(y_\delta,0)+(\mu-1)\lambda.
\end{equation*}
By passing to a subsequence if necessary, we have 
\[p_{\delta_j} \to p \in B(0,R).\]
Sending  $\delta=\delta_j \to 0$ yields
\[ (\mu-1)\{ f(x_0,0)+\lambda\} \geq \mu H\left(x_0,\frac{p}{\mu}\right)- H(x_0,p)\geq -(1-\mu)H(x_0,0),\]
by the convexity of Hamiltonian, which implies 
\[H(x_0,0)-\lambda-f(x_0,0) \geq0.\]
This contradicts a fact $x_0 \in \mathcal{Z}^c$.
\end{proof}

\begin{example}
 Let $H(x,p)=\frac{1}{2}|p|^2+V(x)$ for $V\in C(\Tt^d)$ and $f\in C([0,\infty))$. Then, 
\begin{equation*}\label{example2}
        \begin{cases}
                \:\: \frac{1}{2}|Du|^2+V(x)=f(m)+\lambda &\quad\text{in}\  \Tt^d, \\
                \:\: -\mathrm{div}(mDu)= 0 &\quad \text{in} \ \Tt^d.
        \end{cases}
\end{equation*}
Then, for all $c\in \Rr$, $(c,m,\lambda)$ is a weak solution, where $m$ is uniquely determined by
\begin{align*}
m(x):=\max\{f^{-1}(-\lambda+V(x)),0\} 
\end{align*}
and we choose $\lambda \in \Rr$ such that $\int_{\Tt^d} m(x)=1$. Thus, \eqref{uniquenessset} becomes
\begin{equation*}
\mathcal{Z}=\{ x\in \Tt^d \:|\: -\lambda+V(x)-f(0)\geq0\}.
\end{equation*}
\end{example}

Combining Proposition \ref{par-uni} and Theorem \ref{unique1}, we get a sufficient condition to obtain the whole convergence of $\langle u^\epsi \rangle$.
\begin{corollary}
Let $c\in \Rr$ and assume that $(c,m,\lambda)$ is a weak solution to ergodic problem \eqref{EP}. Furthermore assume that the set defined by
\[ \{x\in \Tt^d \:|\: -\lambda+H(x,0)-f(x,0)>0\}\]
is connected and satisfies
\[ \{x\in \Tt^d \:|\: -\lambda+H(x,0)-f(x,0)\geq0\}=\overline{\{x\in \Tt^d \:|\: -\lambda+H(x,0)-f(x,0)>0\} } .\]
 Then, weak solution $u\in W^{1,\infty}(\Tt^d)$ to \eqref{EP} with \eqref{EPsupersol} is unique up to constants. 
 
Moreover,  if $pr>d$, then $\langle u^\epsi \rangle$ uniformly converges to a unique limit on $\Tt^d$ as $\epsi \to 0$.
\end{corollary}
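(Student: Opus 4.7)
The plan is to combine Proposition \ref{m:const}, Proposition \ref{par-uni}, and the comparison principle Theorem \ref{unique1}, exploiting the constant solution $(c,m,\lambda)$ to pin down the geometry of $\mathcal{Z}$. First, since $(c,m,\lambda)$ is a weak solution, Proposition \ref{m:const} gives the explicit formula $m(x)=\max\{f^{-1}(x,H(x,0)-\lambda),0\}$. Using that $f(x,\cdot)$ is strictly increasing with $f(x,0)\le 0$, I get
\[
\{m>0\}=\{x\in\Tt^d\mid H(x,0)-\lambda-f(x,0)>0\},
\]
which by hypothesis is a connected open set whose closure equals $\{H(x,0)-\lambda-f(x,0)\ge 0\}$. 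Consequently the uniqueness set from \eqref{uniquenessset} simplifies to $\mathcal{Z}=\overline{\{m>0\}}$, itself a connected closed set.

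Next, take two weak solutions $(u_1,m,\lambda)$ and $(u_2,m,\lambda)$ in $W^{1,\infty}(\Tt^d)$ satisfying \eqref{EPsupersol}. By Proposition \ref{par-uni}, $Du_1=Du_2$ almost everywhere on $\{m>0\}$. Since this is a connected open set and both $u_i$ are continuous (indeed Lipschitz), the function $u_1-u_2$ is constant on $\{m>0\}$, and by continuity it equals the same constant $C$ on $\mathcal{Z}=\overline{\{m>0\}}$. Replacing $u_2$ by $u_2+C$, I obtain $u_1=u_2+C$ on $\mathcal{Z}$. Applying Theorem \ref{unique1} in both directions (with $u_1,u_2+C\in W^{1,\infty}(\Tt^d)\subset C(\Tt^d)$) yields $u_1=u_2+C$ on all of $\Tt^d$, i.e., uniqueness up to constants.

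For the convergence statement under $pr>d$, I invoke Theorem \ref{result2}: any subsequence of $\{\langle u^\epsi\rangle\}$ admits a further subsequence converging weakly in $W^{1,pr}(\Tt^d)$ to some limit $\bar u$, where $(\bar u,m,\lambda)$ is a weak solution to \eqref{EP} satisfying \eqref{EPsupersol}. Since $m\in C(\Tt^d)\subset L^{\infty}$, the inequality \eqref{EPsubsol} together with the coercivity assumption (H2) gives $\bar u\in W^{1,\infty}(\Tt^d)$, so the uniqueness result above applies: every such $\bar u$ is determined modulo an additive constant. The normalization $\int_{\Tt^d}\langle u^\epsi\rangle\,dx=0$ is preserved under weak convergence, forcing $\int_{\Tt^d}\bar u\,dx=0$ and hence fixing the constant. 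Therefore all subsequential limits coincide, which (via a standard argument) implies full convergence; the Rellich--Kondrachov compact embedding $W^{1,pr}(\Tt^d)\hookrightarrow C(\Tt^d)$ for $pr>d$ upgrades the weak convergence to uniform convergence.

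The main obstacle I foresee is the regularity step needed to invoke Theorem \ref{unique1}, since the stability result only delivers $\bar u\in W^{1,pr}(\Tt^d)$ while the comparison principle is stated for $W^{1,pr}\cap USC/LSC$ solutions that in practice need Lipschitz control on the subsolution region $\mathcal{Z}^c$. This is not a serious difficulty here because $m\in L^{\infty}(\Tt^d)$ (from the explicit formula of Proposition \ref{m:const}) combined with (H1) gives $f(\cdot,m)\in L^{\infty}$, so the coercivity of $H$ forces $Du\in L^{\infty}$; the remaining verifications (that $\{m>0\}$ is open, that $u_1-u_2$ being weakly constant on a connected open set is truly constant) are routine.
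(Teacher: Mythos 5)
Your proposal is correct and follows essentially the same route as the paper's proof: Proposition \ref{m:const} to get $m\in C(\Tt^d)$ and the identification $\{m>0\}=\{-\lambda+H(x,0)-f(x,0)>0\}$ (hence $\mathcal{Z}=\overline{\{m>0\}}$), Proposition \ref{par-uni} plus connectedness to get a constant difference on $\{m>0\}$, Theorem \ref{unique1} (applied in both directions) to propagate it to $\Tt^d$, and then Theorem \ref{result2} with the zero-average normalization to upgrade subsequential uniform convergence to full convergence. The only cosmetic difference is that the paper uses the constant solution $c$ to write $Du_1=Du_2=0$ on $\{m>0\}$, whereas you only use $Du_1=Du_2$, which suffices.
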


\begin{proof}
By Proposition \ref{m:const}, we have $m\in C(\Tt^d)$, which implies $u\in W^{1,\infty}(\Tt^d)$. 
Because $(c,m,\lambda)$ is a weak solution, it holds that
\begin{align*}
\{x\in \Tt^d \:|\: m(x)>0\}
&=\{x\in \Tt^d \:|\: f^{-1}(x,-\lambda+H(x,0))>0\}\\
&=\{x\in \Tt^d \:|\: -\lambda+H(x,0)-f(x,0)>0\}.
\end{align*}
Let $(u_1,m,\lambda)$ and $(u_2,m,\lambda)$ be weak solutions to \eqref{EP} and satisfy \eqref{EPsupersol}.  By Proposition \ref{par-uni}, $Du_1=Du_2=0$ in $\{m>0\}$. Because $\{m>0\}$ is connected, there exists a unique constant $M\in \Rr$ such that $u_1=u_2+M$ in $\{m>0\}$. Since
\begin{align*}
\overline{\{x\in \Tt^d \:|\: m(x)>0\}}
&=\overline{\{x\in \Tt^d \:|\: -\lambda+H(x,0)-f(x,0)>0\}}\\
&=\{x\in \Tt^d \:|\: -\lambda+H(x,0)-f(x,0)\geq0\},
\end{align*}
it holds that  $\mathcal{Z}=\overline{ \{m(x)>0 \}}$. Hence, by Theorem \ref{unique1}, we have $u_1=u_2+M$ in $\Tt^d$.

Under assumption $pr>d$, $\langle u^\epsi \rangle$ uniformly converges to $u$, which is a unique (up to constants) weak solution to \eqref{EP} and satisfies \eqref{EPsupersol}, along subsequences. Because $\int_{\Tt^d} \langle u^\epsi \rangle\mbox{ }dx=0$, one has $\langle u^\epsi \rangle \to \bar u$ uniformly as the whole sequence, where $\bar u$ satisfies $\int_{\Tt^d} \bar u \mbox{ }dx=0$. 
\end{proof}

\subsection{Comparison in terms of Mather measures}
In this section, we first prove Theorem \ref{weakvis}, and as a corollary, we give a comparison principle in terms of Mather measures. 


%

\begin{proof}[Proof of Theorem {\rm\ref{weakvis}}]
We first prove (i). 
We notice that under the continuity assumption on $m$ we can easily see that 
$\lambda$ coincides with the ergodic constant for the Hamilton-Jacobi equation.  
Thus, \eqref{EPHJ} admits at least one viscosity solution $v\in \mathrm{Lip}(\Tt^d)$. 
Note that $v$ satisfies
\begin{equation*}
H(x,Dv)=f(x,m)+\lambda \quad\text{a.e.  in}\  \Tt^d.
\end{equation*}
Thus, we obtain
\begin{align*}
\mathcal{A}(v,\lambda)
&=\int_{\Tt^d} F^*(x,-\lambda+H(x,Dv))+\lambda \mbox{ }dx= \int_{\Tt^d} F^*(x,f(x,m))+\lambda \mbox{ }dx\\
&=\int_{\{m>0\}} F^*(x,-\lambda+H(x,Du))+\lambda \mbox{ }dx+\int_{\{m=0\}} F^*(x,f(x,0))+\lambda \mbox{ }dx.
\end{align*} 
As in \eqref{Fstar}, because 
\[F^*(x,-\lambda+H(x,Du))=F^*(x,f(x,0))=0  \quad\text{on}\  \{m=0\}, \]
we have
\[\mathcal{A}(v,\lambda)=\int_{\Tt^d} F^*(x,-\lambda+H(x,Du))+\lambda \mbox{ }dx=\mathcal{A}(u,\lambda).\]
Therefore, $(v,\lambda)$ is a minimizer of $\inf_{(\phi,c)\in W^{1pr}\times \Rr}\mathcal{A}(\phi,c)$. As in Proposition \ref{DPchar}, $(v,m,\lambda)$ is a weak solution to \eqref{EP}. Because $f(x,\cdot)$ is increasing, $v$ satisfies \eqref{EPsupersol} in the sense of viscosity solutions.

Next, we prove (ii). In light of \eqref{EPsubsol} and $m\in C(\Tt^d)$, we have $u\in W^{1,\infty}(\Tt^d)$. 
 Due to the convexity of $H(x,\cdot)$, we can easily check $u$ is a viscosity subsolution to \eqref{EP}. We prove that $u$ is a viscosity supersolution by contradiction. Suppose that there exists $x_0 \in \Tt^d$ and $\phi \in C^1(\Tt^d)$ such that 
\[(u-\phi)(x_0)< (u-\phi) (x) \quad \mbox{ for all } x\in U_{x_0},\]
and
\begin{equation}\label{ccd}
H(x_0,D\phi(x_0))<f(x_0,m(x_0))+\lambda,
\end{equation}
where $U_{x_0}$ is a  sufficiently  small open neighborhood of $x_0$. Because $u$ satisfies \eqref{EPsupersol}, it is enough to consider the case $x_0 \in \{m>0\}$ and $U_{x_0} \subset \{m>0\}$.

Here, let $v\in \mathrm{Lip}(\Tt^d)$ be a viscosity solution to \eqref{EPHJ}. Then, as we proved the above, $(v,m,\lambda)$ is a weak solution to \eqref{EP}. Note that $Du=Dv$ a.e. in $\{m>0\}$ due to Proposition \ref{par-uni}. Thus, there exists a constant $\hat c= u(x)-v(x)$ for all $x\in U_{x_0}$. In particular, we have
\[(v+\hat c-\phi)(x_0)< (v+\hat c-\phi) (x) \quad \mbox{ for all } x\in U_{x_0}.\] 
 Because $v+\hat c$ is a viscosity solution to \eqref{EPHJ},  it holds
 \[H(x_0,D\phi(x_0))\geq f(x_0,m(x_0))+\lambda,\]
which contradicts \eqref{ccd}.
\end{proof}

\begin{corollary}\label{unique2}
Let $(u_1,m,\lambda),(u_2,m,\lambda)\in W^{1,pr}(\Tt^d)\times L^q(\Tt^d) \times \Rr$ be weak solutions to ergodic problem \eqref{EP} and satisfy \eqref{EPsupersol}. Assume that $m\in C(\Tt^d)$. If
\begin{equation*}
\iint_{\Tt^d \times \Rr^d} u_1 \mbox{ }d\mu(x,v) \leq  \iint_{\Tt^d \times \Rr^d} u_2 \mbox{ }d\mu(x,v) \quad \mbox{ for all } \mu \in \mathcal{M},
\end{equation*}
 then, $u_1 \leq u_2$ in $\Tt^d$. 
 Here, we denote by  $\mathcal{M} \subset \mathcal{P}(\Tt^d \times \Rr^d)$ all minimizing measures of
\begin{equation*}\label{actionmin}
\inf_{\mu \in \mathcal{H}} \iint_{\Tt^d \times \Rr^d} L(x,v)\mbox{ }d\mu(x,v),
\end{equation*}
where we set $L(x,v):=\sup_{p\in \Rr^d}\{v\cdot p-H(x,p)+f(x,m)\}$, and 
\begin{equation*}
\mathcal{H}:=\left\{ \mu \in \mathcal{P}(\Tt^d \times \Rr^d) \mid \iint_{\Tt^d \times \Rr^d} v\cdot D\phi \mbox{ }d\mu(x,v)=0 \quad \text{for all} \ \phi \in C^1(\Tt^d) \right\}.
\end{equation*}
\end{corollary}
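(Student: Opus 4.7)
The plan is to reduce to a calibrated-curve argument in the spirit of weak KAM theory. By Theorem \ref{weakvis}\,(ii), both $u_1$ and $u_2$ are viscosity solutions of
\[
H(x, Du) = f(x, m(x)) + \lambda \quad \text{in } \Tt^d.
\]
Since $m \in C(\Tt^d)$, the Lagrangian $L(x, v) = H^*(x, v) + f(x, m(x))$ is continuous on $\Tt^d \times \Rr^d$, strictly convex and superlinear in $v$ by \textup{(H2)}, and $u_1, u_2$ are Lipschitz by coercivity of $H$. The usual weak KAM framework then applies.

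Fix $x_0 \in \Tt^d$. We first produce a backward calibrated curve $\gamma : (-\infty, 0] \to \Tt^d$ for $u_2$ with $\gamma(0) = x_0$, satisfying
\[
u_2(x_0) - u_2(\gamma(s)) = \int_s^0 L(\gamma(t), \dot\gamma(t)) \, dt - \lambda s \quad \text{for every } s \leq 0.
\]
The Fenchel inequality along $\gamma$ applied to the viscosity subsolution $u_1$ gives
\[
u_1(x_0) - u_1(\gamma(s)) \leq \int_s^0 L(\gamma(t), \dot\gamma(t)) \, dt - \lambda s.
\]
Subtracting yields the pointwise bound $u_1(x_0) - u_2(x_0) \leq (u_1 - u_2)(\gamma(s))$ for every $s \leq 0$.

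Averaging this over $[-T, 0]$ and setting
\[
\mu_T := \frac{1}{T} \int_{-T}^0 \delta_{(\gamma(s), \dot\gamma(s))} \, ds \in \mathcal{P}(\Tt^d\times\Rr^d),
\]
we get $u_1(x_0) - u_2(x_0) \leq \iint (u_1 - u_2) \, d\mu_T$. A direct computation shows $\iint v \cdot D\phi \, d\mu_T = T^{-1}(\phi(x_0) - \phi(\gamma(-T))) = O(T^{-1})$ for any $\phi \in C^1(\Tt^d)$, while the calibration identity rewrites $\iint L \, d\mu_T = T^{-1}(u_2(x_0) - u_2(\gamma(-T))) - \lambda$, which tends to $-\lambda$ as $T \to \infty$ by boundedness of $u_2$. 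Superlinearity of $H^*$ yields tightness, so along a subsequence $\mu_T \rightharpoonup \mu$ with $\mu \in \mathcal{H}$ and $\iint L \, d\mu \leq -\lambda$ by lower semicontinuity; the reverse bound $\iint L \, d\mu \geq -\lambda$ follows from the Fenchel inequality applied to any viscosity solution, so $\mu \in \mathcal{M}$. Passing to the limit and invoking the hypothesis produces
\[
u_1(x_0) - u_2(x_0) \leq \iint (u_1 - u_2) \, d\mu \leq 0,
\]
and the conclusion follows since $x_0$ is arbitrary.

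The main technical hurdle is the construction of the infinite backward calibrated curve together with the identification of the weak limit $\mu$ as a Mather measure. In the smooth Tonelli setting these are classical results from Fathi's weak KAM theory; here the Lagrangian $L$ is only continuous in $(x, v)$, so one must appeal to the continuous Tonelli framework developed, e.g., by Davini--Siconolfi. The hypothesis $m \in C(\Tt^d)$, which via Theorem \ref{weakvis} produces the viscosity-solution structure on the whole of $\Tt^d$, is essential at precisely this step.
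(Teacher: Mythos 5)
Your proposal is correct in substance, but its second half follows a genuinely different route from the paper. Both arguments begin the same way: Theorem \ref{weakvis} (ii) upgrades $u_1$ and $u_2$ to viscosity solutions of \eqref{EPHJ}, which is exactly where the hypotheses $m\in C(\Tt^d)$ and \eqref{EPsupersol} enter. At that point the paper is done: it invokes \cite[Theorem 1.1]{MT6}, which is precisely this comparison principle in terms of Mather measures for the additive eigenvalue problem. You instead reprove that result by weak KAM methods --- backward calibrated curves for $u_2$, the Fenchel/subsolution inequality along those curves for $u_1$, and identification of weak limits of the empirical measures $\mu_T$ as elements of $\mathcal{M}$; your two-sided argument showing $\inf_{\mathcal{H}}\iint L\,d\mu=-\lambda$ (upper bound from calibration, lower bound from Fenchel) is the right way to see that the limit measure is minimizing. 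The citation buys the paper brevity and sidesteps the machinery your route needs when the effective Hamiltonian $H(x,p)-f(x,m(x))$ is merely continuous in $x$ rather than Tonelli: existence of infinite backward calibrated curves, Lipschitz regularity of action minimizers, and differentiation along curves for the Lipschitz function $u_1$; you correctly flag this as the main hurdle and defer to the continuous-Tonelli literature. Your route, in exchange, is self-contained and makes transparent exactly where the hypothesis ``$\iint u_1\,d\mu\le\iint u_2\,d\mu$ for all $\mu\in\mathcal{M}$'' is used. Two points in your write-up deserve tightening: (a) the a.e.\ inequality $\tfrac{d}{dt}u_1(\gamma(t))\le L(\gamma(t),\dot\gamma(t))+\lambda$ is not immediate, since $Du_1$ need not exist at points of $\gamma$; one should mollify $u_1$ into a $C^1$ approximate subsolution (legitimate because $H(x,\cdot)$ is convex and coercive) or invoke the known equivalence between viscosity subsolutions and subsolutions along curves; (b) the bound $\iint L\,d\mu'\ge-\lambda$ for all $\mu'\in\mathcal{H}$ likewise requires a $C^1$ (approximate) subsolution rather than the Lipschitz solution itself, because the holonomy constraint defining $\mathcal{H}$ is only tested against $\phi\in C^1(\Tt^d)$; the same mollification handles this. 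Neither point is a gap in the strategy, only in the write-up.
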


\begin{proof}
By Theorem \ref{weakvis}, $u_1$ and $u_2$ are viscosity solutions to \eqref{EPHJ}. Thus, this is a straightforward result of \cite[Theorem 1.1]{MT6}. 
\end{proof}

\appendix

\section{Sobolev Estimate}

In this appendix, we obtain $H^1$-estimate for $m^\epsi$ inspired by the works in \cite{GM,PS,Santambrogio}. In this section, we additionally assume that $H^*$ and $F$ are twice-differentiable with respect to the first variable.  Here is the goal of this appendix.

\begin{proposition}\label{H1esti}
Assume that there exist $J, \tilde{J}: [0,\infty) \to \Rr$ and $c_0>0$, such that for all $m,a \in [0,\infty)$,
\begin{equation}\label{hypJ}
F(x,m)+F^*(x,a)-ma\geq c_0|J(m)-\tilde{J}(a)|^2. 
\end{equation}
Further, assume that there exists $C>0$ such that 
\begin{equation}\label{H^*xx}
 \sum_{i,j=1}^d  \frac{\partial^2 H^*}{\partial x_i \partial x_j}(x,p)h_i h_j \leq C\left( |p|^{r'}+1\right)|h|^2, 
\end{equation}
\begin{equation}\label{Fxx}
 \sum_{i,j=1}^d  \frac{\partial^2F}{\partial x_i \partial x_j}(x,m) h_i h_j \leq C\left( |m|^{q}+1\right)|h|^2, 
\end{equation}
for all $h=^t\!(h_1,\ldots,h_d) \in \Rr^d$, $(x,p)\in \Tt^d \times \Rr^d$ and $ m \geq0$. 
Let $(m^\epsi,w^\epsi)\in K_{\epsi}$ be the minimizer of \eqref{Bmin}. 
There exists a constant $C>0$, which is independent of $\epsi$, such that 
\[ \|J(m^\epsi)\|_{H^1(\Tt^d)}\leq C.\]
\end{proposition}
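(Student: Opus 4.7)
The plan is to combine the pointwise quadratic Fenchel--Young gap \eqref{hypJ} with a translation-perturbation of the minimizer $(m^\epsi, w^\epsi)$, and to convert the resulting estimate into the Nirenberg difference-quotient characterization of $H^1$. As a preliminary step, setting $u = u^\epsi$ and $a := \epsi u + H(\cdot, Du)$, Proposition \ref{DPchar} gives $a = f(x, m^\epsi)$ on $\{m^\epsi>0\}$ and $a \le f(x,0) \le 0$ on $\{m^\epsi=0\}$; in both cases $a \in \partial_m F(x, m^\epsi)$, so the Fenchel equality $F(x, m^\epsi) + F^*(x, a) = m^\epsi a$ holds a.e. Plugging this into \eqref{hypJ} forces $J(m^\epsi) = \tilde J(a)$ a.e.\ on $\Tt^d$.

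For $h \in \Rr^d$ set $m_h(x) := m^\epsi(x+h)$, $w_h(x) := w^\epsi(x+h)$; translation invariance of \eqref{DPFP} gives $(m_h, w_h) \in K_\epsi$. Adding and subtracting $\int m_h a\,dx$ and using the constraint to trade $\int \epsi u(m_h - 1)\,dx$ for $\int Du \cdot w_h\,dx$ yields
\begin{align*}
\mathcal{A}^\epsi(u) + \mathcal{B}(m_h, w_h)
&= \int_{\Tt^d} \bigl[F^*(x, a) + F(x, m_h) - m_h a\bigr]\,dx \\
&\quad + \int_{\Tt^d} \bigl[m_h H(x, Du) + m_h H^*(x, -w_h/m_h) + Du \cdot w_h\bigr]\,dx.
\end{align*}
The second integrand is non-negative by Fenchel--Young for $H, H^*$; applying \eqref{hypJ} to the first, together with the duality $\mathcal{A}^\epsi(u) = -\mathcal{B}(m^\epsi, w^\epsi)$ of Proposition \ref{prop:dual} and the identification $J(m^\epsi)=\tilde J(a)$ from the first paragraph, gives
\[
\mathcal{B}(m_h, w_h) - \mathcal{B}(m^\epsi, w^\epsi) \ge c_0 \int_{\Tt^d} |J(m^\epsi)(x+h) - J(m^\epsi)(x)|^2\,dx.
\]

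To upper-bound the left-hand side I would change variables and symmetrise in $h \leftrightarrow -h$ to produce a second-order central difference in the $x$-variable of $H^*$ and $F$, then invoke \eqref{H^*xx}--\eqref{Fxx}:
\begin{align*}
&\bigl[\mathcal{B}(m_h, w_h) - \mathcal{B}(m^\epsi, w^\epsi)\bigr] + \bigl[\mathcal{B}(m_{-h}, w_{-h}) - \mathcal{B}(m^\epsi, w^\epsi)\bigr] \\
&\qquad \le C|h|^2 \int_{\Tt^d} \bigl(m^\epsi |w^\epsi/m^\epsi|^{r'} + |m^\epsi|^q + m^\epsi + 1\bigr)\,dy \le C|h|^2,
\end{align*}
the last inequality using $m H^*(x, -w/m) \ge \frac{1}{C} m|w/m|^{r'} - Cm$ together with the uniform-in-$\epsi$ estimates of Proposition \ref{compact}. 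Each bracket on the left is non-negative by minimality, so each is at most $C|h|^2$; combining this with the lower bound above yields $\int_{\Tt^d} |J(m^\epsi)(\cdot + h) - J(m^\epsi)|^2\,dx \le C|h|^2$ uniformly in $\epsi$ and in small $h$. The Nirenberg translation criterion on $\Tt^d$ then gives $\|\nabla J(m^\epsi)\|_{L^2(\Tt^d)} \le C$, and the $L^2$-bound on $J(m^\epsi)$ itself follows from $J(m^\epsi) = \tilde J(f(\cdot, m^\epsi))$ and $\|m^\epsi\|_{L^q} \le C$.

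The main delicacy I expect is the careful handling of $\{m^\epsi = 0\}$ in the second step: the ratio $w^\epsi/m^\epsi$ is undefined there, but $w^\epsi = 0$ by finiteness of $\mathcal{B}(m^\epsi, w^\epsi)$, so the convention in \eqref{B} absorbs the issue, and the Fenchel equality collapses to $F^*(x,a) = 0$, which is exactly \eqref{Fstar}. Once this book-keeping is settled, the translation estimate is a routine second-order Taylor argument, provided the uniform integrability of $m^\epsi |w^\epsi/m^\epsi|^{r'}$ (visible from $\mathcal{B}(m^\epsi, w^\epsi)$ being bounded and from the growth of $H^*$) is recorded upfront.
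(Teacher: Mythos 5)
Your argument is correct, and its skeleton coincides with the paper's own appendix proof: translate the minimizer (so that the shifted pair stays in $K_\epsi$), combine the duality $\mathcal{A}^\epsi(u^\epsi)=-\mathcal{B}(m^\epsi,w^\epsi)$ of Proposition \ref{prop:dual} with the quadratic gap \eqref{hypJ} to bound $\|J(m^\epsi)(\cdot+h)-J(m^\epsi)\|_{L^2(\Tt^d)}^2$ by the excess $\mathcal{B}(m^\epsi_h,w^\epsi_h)-\mathcal{B}(m^\epsi,w^\epsi)$, show this excess is of order $|h|^2$ using \eqref{H^*xx}--\eqref{Fxx} and the uniform bounds of Proposition \ref{compact}, and conclude by difference quotients. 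Two of your sub-steps are implemented differently from the paper, and both are acceptable. First, you obtain $J(m^\epsi)=\tilde J(\epsi u^\epsi+H(\cdot,Du^\epsi))$ a.e.\ pointwise from the Fenchel equality furnished by Proposition \ref{DPchar}; the paper reaches the same identity in integrated form, via Lemma \ref{Jesit} and duality (the relevant $L^2$ distance is bounded by $\frac{1}{c_0}\{\mathcal{A}^\epsi(u^\epsi)+\mathcal{B}(m^\epsi,w^\epsi)\}=0$), followed by a triangle inequality. Second, and more substantively, your $h\leftrightarrow-h$ symmetrization replaces the paper's Lemma \ref{Bhesti}, which proves that $h\mapsto B^\epsi(h)$ is $C^1$, extracts the first-order condition $DB^\epsi(0)=0$ from minimality, and then performs a mean-value Taylor expansion; your variant never differentiates $B^\epsi$ in $h$ (minimality enters only to make each of the two brackets non-negative), uses only the one-sided Hessian bounds exactly as the paper does, and is thus a slightly more economical proof of the same estimate. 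The only real loose end is your closing claim that $\|J(m^\epsi)\|_{L^2}\le C$ ``follows from $J(m^\epsi)=\tilde J(f(\cdot,m^\epsi))$ and $\|m^\epsi\|_{L^q}\le C$'': this would require growth information on $\tilde J\circ f$ that is not among the hypotheses. A clean fix entirely within the assumptions is to take $a=0$ in \eqref{hypJ}, which gives $c_0|J(m^\epsi)-\tilde J(0)|^2\le F(x,m^\epsi)+F^*(x,0)\le C(|m^\epsi|^q+1)$ pointwise, hence the missing $L^2$ bound after integration; note that the paper's own proof leaves this $L^2$ step implicit as well, passing directly from the translation estimate to the $H^1$ bound.
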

%

\begin{lemma}\label{Bhesti}
Let us define $B^\epsi: \Tt^d \to \Rr$ by 
\begin{align*}
B^\epsi(h):=\int_{\Tt^d} m^\epsi(x)H^*\left(x+h,\frac{-w^\epsi(x)}{m^\epsi(x)}\right)+F(x+h,m^\epsi(x))\,dx, 
\end{align*}
where $(m^\epsi,w^\epsi)$ is obtained in Proposition \ref{minexist}.
There exists a constant $C>0$ such that for all $h\in \Tt^d$ and $\epsi>0$, 
\begin{equation*}
B^\epsi(h) \leq B^\epsi(0)+C|h|^2 .
\end{equation*}
\end{lemma}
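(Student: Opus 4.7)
The strategy is to exploit the translation invariance of the torus together with the minimality of $(m^\epsi, w^\epsi)$ in \eqref{Bmin} to obtain a lower bound of the form $B^\epsi(\pm h) \geq B^\epsi(0)$, and then to convert this lower bound into the desired upper bound via a second-order Taylor expansion using the one-sided Hessian estimates \eqref{H^*xx}, \eqref{Fxx}.

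\textbf{Step 1 (translation and minimality).} For $h \in \Tt^d$, define $\tilde m(x) := m^\epsi(x-h)$ and $\tilde w(x) := w^\epsi(x-h)$. Since translation commutes with the distributional divergence and since $\int \tilde m\,dx = \int m^\epsi\,dx = 1$, one checks immediately that $(\tilde m, \tilde w) \in K_\epsi$. A change of variables $y = x-h$ in the definition of $\mathcal{B}$ gives
\[
\mathcal{B}(\tilde m, \tilde w) = \int_{\Tt^d} m^\epsi(y)\,H^*\!\left(y+h,\tfrac{-w^\epsi(y)}{m^\epsi(y)}\right) + F(y+h, m^\epsi(y))\,dy = B^\epsi(h).
\]
By the minimality of $(m^\epsi, w^\epsi)$ we obtain $B^\epsi(h) \geq \mathcal{B}(m^\epsi,w^\epsi) = B^\epsi(0)$, and the same argument with $-h$ in place of $h$ yields $B^\epsi(-h) \geq B^\epsi(0)$.

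\textbf{Step 2 (second-order Taylor bound).} Writing $p^\epsi := -w^\epsi/m^\epsi$ on $\{m^\epsi>0\}$, Taylor's theorem with integral remainder applied to $t \mapsto H^*(x+th, p^\epsi(x))$ and $t \mapsto F(x+th, m^\epsi(x))$ gives
\begin{align*}
H^*(x+h,p^\epsi) + H^*(x-h,p^\epsi) - 2H^*(x,p^\epsi) &= \int_0^1 (1-t)\,h^\top \bigl[D^2_x H^*(x+th,p^\epsi) + D^2_x H^*(x-th,p^\epsi)\bigr] h\,dt,
\end{align*}
and analogously for $F$. Invoking \eqref{H^*xx} and \eqref{Fxx} and multiplying by $m^\epsi$, the sum $B^\epsi(h) + B^\epsi(-h) - 2B^\epsi(0)$ is bounded above by
\[
C |h|^2 \int_{\Tt^d} \left( m^\epsi \left|\tfrac{w^\epsi}{m^\epsi}\right|^{r'} + m^\epsi + |m^\epsi|^q + 1\right) dx.
\]

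\textbf{Step 3 (uniform bound on the integral).} By Proposition \ref{compact}, $\|m^\epsi\|_{L^q(\Tt^d)}$ is bounded uniformly in $\epsi$. Using the lower bound $H^*(x,q) \geq \tfrac{1}{C}|q|^{r'}-C$ from \eqref{Hstar}, one has $\int m^\epsi|w^\epsi/m^\epsi|^{r'}\,dx \leq C(\mathcal{B}(m^\epsi,w^\epsi)+1)$, and the right-hand side is uniformly bounded thanks to Lemma \ref{limsupB} (the minimum of $\mathcal{B}$ on $K_\epsi$ converges to a finite limit as $\epsi\to 0$). Hence the whole integrand is in $L^1$ with a uniform bound, and we get
\[
B^\epsi(h) + B^\epsi(-h) - 2B^\epsi(0) \leq C|h|^2
\]
with $C$ independent of $\epsi$.

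\textbf{Step 4 (conclusion).} Combining Step 1 (which gives $B^\epsi(-h) - B^\epsi(0) \geq 0$) with Step 3,
\[
B^\epsi(h) - B^\epsi(0) \leq \bigl[B^\epsi(h) + B^\epsi(-h) - 2 B^\epsi(0)\bigr] - \bigl[B^\epsi(-h) - B^\epsi(0)\bigr] \leq C|h|^2,
\]
which is the claim. The main point likely to require care is Step 3, namely verifying the $\epsi$-independent integrability of the Taylor remainder, i.e.\ the uniform $L^1$ bound on $m^\epsi|w^\epsi/m^\epsi|^{r'}$, which in turn rests on Lemma \ref{limsupB}.
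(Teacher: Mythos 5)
Your skeleton matches the paper's: translation invariance on the torus plus minimality of $(m^\epsi,w^\epsi)$ for \eqref{Bmin} gives $B^\epsi(\pm h)\geq B^\epsi(0)$, and a second-order Taylor expansion controlled by the one-sided Hessian bounds \eqref{H^*xx}, \eqref{Fxx} produces the quadratic upper bound. Your Step 2 is a genuine (and nice) variant of the paper's argument: the paper first shows $B^\epsi\in C^1(\Tt^d)$, deduces $DB^\epsi(0)=0$ from the minimality, and then bounds the second-order remainder, whereas your centered second difference $B^\epsi(h)+B^\epsi(-h)-2B^\epsi(0)$ cancels the first-order terms identically, so you never need to differentiate $B^\epsi$ at all; this is slightly more robust (one-sided Hessian bounds are all you use) and, combined with $B^\epsi(-h)\geq B^\epsi(0)$ in Step 4, gives the same conclusion.

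There is, however, a genuine flaw in Step 3. You justify the $\epsi$-uniform bound on $\mathcal{B}(m^\epsi,w^\epsi)=\min_{K_\epsi}\mathcal{B}$ by invoking Lemma \ref{limsupB}. This fails on two counts. First, Lemma \ref{limsupB} requires the hypothesis ``$q\geq d$, or $r'\leq \frac{qd}{d-q}$ if $q<d$,'' which is not among the assumptions of the Appendix statement you are proving; you would be smuggling in an extra hypothesis. Second, Lemma \ref{limsupB} is only a $\limsup_{\epsi\to 0}$ statement: at best it bounds $\min_{K_\epsi}\mathcal{B}$ for all sufficiently small $\epsi$, while the lemma asserts a constant valid for every $\epsi>0$. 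Both defects vanish if you argue as the paper does: the pair $(1,0)$ lies in $K_\epsi$ for every $\epsi>0$ (indeed $\epsi\cdot 1+\div(0)=\epsi$), hence
\[
\mathcal{B}(m^\epsi,w^\epsi)\leq \mathcal{B}(1,0)=\int_{\Tt^d} H^*(x,0)+F(x,1)\,dx<+\infty,
\]
a fixed constant independent of $\epsi$. Combined with the coercivity of $F$ and $H^*$ exactly as in \eqref{mqesti}, this yields directly the uniform bounds
\[
\int_{\Tt^d} m^\epsi \left|\frac{w^\epsi}{m^\epsi}\right|^{r'}dx\leq C,
\qquad \|m^\epsi\|^q_{L^q(\Tt^d)}\leq C,
\]
which is all your Step 3 needs. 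With that one-line replacement, your proof is complete and correct.
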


\begin{proof}
In the proof, we write $f_h(x):= f(x-h)$ for all $h\in \Tt^d$ and $f:\Tt^d \to \Rr$. 
We first notice that $B^\epsi: \Tt^d \to \Rr$ takes a minimum at $h=0$. 
Indeed, since $(m^\epsi_h,w^\epsi_h) \in K_\epsi$, we have 
\begin{equation}\label{DBh} 
B^\epsi(h)=\mathcal{B} (m^\epsi_h,w^\epsi_h) \geq \mathcal{B} (m^\epsi,w^\epsi) =B^\epsi(0).
\end{equation}

Let $y \in \Tt^d$ and $h=(h_1,\ldots,h_d)\in \Tt^d$. By Taylor's expansion, there exists a constant $\theta \in (0,1)$ such that
\begin{align*}
&B^\epsi(y+h)-B^\epsi(y)\\
&=\int_{\Tt^d} m^\epsi  D_x H^* (x+y, -\frac{w^\epsi}{m^\epsi})\cdot h+D_xF(x+y,m^\epsi)\cdot h  \mbox{ }dx\\
&\quad \:\: +\frac{1}{2}\int_{\Tt^d} \sum_{i,j=1}^d \left\{ m^\epsi \frac{\partial^2 H^*}{\partial x_i \partial x_j}(x+y+\theta h, -\frac{w^\epsi}{m^\epsi})+\frac{\partial^2 F}{\partial x_i \partial x_j} (x+y+\theta h,m^\epsi) \right\} h_i h_j \mbox{ }dx.
\end{align*}
It follows that $B^\epsi \in C^1(\Tt^d)$ with 
\[ DB^\epsi(y)=\int_{\Tt^d} m^\epsi D_xH^*(x+y,-\frac{w^\epsi}{m^\epsi})+D_xF(x+y,m^\epsi)\mbox{ }dx.\]
In view of \eqref{DBh}, $DB^\epsi(0)=0$. Hence, it follows from  \eqref{H^*xx} and \eqref{Fxx} that 
\begin{align}\label{Taylor}
&B^\epsi(h)-B^\epsi(0) \notag \\
&= \frac{1}{2}\int_{\Tt^d} \sum_{i,j=1}^d \left\{ m^\epsi \frac{\partial^2 H^*}{\partial x_i \partial x_j}(x+y+\theta h, -\frac{w^\epsi}{m^\epsi})+\frac{\partial^2 F}{\partial x_i \partial x_j} (x+y+\theta h,m^\epsi) \right\} h_i h_j \mbox{ }dx \notag \\
&\leq \frac{C|h|^2}{2} \int_{\Tt^d} m^\epsi \left|\frac{w^\epsi}{m^\epsi} \right|^{r'}\mbox{ }dx+  |m^\epsi|^q+1 \mbox{ }dx. 
\end{align}
Noting that $\|m^\epsi\|_{L^q(\Tt^d)}$ is bounded uniformly in $\epsi$ by Proposition \ref{compact}, and as in estimate \eqref{mqesti}, we have 
\begin{equation*}
 \int_{\Tt^d}m^\epsi \left|\frac{w^\epsi}{m^\epsi}\right|^{r'} \mbox{ }dx \leq C\left( \mathcal{B}(1,0)+1\right).
\end{equation*}
Hence,  combining the above with \eqref{Taylor}, we have  \eqref{Bhesti}.
\end{proof}

\begin{lemma}\label{Jesit} 
We have 
\begin{equation*}
\mathcal{A}^\epsi(u)+\mathcal{B}(m,w)\geq c_0 \int_{\Tt^d} |J(m)-\tilde{J}(\epsi u+H(x,Du))|^2 dx
\end{equation*}
for all $u\in W^{1,pr}(\Tt^d)$, and $(m,w) \in K_\epsi$.

\end{lemma}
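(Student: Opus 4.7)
The plan is to apply the hypothesis \eqref{hypJ} pointwise with the specific choice $a = \epsi u(x) + H(x, Du(x))$, integrate, and then show that the cross-terms that remain assemble into a non-negative quantity via the Fenchel--Young inequality for $H$ together with the weak formulation of the constraint $(m,w) \in K_\epsi$.

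More precisely, the first step is to apply \eqref{hypJ} with $m = m(x)$ and $a = \epsi u(x) + H(x, Du(x))$ and integrate over $\Tt^d$; after rearranging terms so that the $F^\ast$-contribution combines with $-\epsi u$ into $\mathcal{A}^\epsi(u)$ and the $F$-contribution combines with $mH^\ast(x, -w/m)$ into $\mathcal{B}(m,w)$, this yields
\begin{align*}
\mathcal{A}^\epsi(u) + \mathcal{B}(m,w) \geq {}& c_0 \int_{\Tt^d} |J(m) - \tilde{J}(\epsi u + H(x,Du))|^2 \, dx \\
& + \int_{\Tt^d} \bigl\{ m H(x, Du) + m H^\ast(x, -w/m) + \epsi u (m - 1) \bigr\} \, dx.
\end{align*}
The task is to show that the last integral is non-negative.

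The second step is a Fenchel--Young inequality: for $m > 0$,
\[
H(x, Du) + H^\ast\bigl(x, -w/m\bigr) \geq -(w/m) \cdot Du,
\]
so $m H(x,Du) + m H^\ast(x, -w/m) \geq -w \cdot Du$ a.e. on $\{m > 0\}$. On $\{m = 0\}$ one has $w = 0$ (else $\mathcal{B}(m,w) = +\infty$ and the statement is vacuous), and the inequality is trivial. Hence the leftover integral dominates $\int_{\Tt^d} \{\epsi u(m-1) - w\cdot Du\}\, dx$.

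The third and final step is to identify this quantity with zero via the distributional constraint \eqref{DPFP}. The only subtlety is that \eqref{DPFP} is stated for test functions $\psi \in C^1(\Tt^d)$, whereas we want to plug in $\psi = u \in W^{1,pr}(\Tt^d)$; this is the main (but mild) obstacle. It is handled by the density of $C^1$ in $W^{1,pr}$, together with the observation that both sides of \eqref{DPFP} are continuous on $W^{1,pr}$: indeed, a direct calculation from $p = q/(q-1)$ and $r' = r/(r-1)$ gives $r'q/(r'+q-1) = pr/(pr-1) = (pr)'$, so $w \in (L^{pr}(\Tt^d))'$, while $m \in L^q(\Tt^d) \hookrightarrow L^{(pr)'}(\Tt^d)$. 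Approximating $u$ by $C^1$ functions in $W^{1,pr}$ and passing to the limit in \eqref{DPFP} yields $\int_{\Tt^d} \{\epsi u(m-1) - w\cdot Du\}\, dx = 0$, completing the proof.
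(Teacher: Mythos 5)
Your proof is correct and follows essentially the same route as the paper's: apply \eqref{hypJ} with $a=\epsi u+H(x,Du)$, use the Fenchel--Young inequality $mH(x,Du)+mH^*(x,-w/m)\geq -w\cdot Du$, and kill the remaining integral using the constraint \eqref{DPFP}. Your extra care with the $\{m=0\}$ case and with justifying $\psi=u$ as a test function (via density of $C^1$ in $W^{1,pr}$ and $w\in(L^{pr}(\Tt^d))'$) only makes explicit details the paper leaves implicit, the latter being recorded in the proof of Proposition \ref{DPchar}.
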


\begin{proof}
Using \eqref{hypJ}, and the convexity of $H^\ast(x,\cdot)$, we have
\begin{align*}
&\mathcal{A}^\epsi(u)+\mathcal{B}(m,w)\\
\geq& 
c_0 \int_{\Tt^d} |J(m)-\tilde{J}(\epsi u+H(x,Du))|^2 dx \\
&+ \int_{\Tt^d} m\left(H(x,Du)+H^*\left(x, -\frac{w}{m}\right)\right)
-\epsi u+\epsi u m \, dx\\
\geq& 
c_0 \int_{\Tt^d} |J(m)-\tilde{J}(\epsi u+H(x,Du))|^2 \,dx 
+ \int_{\Tt^d} -Du\cdot w-\epsi u+\epsi u m \,dx\\
=& 
c_0 \int_{\Tt^d} |J(m)-\tilde{J}(\epsi u+H(x,Du))|^2 \,dx. 
\qedhere   
\end{align*}
\end{proof}

\begin{proof}[Proof of Proposition {\rm\ref{H1esti}}]
Let $u^\epsi$ be a minimizer of \eqref{Aepsimin}. 
Using Lemmas \ref{Jesit}, \ref{Bhesti} and Proposition \ref{prop:dual}, we get 
\begin{align*}
&\frac{1}{2}\|J(m^\epsi)-J(m^\epsi_h)\|^2_{L^2(\Tt^d)}\\
&\leq \|J(m^\epsi)-\tilde{J}(\epsi u^\epsi +H(x,Du^\epsi))\|^2_{L^2} +\|\tilde{J}(\epsi u^\epsi +H(x,Du^\epsi))-J(m^\epsi_h)\|^2_{L^2}\\
&\leq 
\frac{1}{c_0}\{\mathcal{A}^\epsi(u^\epsi)+\mathcal{B}(m^\epsi,w^\epsi)\}+\frac{1}{c_0}\{\mathcal{A}^\epsi(u^\epsi)+\mathcal{B}(m^\epsi_h,w^\epsi_h)\}
\\
&=\frac{1}{c_0}\{-\mathcal{B}(m^\epsi,w^\epsi)+\mathcal{B}(m^\epsi_h,w^\epsi_h)\}
 = \frac{1}{c_0}\{B(h)-B(0)\}\leq C|h|^2,
\end{align*}
which gives the uniform boundness of $J(m^\epsi)$ in $H^1(\Tt^d)$.
\end{proof}

\bigskip
\noindent
\textbf{Acknowledgement. }
The authors would like to thank Hung V. Tran and Alp\'{a}r R. M\'{e}sz\'{a}ros for helpful comments and suggestions. 


\def\cprime{$'$}

\end{document}